\DeclareRobustCommand{\noopsort}[1]{}
\numberwithin{equation}{section}
\newtheorem{theorem}{Theorem}[section]
\newtheorem{proposition}[theorem]{Proposition}
\newtheorem{corollary}[theorem]{Corollary}
\newtheorem{lemma}[theorem]{Lemma}
\newtheorem{fact}[theorem]{Fact}
\newtheorem{question}[theorem]{Question}
\newcounter{introthmcounter}
\newtheorem{thmA}{Theorem}[introthmcounter]
\theoremstyle{definition}
\newtheorem{definition}[theorem]{Definition}
\newtheorem{notation}[theorem]{Notation}
\newtheorem{example}[theorem]{Example}
\newtheorem{context}[theorem]{Context}
\theoremstyle{remark}
\newtheorem{remark}[theorem]{Remark}
\newcommand{\cA}{\mathcal{A}}
\newcommand{\bE}{\mathbb E}
\newcommand{\bQ}{\mathbb Q}
\newcommand{\bR}{\mathbb R}
\newcommand{\bN}{\mathbb N}
\newcommand{\bK}{\mathbb K}
\newcommand{\bZ}{\mathbb Z}
\newcommand{\Var}{\mathrm{Var}}
\newcommand{\RCF}{\mathrm{RCF}}
\DeclareMathOperator{\Supp}{\mathrm{Supp}}
\DeclareMathOperator{\Hom}{\mathrm{Hom}}
\DeclareMathOperator{\sign}{\mathrm{sgn}} 
\DeclareMathOperator{\std}{\mathrm{st}}
\DeclareMathOperator{\Th}{\mathrm{Th}}
\DeclareMathOperator{\End}{\mathrm{End}}
\DeclareMathOperator{\id}{\mathrm{id}}
\DeclareMathOperator{\RExp}{\mathrm{RExp}}
\newcommand{\bU}{\mathbb{U}}
\newcommand{\cQ}{\mathcal{Q}}
\newcommand{\cS}{\mathcal{S}}
\def\lc{\mathrm{lc}} 
\def\Lhp{(\!(} 
\def\Rhp{)\!)} 
\def\op{\mathrm{op}}
\def\cB{\mathcal{B}}
\def\cP{\mathcal{P}}
\def\cF{\mathcal{F}}
\def\cG{\mathcal{G}}
\def\cC{\mathcal{C}}
\def\cH{\mathcal{H}}
\def\cR{\mathcal{R}}
\def\cL{\mathcal{L}}
\newcommand{\cT}{\mathcal{T}}
\def\-{\text{-}}
\newcommand{\No}{\mathbf{No}}
\newcommand{\bT}{\mathbb{T}}
\newcommand{\tp}{\mathrm{tp}}
\newcommand{\dcl}{\mathrm{dcl}}
\newcommand{\convex}{\mathrm{convex}}
\newcommand{\tame}{\mathrm{tame}}
\newcommand{\var}[1]{\mathrm{#1}}
\newcommand{\val}{\mathbf{v}}
\newcommand{\res}{\mathbf{r}}
\newcommand{\rv}{\mathbf{rv}}
\newcommand{\CH}{\mathrm{CH}}
\newcommand{\cO}{\mathcal{O}}
\newcommand{\cE}{\mathcal{E}}
\newcommand{\co}{\mathfrak{o}}
\newcommand{\fM}{\mathfrak{M}}
\newcommand{\fN}{\mathfrak{N}}
\newcommand{\fm}{\mathfrak{m}}
\newcommand{\fp}{\mathfrak{p}}
\newcommand{\fn}{\mathfrak{n}}
\newcommand{\Exponents}{\mathrm{Exponents}}
\DeclareMathOperator{\Span}{\mathrm{Span}}
\DeclareMathOperator{\Pos}{\mathrm{Pos}}
\DeclareMathOperator{\HSder}{\mathrm{H\!S}}
\title[Truncations and the Mourgues-Ressayre construction]{Truncations in languages of generalized power series and the structure of $T$-$\lambda$-spherical completions of o-minimal fields.}
\author{Pietro Freni}
\subjclass[2020]{Primary 16W60, 03C64; Secondary 12J15, 06F20, 06F25.}
\address{Institute of Mathematics, Czech Academy of Sciences, Žitná 25, 110 00 Praha 1, Czech Republic}
\keywords{exponential o-minimal field; generalized power series; Hahn field; power-bounded o-minimal field; o-minimal field; o-minimality; transseries; truncation-closed; serial power-bounded structure.}
\thanks{This work has been partially supported by the Czech Science Foundation grant 26-22545I, by the Czech Academy of Sciences CAS (RVO 67985840), and by a Scholarship of the School of Mathematics of the University of Leeds.}
\begin{document}
	
\begin{abstract}
	Let $T$ be the theory of an o-minimal field and $T_0$ a common reduct of $T$ and $T_{an}$.
		
	I adapt Mourgues' and Ressayre's constructions to deduce structure results for $T_0$-reducts of $T$-$\lambda$-spherical completion of models of $T_\convex$.
		
	These in particular entail that whenever $T$ is the theory of a reduct of $\bR_{an,\exp}$ defining the exponentiation (e.g.\ $T=T_{\exp}$, the theory of the field of reals expanded by the exponential function), every model of $T$ has an initial elementary embedding in $\No$. This answers positively an open question in \cite{ehrlich2021surreal}.
		
	The main technical result is that expanding an integral domain of generalized series in the sense of Hahn-Higman-Ribenboim (such as a Hahn field) by a family of generalized power series interpreted as functions defined on certain infinitesimal elements, has the property that truncation closed subsets generate truncation closed substructures, provided that the family of generalized power series is itself closed under truncations and partial derivatives. It is also shown that the further closure of the generated set under solutions to certain equations is as well closed under truncations.
    
    The formal results on power series leave room for possible generalizations to the case in which $T_0$ is power bounded but not necessarily a reduct of $T_{an}$.
\end{abstract}
	
\maketitle
\tableofcontents
	
\section{Introduction}
		
\subsection{Motivation and background}

Recall that given a ring $\bK$ and a partially ordered monoid $(\fM, \cdot, <)$ the ring of generalized series à la Hahn-Higman-Ribenboim, $\bK\Lhp \fM\Rhp$ consists of the formal series $f:=\sum_{\fm} k_\fm \fm$ whose support $\Supp(f):=\{\fm \in \fM : k_\fm \neq 0\}$ is Noetherian (i.e.\ contains no infinite antichain and no infinite ascending chain). The field operations are given by the term-wise sum and Cauchy's product formula.
		
A distinctive feature of these objects is that they have an extra notion of \emph{infinite sum} that is useful in applications, for example it allows for a natural interpretation of power series at infinitesimal elements (cf \cite{neumann1949ordered}). 

Subrings of $\bK \Lhp \fM \Rhp$ are usually referred to as \emph{rings of generalized series} and can inherit the extra \emph{summability structure} of the ambient $\bK \Lhp \fM \Rhp$.

When $\bK$ is an ordered field and $\fM$ is a totally ordered group, $\bK\Lhp \fM \Rhp$ is a field, called a Hahn field and is itself naturally ordered stipulating that a series is positive if its leading coefficient is positive.

Fields of generalized series played an important role in the study of o-minimal fields (i.e.\ o-minimal expansions of ordered fields), see for example \cite{Ressayre1993Integer}, \cite{dries1994elementary} or, more recently, \cite{rolin2024transasymptotic}.
		
The study of their relation with o-minimal fields follows several lines of investigation. The ones that will be addressed in this paper can be broadly synthesized in the following two loose questions:
\begin{enumerate}[label=(Q\arabic*), ref=(Q\arabic*)]
	\item\label{loosequestion1} Let $\bK$ be an o-minimal field, and $\bE \subseteq \bK \Lhp \fM \Rhp$ some field of generalized series containing $\bK$, when does $\bE$ admit an expansion to an elementary extension of $\bK$?
	\item\label{loosequestion2} Let $\bK$ be an o-minimal structure, and let $\bE \subseteq \bK \Lhp \fM \Rhp$ be given a structure of elementary extension of $\bK$, what elementary extensions of $\bK$ admit \emph{truncation closed} elementary embeddings into $\bE$?
\end{enumerate}
		
By a truncation of $f \in \bK \Lhp \fM \Rhp$ we mean an element of the form $f|\fm:= \sum_{\fn>\fm} k_\fn \fn$ and \emph{truncation closed} means closed under taking truncations of the elements.
		
The fields of generalized series $\bE \subseteq \bK \Lhp \fM \Rhp$ for which the inherited summability structure is more relevant are obtained by restricting the family of allowed supports in the definition of Hahn field from all well ordered sets to some suitable ideal $B$ of subsets of $\fM$ (cf \cite{berarducci2021value}, \cite{krapp2022rayner}), such subfields will be denoted by $\bK \Lhp \fM\Rhp_B$. These are the fields for which also the two questions above seem more relevant.
		
If $B$ consists of the well ordered subsets with cardinality strictly less then some uncountable cardinal $\lambda$, $\bK\Lhp \fM \Rhp_B$ is denoted by $\bK \Lhp \fM \Rhp_\lambda$ and is called a \emph{$\lambda$-bounded} Hahn field.
		
\smallskip
		
Question \ref{loosequestion1} is known to have positive answer in the case $\bE = \bK \Lhp \fM \Rhp$ and $\bK$ is power-bounded whenever $\fM$ is a vector space over the exponents of $\bK$ (this is a consequence of the so called residue-valuation property, see \cite[Sec.s~9 and 10]{dries2000field}, \cite[Ch.~12 and 13]{tyne2003t} and \cite[]{kaplan2023t}, or \cite[Sec.~4]{freni2024t} for a self-contained treatment).
		
On the contrary, if $\bK$ is exponential, \cite[Thm.~1]{kuhlmann1997exponentiation}, entails that no field of the form $\bK \Lhp \fM \Rhp$ admits an expansion to an elementary extension of $\bK$.
		
It is also known that for suitably constructed pairs $(\fM, B)$, $\bR \Lhp \fM \Rhp_B$ can naturally be expanded to elementary extension of $\bR_{an, \exp}$: most notable examples are the field of LE and EL transexponential series and certain constructions of $\lambda$-bounded Hahn fields (cf \cite{dries2001logarithmic}, \cite{kuhlmann2005kappa}). 
		 
Such naturally constructed extensions often enjoy the following two properties of compatibility of the exponential with the ``serial'' structure: 
\begin{enumerate}[label = \textbf{(D)}, ref=(D), align= left, leftmargin = *, itemindent = !]\label{Intro:D&T4}
	\item\label{Intro:D} $\log(\fM) \subseteq \bK \Lhp \fM^{>1} \Rhp_B$ (cf \cite{Ressayre1993Integer})
\end{enumerate}
\begin{enumerate}[label = \textbf{(T4)}, ref=(T4), align= left, leftmargin = *, itemindent = !]
	\item\label{Intro:T4} for every sequence of monomials $(\fm_n)_{n \in \bN}$ with $\fm_{n+1} \in \Supp(\log \fm_n)$ for all $n \in \bN$, there is $N\in \bN$ such that for all $n\ge N$, $\log \fm_n = c_n \pm\fm_{n+1}$ for some $c_n$ with $\Supp c_n>\fm_{n+1}$ (cf \cite{schmeling2001corps}).
\end{enumerate}

In that regard \ref{loosequestion1} can be further specialized to the question of whether such structure can be chosen so that it satisfies this two extra conditions.
		
\smallskip
		 
Answers to the \ref{loosequestion2} have instead been given in \cite{mourgues1993every} for real closed fields and in \cite{fornasiero2013initial} and \cite{ehrlich2021surreal} respectively for $\bK=\bR_{an, \exp}$ and $\bK=\bR_{W, \exp}$ with $W$ a convergent Weierstrass system and $\bK \Lhp \fM \Rhp_B= \No$ the (class-sized) field of surreal numbers with its natural $(W,\exp)$ structure.
		
More recently, in \cite{rolin2024transasymptotic}, Rolin, Servi, and Speissegger, obtain related results for certain Generalized Quasianalytic Algebras as defined in \cite{rolin2015quantifier}. Specifically, for $\cA=an^*$ or $\cA$ a truncation closed and natural GQA containing the restricted exponential (see \cite[Def.~3.4]{rolin2024transasymptotic}), they explicitly construct a truncation-closed ordered differential field embedding of the Hardy field $\cH(\bR_{\cA, \exp})$ of the o-minimal structure $\bR_{\cA, \exp}$, in the field $\bT$ of transseries. Such embedding is also an $L_{\cA, \exp}$-embedding if $\bT$ is given a suitable natural $L_{\cA}$-structure (\cite[Sec.~4.2]{rolin2024transasymptotic}).
		
\smallskip
		
In \cite[Thm.~B]{freni2024t}, the author showed that every $T$-convexly valued o-minimal field, admits for every cardinal $\lambda$ a so-called \emph{$T$-$\lambda$-spherical completion}, that is, a unique-up-to-non-uinque-isomorphism elementary extension that is prime (i.e.\ weakly initial) among all the $\lambda$-spherically complete elementary extensions, and that such completion preserves the residue field.
		
This provides leverage toward further partial answers to \ref{loosequestion1}: it is not hard to see that given any $T$-convexly valued $(\bE, \cO) \models T_\convex$, for $\lambda$ large enough, the real closed reduct of a $T$-$\lambda$-spherical completion has the form $\bK \Lhp \fM \Rhp_\lambda$ with $\bK$ the residue field of $(\bE, \cO)$ and $\fM$ some ordered group.
		
The present paper aims at providing further partial answers towards the two line of investigation \ref{loosequestion1} and \ref{loosequestion2}. In that regard we will obtain, as corollaries of Theorem~\ref{introthm:C} below, that in the case $T$ defines exponentiation:
		
\begin{enumerate}[label=(C\arabic*), ref=(C\arabic*)]
	\item\label{intro:C1} for every large enough cardinal $\lambda$, the reduct of the $T$-$\lambda$-spherical completion of $(\bE, \cO)$ to the language of valued exponential field is isomorphic to a field of the form $\bK \Lhp \fM \Rhp_\lambda$ endowed with an exponential satisfying (D) and (T4);
	\item\label{intro:C2} if $T$ is the theory of a reduct of $\bR_{an, \exp}$ defining $\exp$ and $+$, then every model of $T$ admits initial elementary embeddings into the field $\No$ of surreal numbers. 
\end{enumerate}

Along the way of proving Theorem~\ref{introthm:C}, we will lay out some basic theory of truncations in generalized power series and how they behave with respect to compositions and compositional inversion (Theorems~\ref{introthm:A}, \ref{introthm:B}, Proposition~\ref{prop:main_for_bN}), and do so in a generality that certainly is not needed in the present paper but might be of interest per se or in future applications given that recently the notion of truncations sparked some new interest (cf \cite{dries2025truncation} and \cite{rolin2024transasymptotic}).
		
\subsection{Setting and Main Results}

Let $\bK$ be a power-bounded o-minimal structure with field of exponents $\Lambda\subseteq \bK$ and let $\fM$ be a multiplicatively written ordered $\Lambda$-vector space.
Recall that an $n$-varied generalized power series with coefficients from $\bK$ and exponents of $\Lambda$ is an element of $\bK \Lhp \var{x}^\Lambda\Rhp$ where $\var{x}^\Lambda$ is the multiplicatively written free $\Lambda$-vector space generated by some set of $n$ distinct variables $\var{x}_0, \ldots, \var{x}_{n-1}$, partially ordered by stipulating that $\var{x}_i<1$ for all $i<n$. More explicitly an $f \in \bK \Lhp \var{x}^\Lambda\Rhp$ is a formal expression of the form 
\[f(\var{x}) = \sum_{\gamma \in \Lambda^n} c_\gamma \var{x}^\gamma =\sum_{\gamma\in \Lambda^n} c_\gamma \prod_{i<n} \var{x}_i^{\gamma_i}
\]
where $(c_\gamma)_{\gamma \in\Lambda^n} \in \bK^\Lambda$, such that $\{\gamma: c_\gamma \neq 0\}$ is a well-partial order in $\Lambda^n$ with the product partial order.
Each such $f$ can be interpreted as a function on the positive infinitesimals of the Hahn field $\bK \Lhp \fM \Rhp$, by defining its value at tuples of the form $(k_i\fm_i)_{i<n}$ with $k=(k_i)_{i<n} \in (\bK^{>0})^n$ and $\fm=(\fm_i)_{i<n} \in (\fM^{\prec 1})^n$
as the formal sum $\sum_{\gamma} c_\gamma k^\gamma \fm^{\gamma}$ and extending its evaluation to any $n$-tuple $(x_i)_{i<n}$ of positive infinitesimals, by writing it as $x_i=\fm_i(k_i+\varepsilon_i)$ where $\varepsilon_i$ is infinitesimal, $k_i \in \bK^{>0}$ and $\fm_i\in \fM^{\prec 1}$ and evaluating $f$ by using a formal Taylor expansion at $\fm_i k_i$
\[f(x)=\sum_{\gamma}\sum_{m \in\bN^n} \prod_{i<m}\frac{(\varepsilon_i\fm_i)^{m_i}}{m_i!} \cdot (\partial_{\var{x}_{0}}^{m_0} \cdots \partial_{\var{x}_{n-1}}^{m_{n-1}} f)(k_0 \fm_0, \ldots, k_{n-1} \fm_{n-1}),\]
where $\partial_{\var{x}_i}f$ is the formal derivative in the variable $\var{x}_i$, defined on monomials by $\partial_{\var{x}_i}\var{x}_j=0$ for $j\neq i$, $\partial_{\var{x}_i}\var{x}_i^{\gamma_i}=\gamma_i \var{x}_i^{\gamma_i-1}$ and extended so as to be $\bK$-linear and to preserve infinite sums.

The first main result, on which Theorem~\ref{introthm:C} relies is a purely formal fact about Hahn fields expanded with a set $\cF$ of such series. Namely:
			
\begin{thmA}[\ref{cor:mainT}]\label{introthm:A}
	If the family $\cF$ is closed under taking truncations in every variable and under the operation $f(\var{x}_0, \ldots \var{x}_{n-1}) \mapsto \var{x}_i \partial_{\var{x}_i} f(\var{x}_0, \ldots, \var{x}_{n-1})$, and $X \subseteq \bK \Lhp \fM \Rhp$ is closed under truncations, then the smallest $(\cF, +, \cdot)$-structure generated by $X \cup \bQ$ is closed under truncations.
\end{thmA}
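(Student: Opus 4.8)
The plan is to show that
\[ S \;:=\; \{\, a \in R \;:\; a|\fm \in R \text{ for all } \fm \in \fM \,\} \]
(writing $R$ for the structure in the statement) is itself an $(\cF,+,\cdot)$-substructure of $\bK\Lhp\fM\Rhp$ containing $X\cup\fM\cup\bK$; by minimality of $R$ this forces $S=R$, which is the assertion. Membership of the generators is immediate, since an element of $\bK\cup\fM$ has only itself and $0$ among its truncations and $X$ is truncation closed by hypothesis, and $S$ is closed under $+$ because $(a+b)|\fm=a|\fm+b|\fm$. Since $R$ is a ring and is closed under the $\cF$-operations, what is left is to check that $S$ is closed under $\cdot$ and under $(g_0,\dots,g_{n-1})\mapsto f(g_0,\dots,g_{n-1})$ for $f\in\cF$; equivalently, as $ab$ and $f(g_0,\dots,g_{n-1})$ already lie in $R$, that their truncations lie in $R$ whenever all truncations of $a,b$, resp.\ of the $g_i$, do.

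For the product, fix a monomial $\fm$ and assume $a,b\neq 0$. Discarding from $a$ the monomials $\le\fm\fn_b^{-1}$, where $\fn_b$ is the leading monomial of $b$, does not change $(ab)|\fm$, so with $a_0:=a|(\fm\fn_b^{-1})$ one has $(ab)|\fm=(a_0b)|\fm$. Now iterate: given a nonzero truncation $a_k$ of $a$, the map $\nu\mapsto b|(\fm\nu^{-1})$ on $\Supp a_k$ is weakly decreasing as $\nu$ decreases in $\fM$; since the truncations of $b$, ordered by inclusion of supports, form a well-ordered chain, this map is eventually constant, taking some value $w_k$ -- itself a truncation of $b$, hence in $R$ -- on the nonempty set of monomials $\le\fn_k$ in $\Supp a_k$ for a suitable $\fn_k$. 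Splitting $\Supp a_k$ there gives $(a_kb)|\fm=w_k\,(a_k-a_{k+1})+(a_{k+1}b)|\fm$ with $a_{k+1}:=a_k|\fn_k$, and $w_k(a_k-a_{k+1})\in R$. The support of $a_{k+1}$ is a proper up-subset of $\Supp a_k$, so the descending chain $\Supp a_0\supsetneq\Supp a_1\supsetneq\cdots$ of up-sets of $\Supp a$ cannot continue forever (these up-sets, ordered by inclusion, form a well-ordered chain); hence $a_K=0$ for some $K$ and $(ab)|\fm=\sum_{k<K}w_k(a_k-a_{k+1})\in R$.

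For $f(g_0,\dots,g_{n-1})$ with the $g_i$ positive infinitesimal, fix $\fm$ and write $g_i=c_i\fn_i(1+\eta_i)$ with $c_i\in\bK^{>0}$, $\fn_i\in\fM$ the leading monomial of $g_i$, and $\eta_i$ infinitesimal. The leading term $c_i\fn_i$ is a truncation of $g_i$, hence lies in $S$ and is invertible there ($c_i^{-1}\in\bK$, $\fn_i^{-1}\in\fM$); thus $\eta_i=(g_i-c_i\fn_i)(c_i\fn_i)^{-1}\in S$, and so $\eta_0^{k_0}\cdots\eta_{n-1}^{k_{n-1}}\in S$ by the product case. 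Expanding each $(1+\eta_i)^{\gamma_i}$ by its convergent binomial series and regrouping,
\[ f(g_0,\dots,g_{n-1}) \;=\; \sum_{\vec k\in\bN^n}\eta_0^{k_0}\cdots\eta_{n-1}^{k_{n-1}}\;f_{\vec k}(c_0\fn_0,\dots,c_{n-1}\fn_{n-1}), \qquad f_{\vec k}(\x):=\sum_{\gamma}c_\gamma\Bigl(\textstyle\prod_i\binom{\gamma_i}{k_i}\Bigr)\x^{\gamma}. \]
Each $f_{\vec k}$ is again non-singular, and since $\prod_i\binom{\gamma_i}{k_i}$ is a polynomial in $\gamma$ it is a $\bQ$-linear combination of the series $\prod_i(x_i\partial_{x_i})^{j_i}f$ with $j_i\le k_i$ -- this is exactly where closure of $\cF$ under $x_i\partial_{x_i}$ is used -- so $f_{\vec k}(c_0\fn_0,\dots)$ lies in $S$, by the (easier) case of a monomial argument, which in turn reduces to the truncation-closure of $\cF$. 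Hence every summand of the display lies in $S$. When $\fM$ is archimedean only finitely many $\vec k$ can contribute a monomial $>\fm$, so $f(g_0,\dots,g_{n-1})|\fm$ is a finite sum of truncations of elements of $S$ and lies in $R$; in general I would remove this hypothesis by running a peeling recursion on the indices $\vec k$ -- absorbing the tail of $\vec k$'s on which $f_{\vec k}(c_0\fn_0,\dots)$ has become constant modulo terms $\le\fm$, at the cost of replacing $f$ by a truncation with strictly smaller exponent support -- using the well-partial-order property of $\Supp f$ for termination, together with an induction on the support ranks of the $g_i$ applied to the proper truncations $g_i|\fm$.

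The step I expect to be the genuine obstacle is precisely this last resummation: matching the potentially infinite binomial expansion -- or, over a non-archimedean $\fM$, the genuinely infinite series $f(g_0,\dots,g_{n-1})|\fm$ -- against the requirement that $R$ only absorbs \emph{finite} ring combinations of truncations. I would try to factor out a single combinatorial statement, to the effect that a well-(partially-)ordered set of exponents meets the region below a fixed threshold in a finite union of products of intervals, and to use it uniformly for the product, for evaluating $f$ at monomials, and for the resummation above; getting the several-variable book-keeping to cooperate with the induction on support rank is where the most care will be needed.
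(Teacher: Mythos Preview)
Your setup, the product argument, and the binomial rewriting of $f(g_0,\dots,g_{n-1})$ are all correct and match the paper's approach; in particular you have correctly located where the closure of $\cF$ under $\var{x}_i\partial_{\var{x}_i}$ enters. The combinatorial lemma you anticipate in your last paragraph (a well-partially-ordered set of exponents meets an initial segment in a finite union of boxes) is indeed the content of the paper's ``basic segmentation'' lemma, and it handles the case of monomial arguments exactly as you say.

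The genuine gap is the one you yourself flag: the non-archimedean resummation. Your proposed induction ``on the support ranks of the $g_i$ applied to the proper truncations $g_i|\fm$'' stalls in the very case that makes the archimedean argument fail, namely when every monomial in $\Supp \eta_i$ lies in a strictly larger Archimedean class than $\fm$; then $g_i|\fm = g_i$ is not a proper truncation and no progress is made on the $g$-side. Likewise, ``absorbing the tail of $\vec k$'s on which $f_{\vec k}(c_0\fn_0,\dots)$ has become constant modulo terms $\le\fm$'' does not quite make sense as stated: the \emph{supports} of those elements above $\fm$ are the same for all $\vec k$, but the \emph{coefficients} are multiplied by the varying $\prod_i\binom{\gamma_i}{k_i}$, so there is no tail on which these values agree.

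The missing idea is to bring in the natural (Archimedean) valuation $\val$ on $\fM$. The paper first repackages the problem as one about \emph{restricted} (classical) power series with coefficients in $\bK\Lhp\fM\Rhp$: your display becomes $f(g) = \sum_{\vec k} r_{\vec k}\,\eta^{\vec k}$ with $r_{\vec k} = f_{\vec k}(c_0\fn_0,\dots)\in\bK\Lhp\fM\Rhp$, and one does a double induction on the pair of order types of $S=\bigcup_i\Supp\eta_i$ and of $R=\bigcup_{\vec k}\Supp r_{\vec k}$. Set $v=\val\fm$. If $\val S\not> v$ (Case~1), replace each $\eta_i$ by its truncation $\eta_i\|v$ at the Archimedean class of $\fm$; this is now a \emph{proper} truncation, and the Taylor expansion $f(g)=\sum_h (g-g\|v)^h\,(\partial_h f)(g\|v)/h!$ has only finitely many terms contributing above $\fm$ because $(g-g\|v)^h$ drops below $\fm$ for large $|h|$; the inner $(\partial_h f)(g\|v)$ is handled by the inductive hypothesis on $S$. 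If $\val S> v$ (Case~2), one instead peels a monomial $\fp\in R$ with $\fp\asymp\fm$ from the coefficient side (this is where your ``replace $f$ by a truncation with strictly smaller exponent support'' reappears, but applied to $R\subseteq\fM$ rather than to the exponent support of $f$), reducing the order type of $R$; the remainder lands back in Case~1 for the shifted target $\fm/\fp$. It is this Archimedean-class truncation $g\|v$, rather than $g|\fm$, that makes the $g$-side induction go through.
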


This will come as a corollary of a more general result concerning families of generalized series lying in rings of series of the form $\bK \Lhp \fM \times \var{x}^{\Lambda}\Rhp$ where:
\begin{enumerate}[label=(H\arabic*), ref=(H\arabic*)]
    \item\label{hypothesis:monomials_setting} $R^{\ge 0} \subseteq \Lambda \subseteq R$ for some (possibly partially) ordered ring $R$ such that $(R,+,0)$ has no torsion $R^{> 0}$ contains no zero-divisors, $R/pR$ is a $p$-boolean algebra for all prime numbers $p$, and $\Lambda$ is a subsemiring of $R$; $\var{x}^\Lambda$ is the free $\Lambda$-module on some finite set of variables $\var{x}$ with the minimal $\Lambda$-module order satisfying $\var{x}<1$; $(\fM, \cdot, 1, \le, (-^\gamma)_{\gamma \in \Lambda})$ is a $\Lambda$-submodule of some ordered $R$-module $\fN$, and $\lambda\mapsto \lambda \cdot \fm$ is one-to-one for all $\fm \le 1$; $\fM \times \var{x}^{\Lambda}$ is given the product partial order;
    \item\label{hypothesis:coefficients_setting} $(\bK, +, \cdot, 0, 1)$ is an integral domain equipped with a multiplicative submonoid $\bK^\bullet$, a semiring action of $\Lambda$ on the monoid $(\bK^{\bullet}, \cdot)$ and semiring homomorphism $\iota: \Lambda \to \bK$; furthermore we assume that $1+\Lambda\supseteq \Lambda^{\neq 0}$ or that all elements of $\bK^{\bullet}$ are units of $\bK$.
\end{enumerate}

Given a series $f \in \bK \Lhp \fM \times \var{x}^\bN\var{y}^\Lambda\Rhp$ it is then possible to define its (renormalized) formal derivatives (resp.\ renormalized formal Hasse-Schmidt derivatives) and, in a fashion similar to the one described for the simpler case above, to define the formal substitutions of any of the $\var{x}$-variables with any power series $g\in \bK\Lhp \fM \times \var{z}^\Lambda\Rhp$ with support smaller smaller than $1$, and of any of the $\var{y}$-variables with a series whose support has a maximum element smaller than $1$ and whose leading coefficient is in $\bK^\bullet$.

\begin{thmA}[\ref{thm:mainT}]\label{introthm:B}
    In the setting of \ref{hypothesis:monomials_setting} and \ref{hypothesis:coefficients_setting}. Given any family of series
    \[\cF\subseteq \bigcup\{\bK\Lhp \fM \times \var{x}^\Lambda \Rhp: \var{x}\;\text{finite set of variables}\},\]
    which contains $\bK$, $\fM \times \var{x}^\Lambda$, $\{(k+\var{t})^\lambda: \var{t}\in\Var, k \in \bK^\bullet, \lambda \in \Lambda\}$, and is closed under truncations, under renormalized Hasse-Schmidt derivatives of any order, and under monomial division, the following hold:
    \begin{enumerate}
        \item the closure of $\cF$ under $\cdot, +$, and allowed substitutions is closed under truncations;
        \item the closure of $\cF$ under $\cdot, +$, allowed substitutions, and implicit functions is closed under truncations.
    \end{enumerate}
\end{thmA}

We refer to Definition~\ref{def:rn_formal_derivatives} for the precise definition of the renormalized Hasse-Schmidt formal derivatives, to Definitions~\ref{defn:genps_interpretation} and Remark~\ref{rmk:compositions} for the one of the allowed substitutions, and to Definition~\ref{defn:closure_ppties} for the meaning of closure under implicit functions and under monomial division.
It is worth mentioning here that Theorem~\ref{introthm:B} generalizes several known facts about preservation of the trucnation-closedness property, under various closure opeartions (cf \cite[Thm.~1.2(1) and 1.3]{dries2014truncation}, \cite[Thm~4.15]{fornasiero2006embedding}, \cite[Lem.~3.5]{mourgues1993every}, see Corollaries~\ref{cor:dries_composition} and \ref{cor:dries_fornasiero_composition} below), and that it actually hinges on a more fundamental analogous result (Proposition~\ref{prop:main_for_bN}), which has quite simpler settings and hypotheses.

It is also worth observing that when $\Lambda$ is a ring, the hypothesis of closure under monomial division as well as the one of containing $\{(k+\var{y})^{\lambda}: \var{y} \in \Var, k \in \bK^\bullet, \lambda \in \Lambda\}$ are unnecessary, and that if furthermore $\bK$ is a $\bQ$-algebra, then the closure under renormalized Hasse-Schmidt derivatives amounts to closure under formal derivatives (Remark~\ref{rmk:ring-simplied_hypthoesis}).

\medskip

With the aim of applying Theorem~\ref{introthm:A}, and motivated by the notion of Generalized Quasianalytic Algebra (abbreviated GQA) of \cite{rolin2015quantifier}, we will then consider power bounded structures $\bK$ with a \emph{piecewise Skolem theory}\footnote{by this I mean a universally axiomatized model complete theory, see Definition~\ref{defn:piecewise_Skolem}.}
, given by certain families $L:=\{L(r) : n \in \bN, r \in (\bK^{>0})^n\}$ of algebras $L(r)$ of functions on $\prod_{i<n} (0, r_i)$ that satisfy a relativized analyticity condition. We will suppose furthermore that they are endowed with a family of differential algebra morphisms $\cT:=(\cT_n)_{n \in \bN}$ allowing to represent the germs at the origin of functions in $L(r)$ by generalized power series. This will allow to attempt a natural interpretation of the functions in each $L(r)$ on $\bK \Lhp \fM \Rhp$ for $\fM$ a multiplicatively written ordered $\Lambda$-vector space, by setting for $x$ a tuple in $\bK$ and $\varepsilon$ a tuple of infinitesimal elements in $\bK \Lhp \fM\Rhp$, $f(x+\varepsilon)=\cT(g)(\varepsilon)$ where $\cT(g)$ is the generalized power series representing the germ at the origin of the function $g(\var{z}):=f(x+\var{z})$.

We will denote by $\bK \Lhp \fM\Rhp^\cT$ the expansion of $\bK\Lhp \fM \Rhp$ to an $L$-structure given by the interpretations above and say that $(\bK, \cT)$ is a \emph{serial power bounded structure} if $\bK \Lhp \fM \Rhp^\cT$ is an $L$-elementary extension of $\bK$ for all $\fM$ (see Definition~\ref{def:serial}).
			
The main example of serial structure is $\bR_{an}$, but it is easy to show that all reducts of $\bR_{an}$ can be defintionally expanded to serial structures (Corollary~\ref{cor:anreducts_serial}).
Although the definition of serial power bounded structure is modeled upon the definition of Generalized Quasianalytic Algebra (GQA) in \cite{rolin2015quantifier}, we leave open the question of whether all expansion of the reals by a GQA is interdefinable with a serial structure (\ref{quest:GQAserial}).
			
If the algebras of generalized power series used to interpret these germs are \emph{closed under truncations}, Theorem~\ref{introthm:A} then ensures that each $\bK \Lhp \fM \Rhp^{\cT}$, has the property that whenever $\bK \preceq \bE \preceq \bK \Lhp \fM \Rhp^\cT$ is truncation closed and $x \in \bK \Lhp \fM\Rhp$ has all proper truncations in $\bE$, the definable closure $\bE\langle x \rangle$ of $\bE \cup \{x\}$ is still truncation closed (Proposition~\ref{prop:tc-ppty}).
			
This fact together with results from \cite{freni2024t} allows to redo Mourgues' and Ressayre's constructions in \cite{mourgues1993every} and \cite{Ressayre1993Integer} of truncation closed embeddings (satisfying \ref{Intro:D} and (T4) in the case the models in question are expanded with a compatible exponential).
We will need a minor modification of their construction to take into account the fact that we are aiming in general at embeddings into the $\lambda$-bounded versions of the Hahn fields.

The setting for our final result will be a given \emph{serial} power-bounded $(\bK_L, \cT)$ and an o-minimal expansion $\bK$ of $\bK_L$ in some language $L'\supseteq L$. The theories of $\bK_L$ and $\bK$ are denoted respectively by $T$ and $T'$.

\begin{thmA}[\ref{thm:MRanalogue} and \ref{thm:final}]\label{introthm:C}
	Let $\bE$ be a proper tame extension of $\bK\models T'$, $\lambda$ be a large enough cardinal, and $\bE_\lambda$ be the $T'$-$\lambda$-spherical completion of $(\bE, \CH(\bK))\models T'_\convex$. 

	Then there is a $L$-isomorphism $\eta: \bE_\lambda \to \bK \Lhp \fM \Rhp_\lambda^\cT$ over $\bK_L$, such that $\eta(\bE)$ is truncation-closed. Furthermore if $T'$ defines an exponential, then $\eta$ can be constructed so that $\exp_* := \eta \circ \exp \circ \eta^{-1}$ satisfies \ref{Intro:D} and (T4).
\end{thmA}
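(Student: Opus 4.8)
My plan is to run the transfinite constructions of Mourgues--Ressayre (\cite{mourgues1993every}, \cite{Ressayre1993Integer}) inside the $T'$-convex setting, with definable closure in the reduct $T$ playing the role of real closure and with Proposition~\ref{prop:tc-ppty} -- hence ultimately Theorem~\ref{introthm:A} -- playing the role of the classical observation that adjoining an element all of whose proper truncations are already present keeps a truncation-closed subfield truncation-closed. First I would fix the target. By \cite[Thm.~B]{freni2024t}, for $\lambda$ large enough relative to $|\bE|$ (and, say, regular) the $T'$-$\lambda$-spherical completion $\bE_\lambda$ exists, is unique up to isomorphism over $(\bE,\CH(\bK))$, and preserves the residue field; since $\bE$ is a tame extension of $\bK$ that residue field is $\bK$, so, writing $\fM$ for the value group of $\bE_\lambda$, the residue--valuation property for power-bounded $T$-convex fields together with the serial hypothesis on $(\bK_L,\cT)$ identifies the $L$-reduct of $\bE_\lambda$, over $\bK_L$, with $\bK\Lhp\fM\Rhp_\lambda^\cT$. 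It therefore suffices (this is Theorem~\ref{thm:MRanalogue}) to construct a valuation-preserving, $L$-elementary, truncation-closed embedding $\iota\colon\bE\to\bK\Lhp\fM\Rhp_\lambda^\cT$ over $\bK_L$: transporting the $T'$-structure along $\iota$ makes $\bK\Lhp\fM\Rhp_\lambda^\cT$ a $\lambda$-spherically complete elementary extension of $\iota(\bE)$ with the same residue field and value group, hence the $T'$-$\lambda$-spherical completion of $(\iota(\bE),\cO\cap\iota(\bE))$, so the uniqueness statement of \cite{freni2024t} yields an $L$-isomorphism $\eta\colon\bE_\lambda\to\bK\Lhp\fM\Rhp_\lambda^\cT$ extending $\iota$, with $\eta(\bE)=\iota(\bE)$ truncation-closed; this last bookkeeping is Theorem~\ref{thm:final}.

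To build $\iota$, fix an enumeration $(a_\xi)_{\xi<\mu}$ of $\bE$ and recursively construct an increasing continuous chain of $L$-elementary substructures $A_\xi\preceq_L\bE$ with $A_0=\bK$, $A_{\xi+1}=\dcl_L(A_\xi\cup\{a_\xi\})$ (so $\bigcup_\xi A_\xi=\bE$), together with increasing valuation-preserving $L$-elementary embeddings $\sigma_\xi\colon A_\xi\to\bK\Lhp\fM\Rhp_\lambda^\cT$ over $\bK_L$ with each $\sigma_\xi(A_\xi)$ truncation-closed; then $\iota:=\bigcup_\xi\sigma_\xi$. Limit stages are immediate since truncation-closedness passes to increasing unions. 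At a successor stage, the $L$-type of $a_\xi$ over $A_\xi$ is, by o-minimality of $T$, a cut in the ordered set underlying $\dcl_L(A_\xi)$ of coinitiality and cofinality $<\lambda$; because $\bK\Lhp\fM\Rhp_\lambda^\cT$ is $\lambda$-spherically complete with value group $\fM\supseteq v(A_\xi)$ and residue field $\bK\subseteq A_\xi$, this cut is realised by some $b\in\bK\Lhp\fM\Rhp_\lambda^\cT$, and -- this is the delicate point -- one can moreover arrange that every proper truncation of $b$ lies in $\sigma_\xi(A_\xi)$. Granting such a $b$, one first adjoins its proper truncations one at a time in an inner recursion, each new truncation having all of \emph{its} proper truncations already present, so Proposition~\ref{prop:tc-ppty} -- in the $\lambda$-bounded variant, which is the ``minor modification'' forced by aiming at $\lambda$-bounded Hahn fields -- keeps every image truncation-closed; then one adjoins $b$ itself by a final application of Proposition~\ref{prop:tc-ppty}, and since $b$ and $a_\xi$ realise the same $L$-type over $A_\xi$, $\sigma_\xi$ extends to $\sigma_{\xi+1}$ on $A_{\xi+1}$ with $a_\xi\mapsto b$.

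The simultaneous achievement of ``$b$ realises the right cut'' and ``all proper truncations of $b$ are already present'' is the step I expect to be the main obstacle: here the $\lambda$-spherical completeness of the target and the \emph{inductive} truncation-closedness of $\sigma_\xi(A_\xi)$ have to be played against each other, essentially localising the spherical-completion machinery of \cite{freni2024t}. When the cut of $a_\xi$ is realised by a pseudo-Cauchy sequence from $A_\xi$ one takes $b$ to be a pseudo-limit of the image sequence, and uses that $\sigma_\xi(A_\xi)$ already contains the truncation closure of the relevant partial sums to make this pseudo-limit truncation-friendly; when the cut is valuation-transcendental one locates $v(a_\xi)$ in $\fM$ and takes the corresponding monomial as the leading term of $b$, the residue being available in $\bK\subseteq A_\xi$. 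A cardinality count of the supports occurring across the $\mu$-step recursion shows that staying inside $\bK\Lhp\fM\Rhp_\lambda$ costs nothing once $\lambda$ is large enough.

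For the final clause, when $T'$ defines an exponential I would run the same recursion while respecting $\log$ and $\exp$, following Ressayre (\cite{Ressayre1993Integer}; see also \cite{kuhlmann2005kappa}, \cite{ehrlich2021surreal}): each time a new value is about to be represented by a monomial, choose that monomial within its coset so that its logarithm is purely infinite, which is possible because the constant and infinitesimal parts of the logarithm of a naive representative, together with their exponentials, are already present; iterating this down descending $\log$-chains, which terminate by a rank argument, forces (D) for the monomial group of $\bE_\lambda$ and stabilisation along such chains, i.e. (T4), hence (D) and (T4) for $\exp_*=\eta\circ\exp\circ\eta^{-1}$. Compatibility of these choices with truncation-closedness is where the serial setup does its work: the restricted logarithm lies in the serial family $\cF$, and the closure of $\cF$ under $f\mapsto\var{x}_i\partial_{\var{x}_i}f$ is exactly the hypothesis of Theorem~\ref{introthm:A} (hence of Proposition~\ref{prop:tc-ppty}) that lets the values of these power series be adjoined without leaving the class of truncation-closed substructures.
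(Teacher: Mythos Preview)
Your architecture is right in spirit, but the glue between the truncation-closed embedding $\iota$ of $\bE$ and the claimed isomorphism $\eta$ on $\bE_\lambda$ does not hold as written. You propose to identify the $L$-reduct of $\bE_\lambda$ with $\bK\Lhp\fM\Rhp_\lambda^\cT$ a priori, transport the $T'$-structure, and then invoke the uniqueness of the $T'$-$\lambda$-spherical completion to extend $\iota$ to $\eta$. But that uniqueness (and the weak initiality) from \cite{freni2024t} lives in the category of models of $T'_\convex$: to apply it you would need $\iota$ to be an $L'$-embedding into a model of $T'_\convex$ that is moreover $T'$-$\lambda$-wim-constructible over $\iota(\bE)$. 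Neither is available---$\iota$ is by construction only $L$-elementary, $\bK\Lhp\fM\Rhp_\lambda^\cT$ carries no $T'$-structure, and transporting one along some unrelated $L$-isomorphism gives no reason for $\iota$ to become $L'$-elementary. The a priori identification is moreover essentially the content of the theorem, so invoking it at the outset is circular. The paper avoids this by working entirely with $L$-structures: after building $\iota$ roughly as you describe (via the valuational/immediate dichotomy for principal $T_\convex^-$-extensions, packaged as Lemma~\ref{lem:extension}), it first checks, by a separate cofinality argument over a $\dcl_T$-basis, that the $T'$-wim-construction $\bE\preceq\bE_\lambda$ remains $\lambda$-bounded when viewed as an extension of $T_\convex^-$-models; then it reapplies the \emph{same} Lemma~\ref{lem:extension} to extend $\iota$ along $\bE\to\bE_\lambda$ to an $L$-embedding $\eta$ into $\bK\Lhp\fN\Rhp_\lambda^\cT$ with $\fN$ a copy of the value group of $\bE_\lambda$. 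Surjectivity then follows because $\eta(\bE_\lambda)$ already has value group $\fN$ and is $\lambda$-spherically complete, so the residual extension is immediate, $\lambda$-bounded, hence trivial. No $T'$-structure on the target is ever used.

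A smaller issue in the exponential clause: you write that descending $\log$-chains ``terminate by a rank argument''. They do not, in this generality. The paper's construction (Lemma~\ref{lem:exp_extension}, following Ressayre) explicitly builds an \emph{infinite} sequence $(y_i)_{i<\omega}$ with $\log|y_i|-y_{i+1}$ purely infinite in the current image at each step, adjoins the entire sequence to the monomial group, and then iterates this $\omega$-construction alternated with $\val$-maximal extensions. Property (T4) is precisely the statement that such chains \emph{eventually} take the simple form $\log\fm_n=c_n\pm\fm_{n+1}$, not that they stop; it comes out of the bookkeeping $\Supp(\iota\log|y_i|)\subseteq\iota\fM\cup\{\iota y_{i+1}\}$, not from any well-foundedness.
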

		
As suggested by Mantova, combined with \cite[Thm~8.1]{ehrlich2021surreal}, this can be used to prove \ref{intro:C2} above (Corollary~\ref{cor:initial}).
		
\smallskip

Theorem~\ref{introthm:C} and the above mentioned \cite[Main Theorem]{rolin2024transasymptotic} are related, but differ in several regards. To explain the differences consider the specialization of Theorem~C to the case in which $T'$ is the theory of the expansion $\bR_{0, \exp}$ by the unrestricted exponential, of a \emph{serial} polynomially bounded structure $\bR_0$ over the reals, already defining the restricted exponential. In that case, Theorem~C entails that every elementary extension $\bE \succeq \bR_{0, \exp}$ has a truncation-closed \emph{elementary} embedding (over $\bR_{0,\exp}$) in some elementary extension of the form $\bR_{0, \exp}\Lhp \fM \Rhp_\lambda$ where the functions definable in the structure $\bR_0$ are interpreted in a natural way and the exponential is interpreted in such a way that \ref{Intro:D} and (T4) are satisfied.
		
Note in particular that Theorem~\ref{introthm:C} is conditional to the seriality hypothesis on $\bR_{0}$ (which, for now, we only know to hold in the case $\bR_0$ is a reduct of $T_{an}$), that there is no required compatibility with derivatives (as one could for example require if $(\bE, \CH(\bR))$ is expanded to a $T$-convex $T$-differential field as defined in \cite{kaplan2023t}), and that the group $\fM$ arises from a rather abstract, and not very explicit, completion machinery.
		
On the contrary, \cite[Main Theorem]{rolin2024transasymptotic}, concerns the case $\bE:=\bR_{\cA, \exp}\langle t \rangle$ for some $t>\bR$ and gives an \emph{explicit} construction for a truncation-closed \emph{differential} embedding with respect to the natural ``derivation at $t$'', into the (explicitly constructed) classical field of transseries $\bT$. Moreover it does not require the seriality of $\bR_\cA$, and only requires an extra minor hypothesis (being $an^*$ or being natural) on the GQA $\cA$.
		
\subsection{Structure of the paper} 

The first, longer, Section~\ref{sec:main} deals with the technical results on formal power series, that is, Theorems~\ref{introthm:A} and \ref{introthm:B}.

Subsection~\ref{ssec:wpocombinatorics} is concerned with some basic (mostly known) facts about well partial orders.
Subsections~\ref{ssec:series} and \ref{ssec:Gseries} review the definition of rings of generalized series in the sense of Hahn-Higman-Ribenboim.
Subsection~\ref{ssec:rings_with_powers} introduces some terminology and gives some basic facts concerning the hypothesis on $\Lambda$ and $\bK$ in Theorem~\ref{introthm:B}. Subsections \ref{ssec:ders} and \ref{ssec:compositions}, deal with the set-up of Theorem~\ref{introthm:B}, giving the various notions of formal derivatives and of allowed compositions, as well as laying out what are the needed contexts for all definitions.
Subsection~\ref{ssec:mainres_tc_statements} gives the statements of Theorems~\ref{introthm:A} and \ref{introthm:B}, as well as their proof from a more basic fundamental result (Theorem~\ref{thm:mainT0}).
Finally, subsection~\ref{ssec:weakly_restricted_power_series} is dedicated to the proof of Theorem~\ref{thm:mainT0} and contains most of the key arguments of the section.

Section~\ref{sec:serial} introduces \emph{serial} power-bounded structures (Definition~\ref{def:serial}) giving some examples and the main application (Proposition~\ref{prop:tc-ppty}) of Theorem~\ref{introthm:A}.

Section~\ref{sec:Spherical_completion_review} reviews the results and definitions from \cite{dries2000field}, \cite{tyne2003t}, and \cite{freni2024t} relevant for the last section.

Finally Section~\ref{sec:MourguesRessayre} is dedicated to reviewing Ressayre's construction in \cite{Ressayre1993Integer} in light of the new ingredients available. In particular it contains the last main result, Theorems~\ref{introthm:C}, as well as the answer to an open question in \cite{ehrlich2021surreal}.

\subsection{Acknowledgments}
A preliminary version of this paper was part of a PhD project at the University of Leeds and is supported by a Scholarship of the School of Mathematics. The current version, with the new more general versions of Theorem~\ref{introthm:B}, the treatment of truncations of implicit functions, and the improvement of Corollary~\ref{cor:initial} was supported by the Czech Science Foundation grant 26-22545I and by the Czech Academy of Sciences CAS (RVO 67985840).
    
The author is very grateful to his supervisors Vincenzo Mantova for his support, constant feedback and for suggesting Corollary~\ref{cor:initial}, Dugald Macpherson for his support and feedback, and to Lou van den Dries for some remarks about the previous version which was included as Chapter~4 in the authors Doctoral dissertation \cite{wreo36168}.

\section{Truncations in languages of power series}\label{sec:main}
	
This section deals with the technical results about languages of generalized power series and their truncations. In particular, Subsections~\ref{ssec:mainres_tc_statements} and \ref{ssec:weakly_restricted_power_series} give the statements and proof of the Theorems~\ref{introthm:A} and \ref{introthm:B} of the introduction. 

\subsection{Well partial orders and their segmentations}\label{ssec:wpocombinatorics}
We collect here some known results about segmentations of well partial orders that will be needed throughout the paper.

\begin{definition}
	Let $(\Gamma, \le)$ be a partially ordered set. I call \emph{segment} an order-convex subset of $\Gamma$ and I call a partition $\cP$ of $\Gamma$ a \emph{segmentation} if every $P \in \cP$ is order-convex. Given two partitions $\cP$ of $\Gamma$ and $\cQ$ of $\Delta$, I denote by $\cP \otimes \cQ$ the partition $\{P \times Q: P \in \cP,\; Q \in \cQ\}$ of $\Gamma \times \Delta$.
\end{definition}

\begin{remark}
	Given two partitions $\cP$ and $\cQ$ of the same set $X$ one says that $\cQ$ is \emph{finer} than $\cP$ (or that $\cP$ is \emph{coarser} than $\cQ$) if every $P\in \cP$ is a union of elements of $\cQ$. 
	Given a family $\cF$ of subset of a set $X$, we call a partition $\cP$ of $X$, the partition \emph{generated} by $\cF$ if it is the coarsest partition such that each element of $F$ is a union of elements of $\cP$. The elements of $\cP$ are the elements minimal under inclusion in the family
    \[\big\{(\textstyle{\bigcap} S_0)  \cap (\textstyle{\bigcap} S_1) : S_0 \in \cF,\; S_1\in \{X\setminus X': X' \in \cF\}\big\} \setminus \{\emptyset\}\]
\end{remark}

\begin{remark}\label{rmk:finite_segmentations_are_filtered}
    The intersection of two segments is again a segment, thus any finite set of segments generates a finite segmentation. In particular given two finite segmentations $\cP$ and $\cQ$ of a partial order $\Gamma$, there is a finite segmentation finer than both of them.
\end{remark}

\begin{notation}
    If $(\Gamma, \le)$ is a partial order and $A$, $B$ are sets, we will denote by $[A, B)$ the convex set
    \[[A, B):=\{\gamma \in \Gamma: (\exists \alpha \in A, \,\alpha \le\gamma)\; \&\; (\forall \beta \in B, \, \gamma \not \le \alpha)\}.\]
    If $A=\{\alpha\}$ and $B=\{\beta\}$ are singletons, then we will just write $[\alpha, \beta)$ for $[A, B)$.
\end{notation}

\begin{remark}
    If $X\subseteq\Gamma$ is a segment, then $X=[X,B)$ where $B=\{\beta \in \Gamma: \nexists x \in X, x \le \beta\}.$
\end{remark}

\begin{definition}
    Recall that a partial order $(\Gamma, \le)$ is called a \emph{well partial order}, \emph{wpo} for short, if one of the following equivalent condition holds:
    \begin{enumerate}
        \item $(\Gamma, \le)$ does not contain infinite antichains nor infinite descending sequences;
        \item every infinite sequence $(\gamma_i)_{i<\omega}$ in $\Gamma$ has a weakly increasing subsequence $(\gamma_{i(k)})_{k <\omega}$;
        \item every total order $\le'$ extending $\le$ is a well order;
        \item every subset has a finite number of minimal elements.
    \end{enumerate}
    See \cite[Thm.~2.1]{higman1952ordering} for a proof of the equivalence in the more general context of quasi-orders (well partial orders are called there ``orders with the \emph{finite basis property}'', the terminology ``\emph{well partial order}'' is from \cite{kruskal1960well}, another early name for the same concept is ``\emph{partial well order}'' \cite{rado1954partial}). 
\end{definition}

\begin{remark}\label{rmk:initial_segments_of_wpo}
    A partial order $(\Gamma, \le)$ is a well partial order if and only if the set of initial segments of $\Gamma$ ordered by inclusion is well-founded (see \cite[Thm.~2.1]{higman1952ordering}, see also \cite{rado1954partial} for an example of a well partial order whose poset of initial segments has infinite antichains).
\end{remark}

\begin{remark}
    If $\Gamma$ is a wpo, then every segment of $\Gamma$ is of the form $X = [A, B)$ for finite antichains $A,B$. Furthermore if $A$ and $B$ are minimal with $X=[A,B)$ then $\forall \alpha \in A,\, \forall \beta \in B,\, \beta \not \le \alpha$. 
\end{remark}

\begin{remark}\label{rmk:reduction_to_principal}
    If $M$ is a finite set and $\cS$ is a segmentation which refines the partition generated by $\{[m, \emptyset): m \in M\}$, then $\cS$ refines $[M, \emptyset)$.
\end{remark}

\begin{fact}[Dickson's Lemma]\label{fact:dickson}
    If $\Gamma, \Delta$ are wpos, then $\Gamma\sqcup \Delta$ and $\Gamma \times \Delta$ are wpos.
\end{fact}

\begin{proposition}\label{prop:basic_segmentation}
    If $\cR$ is a finite segmentation of $\Gamma \times \Delta$, then there are finite segmentations $\cS$ of $\Gamma$ and $\cT$ of $\Delta$ such that $\cS \otimes \cT$ refines $\cR$.
    \begin{proof}
        Note that by Remarks~\ref{rmk:finite_segmentations_are_filtered} it suffices to show it in the case $\cR=\{U, (\Gamma \times \Delta) \setminus U\}$ where $U$ is an upper set. Furthermore, by Fact~\ref{fact:dickson} and Remarks~\ref{rmk:finite_segmentations_are_filtered} and \ref{rmk:reduction_to_principal} we can further reduce to the case $U=[(\gamma, \delta), \emptyset)$ for some $(\gamma, \delta) \in \Gamma\times \Delta$.
        But in that case it suffices to take $\cS = \{[\gamma, \emptyset), \Gamma \setminus [\gamma, \emptyset)\}$ and $\cT = \{[\delta, \emptyset), \Delta \setminus [\delta, \emptyset)\}$.
    \end{proof}
\end{proposition}

\begin{remark}\label{rmk:fibers}
    If $f: (\Gamma, \le) \to (\Delta, \le)$ is a strictly increasing function, then each fiber of $f$ is an antichain: indeed if $f(\gamma_0)=f(\gamma_1)$ and $\gamma_0 \le \gamma_1$, then we must have $\gamma_0= \gamma_1$ for, if $\gamma_0<\gamma_1$, we would have $f(\gamma_0)<f(\gamma_1)$.
\end{remark}

\begin{corollary}\label{cor:basic_segmentation}
    Let $(\Gamma_0,\le), \ldots, (\Gamma_{n-1}, \le)$ be well partial orders and $(\Delta, \le)$ be a partially ordered set. If $f: \prod_{i<n} \Gamma_i \to \Delta$ is order-preserving, and $\cT$ is a finite segmentation of $\Delta$, then there are finite segmentations $\cS_i$ of $\Gamma_i$, such that $\bigotimes \cS_i$ refines the partition $f^*\cT:=\{ f^{-1}T : T \in \cT\}$. Furthermore if $f$ is strictly increasing then $f$ has finite fibers.
    \begin{proof}
        The first claim follows by a simple induction from Proposition~\ref{prop:basic_segmentation}. As for the second one it follows from the fact that $\prod_{i<n} \Gamma_i$ has no infinite antichains.
    \end{proof}
\end{corollary}

\begin{definition}
    Recall that a \emph{(partially) ordered monoid} is a structure $(\Gamma, \le, \cdot, 1)$ such that $(\Gamma, \le)$ is a partial order, $(\Gamma, \cdot, 1)$ is a monoid and $-\cdot -$ is increasing in both arguments (w.r.t.\ $\le$).
    
    Also recall that a monoid $(\Gamma, \cdot, 1)$ is \emph{cancellative} if for all $\alpha, \beta, \gamma \in \Lambda$, whenever $\alpha \beta=\alpha \gamma$ or $\beta \alpha=\gamma \alpha$, we have $\beta=\gamma$ and that an ordered monoid $(\Gamma, \le, \cdot , 1)$ is \emph{order-cancellative} if for all $\alpha, \beta, \gamma \in \Lambda$, whenever $\alpha \beta\le\alpha \gamma$ or $\beta \alpha\le\gamma \alpha$, we have $\beta\le\gamma$.
    
    We will abbreviate the expression \emph{commutative cancellative monoid} by \emph{c.c.\ monoid} and the expression \emph{commutative order cancellative} monoid by \emph{c.o.c.\ monoid} (with the understanding that this is an ordered monoid).
    We call a partially ordered monoid $(\Gamma, \le ,\cdot, 1)$ \emph{positive}, if $1= \min (\Gamma)$.
\end{definition}

\begin{remark}[C.c.\ monoids and Abelian groups]\label{rmk:ccmonoids_vs_groups}
    Although the results in the remainder of this subsection don't require commutativity, from Subsection~\ref{ssec:rings_with_powers} on we will essentially be only concerned with c.c.\ monoids.
    To help the intuition it is thus worth mentioning that a c.c.\ monoid $(\Gamma, \cdot, 1)$ can always be embedded into its \emph{group of quotients} $(\Gamma^{-1}\Gamma, \cdot,1)$ (when the operation is written additively so when we have $(\Gamma, +, 0)$ this is also referred to as the \emph{group of differences} $\Gamma-\Gamma$).
    The group $\Gamma^{-1} \Gamma$ is formally built as the quotient of the product monoid $\Gamma \times \Gamma$ under the equivalence relation $(\beta, \alpha)\sim (\beta',\alpha') \Leftrightarrow \alpha'\beta=\alpha \beta'$, the embedding $\iota: \Gamma \to \Gamma^{-1} \Gamma$ is given by $\gamma \mapsto (1, \gamma)/_\sim$ and the group inversion of $\Gamma^{-1} \Gamma$ is given by $(\alpha, \beta)/_{\sim} \mapsto (\beta, \alpha)/_{\sim}$, whence after identifying each $\gamma\in \Gamma$ with $\iota(\gamma)$ every element of $\Gamma^{-1}\Gamma$ can be written as $\beta^{-1} \alpha=(\beta, \alpha)/_{\sim}$.

    We refer to \cite[Ch.~II, Sec.~3]{heibsch1998semirings} for further details of the construction (there carried out in greater generality) and in particular to \cite[Ch.~II, Cor.~3.6]{heibsch1998semirings} for the claim that that $\iota$ is an embedding.

    If $\le$ makes $(\Gamma, \cdot, 1, \le)$ into an ordered monoid, then $\le$ can be naturally extended along $\iota$ to a group order on $(\Gamma^{-1} \Gamma, \cdot, 1, \le)$ by setting $\gamma^{-1}\alpha \le^g \delta^{-1} \beta \Leftrightarrow \exists \eta: \delta \alpha \eta \le \gamma \beta \eta$. The embedding $\iota$ will only in general be \emph{order preserving}. It will be \emph{order reflecting} if and only if $(\Gamma, \cdot, 1, \le)$ is \emph{order-cancellative}.
    We refer to \cite[Ch.~II, Thm.~4.1]{heibsch1998semirings} for further details on this.
\end{remark}

\begin{definition}    
    If $(\Gamma, \le)$ is a partial order, then the \emph{ordered monoid of words from $\Gamma$}, is formed as $(\Gamma^*, \le, \emptyset, \cdot)$ where $\Gamma^*=\bigcup_{n<\omega} \Gamma^n$, $\cdot$ is word concatenation, i.e.\ if $\alpha=(\alpha_0,\ldots, \alpha_{n-1}) \in \Gamma^n$ and $\beta=(\beta_0, \ldots, \beta_{m-1}) \in \Gamma^m$, then $\alpha \cdot \beta=(\alpha_0, \ldots, \alpha_{n-1}, \beta_0, \ldots, \beta_{m-1}) \in \Gamma^{m+n}$, and $\le$ is defined by setting $\alpha \le \beta$ iff there is a strictly increasing $i: n\to m$ such that $\beta_{i(j)}\ge \alpha_j$ for all $j < n$.
\end{definition}

\begin{remark}
    Each $\Gamma^n\subseteq \Gamma^*$ is order convex and $\{\Gamma^n: n \in \omega\}$ is a segmentation of $\Gamma^*$.
\end{remark}

\begin{fact}
    $(\Gamma^*, \le, \emptyset, \cdot)$ is the free positive ordered monoid on $\Gamma$. Namely, given any other ordered monoid $(\Delta, \le, 1, \cdot)$ with $1=\min(\Delta)$, the restriction map $-|_{\Gamma}: \Hom\big((\Gamma^*, \le, \emptyset, \cdot), (\Delta, \le, 1, \cdot)\big) \to \Pos(\Gamma, \Delta)$ is a bijection.
\end{fact}

\begin{fact}[Higman {\cite[Thm.~4.3]{higman1952ordering}}]\label{fact:higman}
    If $(\Gamma, \le)$ is a wpo, then $(\Gamma^*, \le)$ is a wpo. 
\end{fact}

The following consequence of the Fact~\ref{fact:higman} is usually referred to as Neumann's Lemma, as Neumann proved it in the case $(\Gamma, \le , \cdot, 1)$ is the monoid of positive elements from a totally ordered group, \cite[Thm.s~3.4 and 3.5]{neumann1949ordered}.
\begin{corollary}[Neumann]\label{cor:neumann}
    If $(\Gamma, \le, \cdot, 1)$ is a positive partially ordered monoid and $S\subseteq \Gamma$ is a wpo, then the submonoid $\langle S \rangle$ generated by $S$ is a wpo, furthermore if $1\notin S$ and $(\Gamma, \cdot, 1)$ is cancellative, then the canonical surjection $S^* \to \langle S \rangle$ has finite fibers.
    \begin{proof}
        Since $\langle S \rangle$ is an image of $S^*$ under an order preserving map, and by Fact~\ref{fact:higman} $S^*$ is a wpo, it follows that $\langle S \rangle$ is a wpo.
        Now suppose $(\Gamma, \cdot)$ is cancellative and $1 \notin S$, so $S>1$ by positivity of $\Gamma$.
        In such a case the canonical surjection $S^* \to \langle S \rangle$ is strictly increasing, so by Remark~\ref{rmk:fibers}, it must have finite fibers.
    \end{proof}
\end{corollary}

\begin{definition}
    If $\cS$ is a segmentation of $(\Gamma, \le)$, then we define $\cS^*$ as the segmentation of $\Gamma^*$ given by
    \[\cS^*:= \bigcup_{n<\omega}\cS^{\otimes n}.\]
\end{definition}

\begin{proposition}\label{prop:word_segmentation}
    If $\cR$ is a finite segmentation of $\Gamma^*$, then there is a finite segmentation $\cS$ of $\Gamma$ such that $\cS^*$ refines $\cR$.
    \begin{proof}
        We can suppose that $\cR=\{U, \Gamma^*\setminus U\}$, furthermore, since $U$ has finitely many minimal elements by Fact~\ref{fact:higman}, we can, applying Remarks~\ref{rmk:reduction_to_principal} and \ref{rmk:finite_segmentations_are_filtered}, further assume that $U=[\overline{\gamma}, \emptyset)$ for some $\overline{\gamma}=(\gamma_0, \ldots, \gamma_{n-1})$.

        I claim that the segmentation $\cS$ generated by the final segments of the form $[\gamma, \emptyset)$ with $\gamma \in M:= \{\gamma_0, \ldots, \gamma_{n-1}\} \cup \min (\Gamma)$ is enough. Let $S= \prod_{i<m} S_i \in \cS^*$ with $m>n$. The minimal elements of $U \cap \Gamma^m$ all have coordinate projections in the set $M$, and $\cS^{\otimes n}$ clearly refines the partition generated by $[\gamma, \emptyset)$ for all $\gamma \in M$.
    \end{proof}
\end{proposition}

\begin{corollary}\label{cor:word_segmentation}
    Let $(\Gamma_0,\le), \ldots, (\Gamma_{n-1}, \le)$ be well partial orders and $(\Delta, \le)$ be a partially ordered set. If $f: \prod_{i<n} \Gamma_i^* \to \Delta$ is order-preserving, and $\cT$ is a finite segmentation of $\Delta$, then there are finite segmentations $\cS_i$ of $\Gamma_i$, such that $\bigotimes_{i<n} \cS_i^*$ refines the partition $f^*\cT:=\{ f^{-1}T : T \in \cT\}$.
\end{corollary}

\subsection{Rings of generalized series}\label{ssec:series} Throughout this section we fix a ring $\bK$.

Let $(\fM, \le)$ be a partial order, the set of generalized series with coefficients from $\bK$ and monomials from $\fM$ is defined as as the $\bK$-module of $\fM$-tuples $(f_\fm)_{\fm \in \fM} \in \bK^\fM$ whose support $\Supp f:=\{\fm: f_\fm\neq 0\}$ is a well-partial order for the (restriction of the) reverse order $\le^{\op}$ on $\fM$ (that is, $\Supp f$ does not contain infinite strictly $\le$-increasing sequences nor infinite $\le$-antichains). Following \cite{hoeven2001operators} we will refer to this condition as $\Supp(f)$ being \emph{Noetherian}.

A family $(f_i)_{i \in I} \in \bK\Lhp \fM \Rhp$ is said to be \emph{summable} if for each $\fm \in \fM$, $\{i : (f_i)_\fm \neq 0\}$ is finite and $\bigcup_{i \in I} \Supp f_i$ is still a well-partial order for $\le^\op$. If $(f_i)_{i \in I}$ is summable its sum is defined as the function	
\[\left(\sum_{i \in I} f_i\right)_\fm := \sum_{i \in I} (f_i)_\fm.\]

Elements of $\fM$ are regarded as elements of $\bK \Lhp \fM \Rhp$ by identifying $\fm$ with the function that is $1\in \bK$ at $\fm$ and $0$ at $\fn\neq \fm$. In that sense every element $f \in \bK \Lhp \fM \Rhp$ can be regarded as the sum of the summable family $(f_\fm \fm)_{\fm \in \fM}$, so \[f= \sum_{\fm \in \fM} \fm f_\fm.\]

If $(\fM, \cdot, 1, \le)$ is a (possibly partially) ordered \emph{cancellative} monoid, then $\bK \Lhp\fM\Rhp$ has a natural ring structure.
The product $\cdot$ on $\bK \Lhp \fM \Rhp$ is defined as the only $\bK$-bilinear extension of the product on $\fM$ which is strongly bilinear in the sense that for every pair $(g_i)_{i \in I}$ and $(f_j)_{j \in J}$ of summable families in $\bK \Lhp \fM \Rhp$,
			
\[\left(\sum_{i \in I} f_i \right) \cdot \left( \sum_{j \in J} f_j\right) = \sum_{(i,j)\in I \times J} f_i \cdot f_j.\]
			
It is not hard to verify that defining the product as
\[\left(\left(\sum_{\fm \in \fM} a_{\fm} \fm \right) \cdot \left(\sum_{\fn \in \fM} b_{\fn} \fn \right) \right)_\fp := \sum_{\fm\fn =\fp} a_\fm b_\fn\]
yields the required property. We call a generalized series $f=\sum_{\fm} f_\fm \fm$ \emph{infinitesimal} if $\Supp(f)<1$ and \emph{non-singular} if $\Supp(f)\le 1$ and \emph{normal} if $\Supp(f)$ has a maximum. We will denote the sets of infinitesimal and non-singular series respectively by $\bK \Lhp \fM \Rhp^{\prec 1}=\bK\Lhp \fM^{<1}\Rhp$ and $\bK \Lhp \fM \Rhp^{\preceq 1}=\bK \Lhp \fM^{\le 1}\Rhp$.

\begin{remark}
    $\bK \Lhp \fM \Rhp^{\preceq 1}$ is a unital subring of $\bK\Lhp \fM \Rhp$ and $\bK \Lhp \fM \Rhp^{\prec 1}$ is an ideal in $\bK \Lhp \fM \Rhp^{\preceq 1}$. If $\fM$ is totally ordered, then all series in $\bK\Lhp \fM \Rhp\setminus \{0\}$ are normal.
\end{remark}

\begin{remark}
    If $\bK$ is a domain, then $\bK \Lhp \fM \Rhp$ is a domain.
\end{remark}

\begin{remark}[Units]\label{rmk:units}
    If $\bK$ is a domain, then a series $f\in \bK \Lhp \fM \Rhp$ is a unit if and only if $\Supp(f)$ has a maximum $\fm$ admitting a multiplicative inverse in $\fM$, and the coefficient of $\fm$ in $f$ is a unit of $\bK$.
    In fact if $f=k\fm(1+\varepsilon)$ with $k \in \bK$, $\fm \in \fM$, $\varepsilon \in \bK \Lhp \fM \Rhp^{\prec 1}$, and there are $\fm^{-1} \in \fM$, $k^{-1} \in \bK$ multiplicative inverses of $\fm$ and $k$ respectively, then the multiplicative inverse of $f$ is given by $k^{-1}\sum_{n} (-1)^n \varepsilon^n \fm^{-1}$. Conversely, if a series $f$ is invertible then its support must have a maximum, because otherwise $\Supp(fg)$ must always have more than one maximal element for any non-zero series $g$. The same should be true for $f^{-1}$, whence we see that it must be $f=\fm k(1+\varepsilon)$ with $k \in \bK$, $\fm \in \fM$, $\varepsilon \in \bK \Lhp \fM \Rhp^{\prec 1}$ and that $k$ and $\fm$ must have multiplicative inverses.%
\end{remark}

\begin{remark}[Monomial transformations]\label{rmk:monomial_transforms}
	If $\sigma: \fM \to \fN$ is a morphism of ordered cancellative monoids whose fibers are antichains, then $\sigma$ induces a ring homomorphism which we still denote by $\sigma$,
	\[\sigma: \bK \Lhp \fM\Rhp \to \bK \Lhp \fN \Rhp \qquad \sigma \sum_{\fm\in \fM} f_\fm \fm = \sum_{\fm \in \fM} f_\fm \sigma \fm,\]
	which is well defined because for all $\fn \in \fN$, $\sigma^{-1}(\fn)\cap \Supp(f)$ must be finite.
\end{remark}
			
\begin{definition}[Truncations and Segments]
	If $S$ is a subset of $\fM$, and $f :=\sum_{\fm} f_\fm \fm \in \bK \Lhp \fM \Rhp$ we will call the \emph{$S$-fragment of $f$} the series $f|S = \sum_{\fm \in S} f_\fm \fm$. 
    If $S$ is a segment of $\fM$, then the $S$-fragment $f|S$ is called a \emph{segment} of $f$.
    
    If $\fm \in \fM$, the \emph{$\fm$-truncation} of $f$, denoted by $f|\fm$ is the $S$-segment of $f$ with $S=\{\fn: \fn \not \le \fm\}$ and the \emph{$\fm$-tail} of $f$ is $f-f|\fm$.
	A subset $X \subseteq \bK \Lhp \fM \Rhp$ will be said to be \emph{weakly closed under truncations} (resp.\ \emph{weakly closed under tails}) if for each $f \in X$ and each $\fm \in \fM$, $f|\fm \in X$ (resp.\ $f-f|\fm\in X$).
    It will be said to be \emph{closed under truncations} (resp.\ \emph{tails}) if for each $f\in X$ and every finial segment $U$ of $\fM$, $f|U\in X$ (resp.\ $f-f|U \in X$). Finally $X$ will be said to be \emph{closed under segments} if for every $f\in X$ and every segment $S$ of $\fM$, $f|S \in X$.
\end{definition}

\begin{remark}
    If $<_1$ and $<_2$ are orders on $\fM$ and $<_1 \subseteq <_2$, then each $<_2$ segment is also a $<_1$ segment. Hence if $X \subseteq \bK \Lhp \fM \Rhp$ is closed under $<_1$-truncations (resp.\ segments), then it is also closed under $<_2$-truncations (resp.\ segments).
\end{remark}

\begin{remark}
    If an $X \subseteq \bK \Lhp \fM \Rhp$ is closed under truncations or under tails, and is an additive subgroup, then it is closed under segments.
\end{remark}

\begin{remark}
    If $\fM$ is totally ordered, then an $X \subseteq \bK \Lhp \fM \Rhp$ is closed under truncations if and only if it is weakly closed under truncations.
\end{remark}

 
\begin{remark}\label{rmk:wtc_vs_segc}
    If $(\fM, \le)$ has finite meets (so $(\fM, \le^\op)$ has finite joints), and $X$ is a subgroup of $\bK \Lhp \fM \Rhp$, then $X$ is closed under weak truncations if and only it is closed under segments.
    In fact let $S$ be a segment of $\fM$: then it can be written as $S=L_0\setminus L_1$ where $L_0$ and $L_1$ are lower segments of $(\fM, \le)$ and $L_1 \subseteq L_0$, so that $f|S=f|L_0 - f|L_1$, thus we can reduce to the case in which $S$ is itself a lower segment of $(\fM, \le)$ and thus an upper segment of $(\fM, \le^\op)$.
    Now $\Supp(f) \cap L$ has a finite set $M=\{\fm_0, \ldots, \fm_{n-1}\}$ of maximal elements, thus we can write $f$ as 
    \[f=f|[\fm_0, \emptyset)_{\le^\op} + \sum_{0<i<n} \left(f\bigg|\bigg[\fm_{i}, \emptyset\bigg)_{\le^\op} - f\bigg|\bigg[\bigvee_{j\le i}\fm_j, \emptyset\bigg)_{\le^\op}\right).\]
    Thus noting that $f|[\fm_, \emptyset)_{\le^\op}=f-(f|\fm)$ we can conclude.
    Note that if $\fM$ is a product of partial orders with finite meets, then $\fM$ has finite meets.
\end{remark}

\begin{remark}\label{rmk:partial_truncations}
    If $X \subseteq \bK \Lhp \fM \times \fN\Rhp$ is an additive subgroup, then for $X$ to be closed under truncations it is enough that $X$ is closed under \emph{partial} truncations, i.e.\ that it is closed under truncations of the form $f|(S \times \fN)$ and $f|(\fM \times T)$ for $S$, $T$ final segments of $\fM$ and $\fN$ respectively. Indeed it immediately follows that since $X$ is an additive subgroup, if it is closed under partial truncations, then it is then closed in general under segments of the form $f|(S \times \fN)$ and $f|(\fM \times T)$ where $S$ and $T$ are just segments resp.\ of $\fM$ and $\fN$.
    
    Now given a final segment $U \subseteq \fM \times \fN$, by Proposition~\ref{prop:basic_segmentation}, we can write $U\cap \Supp(f)= \bigsqcup_{i<n} S_i \times T_i$ where $S_0, \ldots S_{n-1}$ are segments of $S \cap \pi_{\fM}(\Supp(f))$, $T_i$ are segments of $\pi_{\fN}(\Supp(f))$ and $\pi_\fM, \pi_\fN$ are the projections of $\fM \times \fN$ respectively on the first and second component.
    Now clearly $f|(S_i \times T_i)=\big(f|(S_i\times \fN)\big)|(\fM \times T_i)$, so letting $S_i'$ and $T_i'$ be respctively the convex hulls of $S_i$ in $\fM$ and $T_i$ in $\fN$, we can then write $f|L=\sum_{i<n}(f|(S_i \times \fN)\big)|(\fM \times T_i)$.
\end{remark}


The following Lemma is essentially a restatement of \cite[Lem.~3.3]{mourgues1993every} in the slightly more general setting of partially ordered cancellative monoids.

\begin{lemma}\label{lem:product_segmentation}
    Let $(\fM, \le, \cdot, 1)$ be a partially ordered cancellative monoid and let $S_0, S_1$ be Noetherian subsets of $\fM$. Then for each final segment $U$ of $\fM$, there are \emph{final segments} $U_0, \ldots, U_{n-1}$ of $S_0$ and \emph{segments} $T_0, \ldots, T_{n-1}$ of $S_1$ such that
    $(S_0 \times S_1) \cap U = \bigsqcup_{i<n} U_i \times T_i$.
    In particular whenever $f,g \in \bK \Lhp \fM\Rhp$ are such that with $\Supp(f)\subseteq S_0$ and $\Supp(g) \subseteq S_1$, $(f \cdot g)|U = \sum_{i<n} (f|U_i) \cdot (g_i|T_i)$.
    \begin{proof}
        This follows directly from Corollary~\ref{cor:basic_segmentation}: we get finite segmentations $\cS_0$ and $\cS_1$ of $S_0$ and $S_1$ such that $\cS_0 \otimes \cS_1$ refines the partition generated by $\{(\fm, \fn) \in S_0 \times S_1: \fm\fn \in U\}$.
        Set for each $T \in \cS_1$, $U_T=\bigcup\{R \in \cS_0: R \times T \subseteq U$ and notice that $U_T$ is always a final segment. Then let $T_0, \ldots, T_{n-1}$ be an enumeration of those $T$s such that $U_T \neq \emptyset$ and $U_i:=U_{T_i}$.
    \end{proof}
\end{lemma}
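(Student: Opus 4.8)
The plan is to read $(S_0\times S_1)\cap U$ as $(S_0\times S_1)\cap\mu^{-1}(U)$, where $\mu\colon\fM\times\fM\to\fM$ is multiplication, and to reduce to the basic segmentation machinery of Subsection~\ref{ssec:wocombinatorics}. Since multiplication in a linearly ordered group is order-preserving in each argument separately, $\mu$ is order-preserving as a map from $\fM^{\op}\times\fM^{\op}$ (with the product order) to $\fM^{\op}$, and a final segment $U$ of $\fM$ is an initial segment of $\fM^{\op}$; hence $\mu^{-1}(U)$ meets $S_0\times S_1$ in an initial set for the product order on $(S_0,<^{\op})\times(S_1,<^{\op})$. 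First I would invoke Corollary~\ref{cor:basic_segmentation} (equivalently Lemma~\ref{lem:basic_segmentation}, or Corollary~\ref{cor:basic_segmentation_Sigma}) with $\Gamma_i=(S_i,<^{\op})$, target poset $\fM^{\op}$, the map $\mu$ restricted to $S_0\times S_1$, and the two-block segmentation $\{U,\fM\setminus U\}$, to obtain finite segmentations $\cS_0$ of $S_0$ and $\cS_1$ of $S_1$ (whose blocks are order-convex, irrespective of orientation) such that $\cS_0\otimes\cS_1$ refines $\{(S_0\times S_1)\cap\mu^{-1}(U),\,(S_0\times S_1)\setminus\mu^{-1}(U)\}$. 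Thus every box $R\times T$ with $R\in\cS_0$ and $T\in\cS_1$ is either entirely inside or entirely disjoint from $\mu^{-1}(U)$.

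Next, for each $T\in\cS_1$ I would set $U_T:=\bigcup\{R\in\cS_0:R\times T\subseteq\mu^{-1}(U)\}$. The point on which the statement really rests is that $U_T$ is not merely order-convex but a \emph{final} segment of $S_0$ for $<$: if $a\in U_T$ and $a<a'$ in $S_0$, then for every $t\in T$ we have $a't>at$ by right-translation invariance of $<$, while $at\in U$ because the box containing $(a,t)$ lies in $\mu^{-1}(U)$; since $U$ is a final segment, $a't\in U$, so the box of $\cS_0\otimes\cS_1$ containing $(a',t)$ meets $\mu^{-1}(U)$ and is therefore contained in it, which forces the $\cS_0$-block of $a'$ to be one of those defining $U_T$, i.e.\ $a'\in U_T$ (here $T\neq\emptyset$, which holds since $\cS_1$ partitions $S_1$, the case $S_1=\emptyset$ being trivial). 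Because $\cS_0\otimes\cS_1$ refines the partition into $\mu^{-1}(U)$ and its complement, $(S_0\times S_1)\cap\mu^{-1}(U)=\bigsqcup_{T\in\cS_1}U_T\times T$; discarding the empty $U_T$ and enumerating the surviving pairs as $(U_0,T_0),\dots,(U_{n-1},T_{n-1})$ gives the asserted decomposition, with each $U_i$ a final segment of $S_0$ and each $T_i$ a segment of $S_1$.

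Finally, the ``in particular'' clause follows by comparing coefficients: if $\Supp f\subseteq S_0$ and $\Supp g\subseteq S_1$ then $f\cdot g=\sum_{\fm\in S_0,\,\fn\in S_1}f_\fm g_\fn\,\fm\fn$, and $(f\cdot g)|U$ is obtained by restricting this sum to the pairs $(\fm,\fn)$ with $\fm\fn\in U$, that is, to $(S_0\times S_1)\cap\mu^{-1}(U)=\bigsqcup_i U_i\times T_i$; splitting the sum accordingly and using $\Supp(f|U_i)\subseteq U_i$ and $\Supp(g|T_i)\subseteq T_i$ identifies the $i$-th block with $(f|U_i)\cdot(g|T_i)$, so $(f\cdot g)|U=\sum_{i<n}(f|U_i)\cdot(g|T_i)$. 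I expect the only genuinely delicate point to be the verification that the $U_i$ can be taken \emph{final} rather than merely convex; everything else is bookkeeping on top of the basic segmentation lemma, and the possible non-commutativity of $\fM$ is harmless since only the one-sided monotonicity of multiplication is ever used.
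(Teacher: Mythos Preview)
Your proof is correct and follows essentially the same route as the paper: invoke the basic segmentation machinery to obtain finite segmentations $\cS_0,\cS_1$ whose product refines the partition generated by $(S_0\times S_1)\cap\mu^{-1}(U)$, then for each $T\in\cS_1$ collect the $\cS_0$-blocks landing in $U$ into $U_T$ and observe that $U_T$ is a final segment of $S_0$. You have in fact supplied more detail than the paper, which simply writes ``notice that $U_T$ is always a final segment'' without the monotonicity argument you spell out, and which omits the verification of the ``in particular'' clause entirely; your citation of Corollary~\ref{cor:basic_segmentation} (rather than only Corollary~\ref{cor:basic_segmentation_Sigma}) is also slightly more careful, since the latter is stated for Abelian groups while the lemma allows arbitrary linearly ordered groups.
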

    
\begin{lemma}\label{lem:alg_generated_by_tc}
    Let $(\fM, \le , \cdot, 1)$ be a partially ordered cancellative monoid.
    If $X\subseteq \bK \Lhp \fM \Rhp$ is closed under truncations, and $R$ is a subring of $\bK$, then the $R$-algebra generated by $X$ is closed under truncations.
    \begin{proof}
        Note that the $R$-module generated by $X$ is always closed under segmentation, so we can assume $X$ is already a $R$-module closed under segmentation.
        
        Now to prove the statement it suffices to show that if two $R$-modules $X$ and $Y$ are closed under truncations, then so is the $R$-module generated by the set $\{f \cdot g: f \in X, g \in Y\}$. For this, let $L$ be an initial segment of $\fM$: by Lemma~\ref{lem:product_segmentation}, we have that $f=\sum_{i<n} (f|U_i) \cdot (g_i|T_i)$ for some final segments $(U_i)_{i<n}$ of $\fM$ and for some segments $(T_i)_{i<n}$ of $\fM$.
    \end{proof}
\end{lemma}

\begin{definition}
    An ordered commutative monoid $(\fM, \cdot, 1, \le)$ is \emph{positively cone-ordered} if for all $\fn \in \fM$, $\fM^{\ge \fn}=\fn \fM^{\ge 1}$ (cf \cite[Ch.~III, Sec.~1]{heibsch1998semirings} for some basic considerations around this notion which is however not explicitly named).
    We will say that $(\fM, \cdot, 1, \le)$ is \emph{negatively cone-ordered} if $(\fM, \cdot, 1, \le^\op)$ is positively cone-ordered.

    A subset $X\subseteq \bK \Lhp \fM \Rhp$ is said to be \emph{closed under monomial division} if whenever $f\in \bK \Lhp \fM^{\le 1} \Rhp$, $\fm \in \fM$ and $\fm f \in X$ we have $f \in X$.
\end{definition}

\begin{remark}[Cone orderings are order-cancellative]
    If an ordered c.c.\ monoid $(\Lambda, +, 0, \le)$ is positively cone-ordered, then it is order-cancellative, in particular the natural group embedding $\iota$ in the ordered group of differences $(\Lambda -\Lambda, +, 0, \le^g)$ is an order-embedding (cf Remark~\ref{rmk:ccmonoids_vs_groups}). Furthermore if $\Lambda$ is a c.o.c.\ monoid, then it is positively cone-ordered if and only if $\iota(\Lambda)$ contains the positive cone of $\Lambda-\Lambda$.

    It follows that positively cone-ordered monoids are precisely those ordered monoids that are realized as a submonoid of an ordered Abelian group containing the positive cone of the group. 
\end{remark}

\begin{lemma}\label{lem:md-closed_gen_subgroup}
    If a c.c.\ ordered monoid $(\fM, \cdot, 1, \le)$ is negatively cone-ordered and $X\subseteq \bK \Lhp \fM \Rhp$ is closed under monomial division and weakly closed under tails, then the subgroup generated by $X$ is closed under monomial division.
    \begin{proof}
        If $x_0, \ldots, x_{n-1} \in X$ and $\sum_{i<n} x_i=\mathfrak{m}y$ for some $y \in \bK \Lhp \fM^{\le 1} \Rhp$, then since $X$ is closed under tails $x_i|(\fM^{\le \fm}) \in X$, and since $\fM$ is negatively cone ordered $\fM^{\le \fm}=\fm \fM^{\le 1}$, so we get by closure under monomial division of $X$, that there are $y_i \in X \cap \bK \Lhp \fM^{\le 1}\Rhp$ such that $x_i|(\fM^{\le \fm})=\fm y_i$ and we easily see that $y=\sum_{i<n} y_i$ so $y$ is in the subgroup generated by $X$.
    \end{proof}
\end{lemma}

\begin{lemma}\label{lem:md-closed_gen_subring}
    If a c.c.\ ordered monoid $(\fM, \cdot, 1, \le)$ is negatively cone-ordered, $X\subseteq \bK \Lhp \fM \Rhp$ is closed under monomial division and tails, and $\fM \subseteq X$, then the subring generated by $X$ is closed under monomial division and truncations.
    \begin{proof}
        It follows from Lemma~\ref{lem:md-closed_gen_subring} that to prove the statement we can assume that $X$ is already an additive subgroup. Also note that the additive group $\fM \cdot X$ must be contained in the ring generated by $X$ and will be as well closed under monomial division, thus we can also assume $\fM \cdot X= X$.
        
        
        To conclude thus it will suffice to prove that if $X$ is an additive subgroup closed under truncations and monomial division, then so is the subgroup generated by $\{f_0 f_1: f_0, f_1 \in X\}$.
        
        Now if $f_0,f_1 \in X$, since $X$ is a truncation additive closed subgroup, we can find segmentations $\cS_0$ and $\cS_1$ of $\fM$, such that for all $S \in \cS_i$, $f_i|S$ is normal and in $X$.
        It follows that for all $i\in 2$ and $S_i\in \cS_i$, $(f_i|S_i)=\fm_{i,S_i} g_{i,S_i}$ for some $g_{i,S_i}\in X \cap \bK \Lhp \fM^{\le 1} \Rhp$. Furthermore we must have $\fm_{0, S_0}\fm_{1, S_1} \le \fm$ for all $(S_0, S_1) \in \cS_0 \times \cS_1$. Thus in particular $\fm_{0, S_0}\fm_{1, S_1}=\fp_{S_0, S_1}\fm$ for some $\fp_{S_0, S_1} \le 1$. 
        Finally we can write
        \[f_0\cdot f_1 = \sum_{(S_0,S_1) \in \cL} (f_0|S_0) \cdot (f_1|S_1)=\fm\cdot \sum_{(S_0,S_1) \in \cL} \fp_{S_0, S_1} g_{0, S_0}g_{1, S_1}\]
        where each of the addends on the right side is in $\{h_0h_1: h_0, h_1 \in X\}$ because $\fM \cdot X=X$.
    \end{proof}
\end{lemma}

\subsection{Languages of \texorpdfstring{$\Lambda$}{Λ}-power series}\label{ssec:Gseries}
This subsection is dedicated to some formalities about power series with exponents from an ordered semiring $\Lambda$.

We fix for the rest of the section an infinite set of formal variables $\Var$. We will denote variables and sets of variables by $\var{x}, \var{y}, \var{z}, \ldots$.
If $\var{x}$ is a set of variables, we will denote by $S^\var{x}$ the set of assignments from $S$ for $\var{x}$, i.e.\ the set of functions from $\var{x}$ to $S$.

If $n \in \bN$ and $\var{x}=\{\var{x}_i: i<n\}\subseteq \bN$ is a finite set $n$ distinct variables, and $(a_i)_{i<n} \in S^{n}$ is an $n$-tuple from $S$, sometimes we will use the notation $[\var{x}_0 \mapsto a_0, \ldots \var{x}_{n-1} \mapsto a_{n-1}]=[\var{x}_i \mapsto a_i]_{i<n}$ for the assignment $a\in S^{\var{x}}$ given by $a(\var{x}_i)=a_i$.
With a slight abuse of notation we will also write $\var{x}$ for the identical assignment $[\var{x}_0\mapsto\var{x}_0, \ldots, \var{x}_{n-1}\mapsto \var{x}_{n-1}]$.

If $\var{x}, \var{y}, \var{z}, \ldots$ are pairwise disjoint sets of variables, and $a \in S^\var{x}, \, \in S^\var{y}, \, c \in s^{\var{z}},\, \cdots$ are assignments, I will denote by $(a,b,c, \ldots) \in S^{(\var{x}, \var{y}, \var{z}, \ldots)}$ the assignment which is $a$ on $\var{x}$, $b$ on $\var{y}$, $c$ on $\var{z}$ and so on.

\begin{notation}\label{not:monomials}
    Let $n \in \bN$ and $\var{x}=\{\var{x}_i: i<n\}\subseteq \Var$ a set of $n$ distinct variables.
    If the an assignment $m \in M^{\var{x}}$ takes values in some multiplicatively written $\Lambda$-module $(M, \cdot, (-)^{\lambda}_{\lambda \in \Lambda})$ and $\gamma \in \Lambda^{\var{x}}$, then we will use the notation $m^\gamma$ to denote the product $m^\gamma=\prod_{i<n}m(\var{x_i})^{\gamma(\var{x}_i)}$.
\end{notation}

\begin{definition}[$\Lambda$-power series]\label{defn:gen_power_series}
     Let $\Lambda$ be an ordered commutative and additively cancellative semiring and $\var{x}= \{\var{x}_i :i<n\}$ a set of $n$ distinct formal variables.
    The set of \emph{momonials in $\var{x}$ with exponents from $\Lambda$} is the free $\Lambda$-module on the set $\var{x}$, which we denote by $(\var{x}^\Lambda, \cdot ,1)$ writing its operation multiplicatively.
    A \emph{monomial} in $\var{x}$ with exponents from $\Lambda$ can thus be written uniquely as a product $\var{x}_0^{\gamma_0} \cdots \var{x}_{n-1}^{\gamma_{n-1}}$, which, following Notation~\ref{not:monomials} we write as $\var{x}^\gamma$, where $\gamma \in \Lambda^{\var{x}}$ is the assignment $\var{x}_i \mapsto \gamma_i$.
    
	We order the set of monomials $(\var{x}^\Lambda, \cdot, 1, \le)$ by
	\[\var{x}^{\alpha}\var{x}^\beta=\var{x}^{\alpha+\beta}, \qquad \var{x}^{\gamma}\ge \var{x}^{\beta} \Longleftrightarrow \gamma \le \beta,\]
	where $\Lambda^{\var{x}}$ is given the product (partial) order. The intuition for reversing the order here is that positive infinitesimals will be assigned to the $\var{x}_i$.
	
	For any ring $\bK$, the ring $\bK \Lhp \var{x}^{\Lambda} \Rhp$ is called the \emph{ring of $\Lambda$-power series in $\var{x}$} with coefficients from $\bK$. A \emph{$\Lambda$-power series} with coefficients form $\bK$ is an element of $\bK \Lhp \var{x}^{\Lambda} \Rhp$, so it can be expressed as a formal sum
	\[f=\sum_{\gamma \in \Lambda^\var{x}} f_\gamma \var{x}^{\gamma}\]
	where $\{\gamma \in \Lambda^\var{x}: f_\gamma \neq 0\}$ is a well-partial order. Also note that $f(\var{x}) \in \bK \Lhp \var{x}^\Lambda\Rhp^{\prec 1}$, i.e.\ $f$ is infinitesimal if and only if $\{\gamma \in \Lambda^{\var{x}}: f_\gamma \neq 0\}>0$.
    When $\Lambda=\bN$ a $\Lambda$-power series is just a formal power series.
\end{definition}


\begin{definition}[Semiring of exponents, Classical variables]
    Let $\var{y}\subseteq \var{x}\subseteq \Var$ and $f \in \bK \Lhp \var{x}^\Lambda\Rhp$. We call semiring of $\var{y}$-exponents of $f$, the smallest subsemiring $\Lambda_{f, \var{y}}\coloneqq \RExp(f,\var{y})$ of $\Lambda$ such that $f \in \bK \Lhp \var{y}^{\Lambda_0}(\var{x}\setminus \var{y})^{\Lambda}\Rhp$.
    
    We say that the variables $\var{y}$ are \emph{classical} in $f$ or that the $\Lambda$-power series $f$ is \emph{classical} in the variables $\var{y}$ when $\RExp(f,\var{y})$ is the minimal subsemiring of $\Lambda$ (i.e. the image of $\bN$ under the unique semiring homomorphism $\bN \to \Lambda$).
\end{definition}

\begin{remark}[Reindexings]
	If $\var{x}\subseteq \var{y}$, then every monomial $\var{x}^\gamma$ is identified with the monomial in $\var{y}$ given by $\var{y}^{\beta}$ where $\beta \in \Lambda^{\var{y}}$ is given by $\beta(\var{z}) = \gamma(\var{z})$ for each $\var{z} \in \var{X}$ and $\beta(\var{z})=0$ otherwise. This induces natural inclusions 
	\[\var{x}^{\Lambda}\subseteq \var{y}^{\Lambda}, \qquad \bK \Lhp \var{x}^{\Lambda} \Rhp \subseteq \bK \Lhp \var{y}^{\Lambda} \Rhp.\]
    The set of \emph{all} monomials with exponents from $\Lambda$ is then $\Var^{(\Lambda)}$, the free $\Lambda$-module on the set $\Var$.
	Similarly if $\sigma : \var{x} \to \var{y}$ is an injection, it induces natural inclusions
	\[\sigma : \var{x}^\Lambda \to \var{y}^\Lambda, \qquad \sigma : \bK \Lhp \var{x}^{\Lambda} \Rhp \to\bK \Lhp \var{y}^{\Lambda} \Rhp.\]
			
	It is convenient to extend this reindexing to non-injective $\sigma: \var{x} \to \var{y}$, to take into account the operation of forming for example $f(\var{x}, \var{x})$ out of $f(\var{x}, \var{y})$. This is entirely possible because even if $\sigma:\var{x} \to \var{y}$ is not injective, $\sigma: \var{x}^{\Lambda} \to \var{y}^{\Lambda}$ is still strictly increasing (cf Remarks~\ref{rmk:fibers} and \ref{rmk:monomial_transforms}). Thus we will call \emph{reindexing} any function $\sigma: \var{x} \to \var{y}$ for $\var{x}$ and $\var{y}$ finite sets of variables.
\end{remark}

\begin{definition}[Weakly restricted $\Lambda$-power series]\label{def:weakly_restricted}
    It will be useful to consider rings of generalized series of the form $\bK \Lhp \fM \times \var{x}^{\Lambda}\Rhp$ where $\fM$ is some ordered $\Lambda$-module. These live naturally in the ring of $\Lambda$-power series $\bE\Lhp \var{x}^\Lambda\Rhp$ where $\bE$ is the ring $\bK\Lhp \fM \Rhp$,
    in fact every $f \in \bK \Lhp \fM \times \var{x}^\Lambda\Rhp$ can be written, after identifying $(\fm,1)$ with $\fm$ and $(1,\var{x}^\gamma)$ with $\var{x}^{\gamma}$, as 
    \[f=\sum_{\gamma \in \Lambda^{\var{x}}} f_\gamma \var{x}^\gamma \quad \text{where} \quad f_\gamma \in \bE\coloneqq\bK \Lhp \fM \Rhp\]
    and such an $f \in \bE\Lhp \var{x}^\Lambda \Rhp$ is in $\bK \Lhp \fM\times \var{x}^\Lambda\Rhp$ if and only if $\Supp_\fM(f)\coloneqq \bigcup_{\gamma \in \Lambda^\var{x}} \Supp(f_\gamma)$ is a Noetherian subset of $\fM$.

    We call the series in $\bK \Lhp \fM \times \var{x}^\Lambda\Rhp$ \emph{weakly restricted} $\Lambda$-power series with coefficients in $\bE=\bK \Lhp \fM \Rhp$. If $\fM=1$ is the trivial $\Lambda$-module, then we get the back the notion of $\Lambda$-power series with coefficients from $\bK$. The reason for the name ``weakly restricted'' is that (under suitable conditions on $\bK$, $\Lambda$, and $\fM$) such series are the ones that can be evaluated (i.e.\ converge) at infinitesimal powerable elements of $\bK \Lhp \fM \Rhp$ (see Definition~\ref{defn:genps_interpretation} and Remark~\ref{rmk:on_terminology} below).
    Again when $\Lambda=\bN$ we call elements of $\bK\Lhp \fM \times \var{x}^\bN\Rhp$ just \emph{weakly restricted power series} (thus dropping the $\bN$).
\end{definition}

\begin{remark}[Monomial substitution]\label{rmk:monomial_substitution}
    If $f\in \bK \Lhp \fM \Rhp$ is as in Definition~\ref{def:weakly_restricted}, and $\fM$ has the property
    \begin{equation}\label{eq:ppty-adequate}
    \forall \fm \in \fM^{<1}, \;\Lambda \ni \lambda \mapsto \fm^\lambda \in \fM \; \text{is one-to one},
    \end{equation}
    then for any $\fm \in \fM^{<1}$, the family $(\fm^\gamma f_\gamma: \gamma \in \Lambda^{\var{x}})$ is summable in $\bK \Lhp \fM\Rhp$, because its support is contained in $\Supp_\fM(f) \cdot \{\fm^\gamma: \gamma \in \Lambda, f_\gamma \neq 0\}$ which is the product of two Noetherian sets, by (\ref{eq:ppty-adequate}), for all $\fn \in \fM$ there are only finitely many pairs $(\fp, \alpha) \in \Supp_{\fM}(f)\times \{\gamma: \gamma \in \Lambda, f_\gamma \neq 0\}$ such that $\fp \fm^{\alpha}=\fn$, and for each such pair only finitely many $\gamma$ with $\fp \in \Supp(f_\gamma)$. Also note that in order for $\fM$ to have property (\ref{eq:ppty-adequate}) it is necessary that $\Lambda$ has such a property as $\Lambda$-module.
\end{remark}

\begin{definition}\label{def:algebras&languages}
	A \emph{family} $\cF$ of weakly restricted $\Lambda$-power series with coefficients from $\bK \Lhp \fM \Rhp$ is the datum for every finite $\var{x}\subseteq \Var$ of a subset $\cF(\var{x})\subseteq \bK \Lhp \fM \times \var{x}^\Lambda\Rhp$.
    If $\var{x}, \var{y}, \ldots$ are pairwise disjoint finite sets of variables, will write $\cF(\var{x}, \var{y}, \ldots)$ as a short hand for $\cF(\var{x} \cup \var{y} \cup \cdots)$.
    
    A family $\cF$ is said to be \emph{truncation-closed} if each $\cF(\var{x})$ is closed under truncations (see Remark~\ref{rmk:partial_truncations}).

     We say that a family $\cF$ is a \emph{language} of weakly restricted $\Lambda$-power series if for any reindexing function $\sigma: \var{x} \to \var{y}$, $\sigma \cF(\var{x}) \subseteq \cF(\var{y})$.
			 
     Given a ring homomorphism $S \to \bK\Lhp \fM \Rhp$, the language $\cF$ is said to be an $S$-\emph{algebra} if $\cF(\var{x})$ is a $S$-subalgebra of $\bK \Lhp \fM \times \var{x}^\Lambda\Rhp$ for every $\var{x}$.    
\end{definition}

\begin{remark}
    The reason for the terminology ``$\cF$ is a language" is that a first-order functional signature can be identified with a set of terms closed under reindexing (the set of its unnested terms). At some point we will want to regard the generalized power series of $\cF$ as function symbols for a first-order signature. However by construction every generalized power series comes ``already applied to some formal variables'', hence it is more correct to identify $\cF$ with a set of unnested terms and call it a \emph{language} when it is closed under reindexing.
\end{remark}

\begin{remark}
	Note that for brevity we are adopting the non-standard convention that when we say that a family $\cF$ is an algebra we always mean it is also a language (so we require it to be also closed under reindexings). 
\end{remark}


\begin{lemma}\label{lem:tc_generates_tc}
	If a family $\cF$ of weakly restricted $\Lambda$-power series with coefficients from $\bK\Lhp \fM \Rhp$ is truncation-closed, and $S\subseteq \bK \Lhp \fM \Rhp$ is a truncation closed subring, then the $S$-algebra generated by $\cF$ is truncation-closed.
	\begin{proof}
		First observe that by Lemma~\ref{lem:alg_generated_by_tc}, the $S$-algebra generated by each $\cF(\var{x})$ is closed under truncation provided that $\cF(\var{x})$ was.

        Thus to prove the Lemma it suffices to show that if $\cF$ is such that each $\cF(\var{x})$ is a truncation-closed algebra then the language generated by $\cF$ is truncation-closed. Note that the language generated by $\cF$ is given by $\cG:=\{\cG(\var{x}): \var{x} \subseteq \Var\}$ where $\cG(\var{x})=\bigcup\{\sigma \cF(\var{y}): \var{y} \subseteq \Var,\, |\var{y}|<\aleph_0,\, \sigma: \var{y} \to \var{x}\}$. Since a union of truncation-closed subsets is truncation-closed, to show $\cG$ is truncation-closed it suffices to show $\sigma \cF(\var{x})$ is truncation-closed for each reindexing $\sigma: \var{x} \to \var{y}$.
                
        Every $\sigma$ can be written as a composition $\iota \cdot \sigma_0 \cdots \sigma_{k-1}$ where $\iota$ is an injection and each $\sigma_i$ identifies only two distinct variables. Since injective reindexings send truncation-closed subsets to truncation-closed subsets, the proof of the Lemma boils down to observing that if $f\in \cF(\var{x}, \var{y}, \var{z})$, $\var{x}, \var{y} \in \Var$, and $\var{z}\subseteq \Var\setminus \{\var{x}, \var{y}\}$, then $f | \{\fM \cdot \var{x}^\beta\var{y}^\gamma\var{z}^\delta: \gamma+\beta< \alpha\} \in \cF(\var{x}, \var{y}, \var{z})$. 
	\end{proof}
\end{lemma}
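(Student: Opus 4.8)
The plan is to build the algebra generated by $\cF$ in two controlled steps — closing each $\cF(\var{x})$ under the $\bK$-algebra operations, and then closing the whole family under reindexings — and to check that neither step destroys truncation-closedness. As a preliminary reduction I would assume that each $\cF(\var{x})$ is already a truncation-closed $\bK$-algebra: since $\var{x}^\Lambda=\prod_i\var{x}_i^\Lambda$ is a finite product of linearly ordered groups, Lemma~\ref{lem:alg_generated_by_tc} says that replacing $\cF(\var{x})$ by the $\bK$-algebra it generates keeps it truncation-closed, and this replacement does not change the algebra generated by $\cF$, since any algebra (in the sense of Definition~\ref{def:algebras&languages}) that contains $\cF$ already contains, componentwise, the $\bK$-algebras these generate. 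From this point on every $\cF(\var{x})$ is a truncation-closed $\bK$-algebra, hence by Remark~\ref{rmk:partial_tc} closed under \emph{arbitrary} segments — and it is this segment-closure, rather than bare truncation-closure, that the argument really uses.

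With the reduction in place, the algebra generated by $\cF$ is the family $\cG$ with $\cG(\var{x})$ the $\bK$-subalgebra of $\bK\Lhp\var{x}^\Lambda\Rhp$ generated by $G(\var{x}):=\bigcup\{\sigma\cF(\var{y}) : \var{y}\subseteq\Var\text{ finite},\ \sigma:\var{y}\to\var{x}\text{ a reindexing}\}$. This $\cG$ is componentwise a $\bK$-algebra by construction, and it is a language because reindexings are unital $\bK$-algebra homomorphisms respecting summable sums, hence commute with the formation of generated subalgebras: $\sigma\cG(\var{y})=\langle\sigma G(\var{y})\rangle\subseteq\langle G(\var{x})\rangle=\cG(\var{x})$ for every reindexing $\sigma:\var{y}\to\var{x}$. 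Since $\cG(\var{x})=\langle G(\var{x})\rangle$, another application of Lemma~\ref{lem:alg_generated_by_tc} reduces the problem to showing that $G(\var{x})$ is truncation-closed; and since a union of truncation-closed sets is truncation-closed, it is enough to show that $\sigma\cF(\var{y})$ is truncation-closed for a single reindexing $\sigma:\var{y}\to\var{x}$.

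I would then factor $\sigma$ as an injection composed with finitely many \emph{elementary merges}, i.e.\ surjections identifying exactly two variables and fixing the rest. Because the image of a truncation-closed $\bK$-algebra under a $\bK$-algebra homomorphism is again a $\bK$-algebra — to which Remark~\ref{rmk:partial_tc} again applies — it suffices to handle these two kinds of reindexings separately. An injective reindexing identifies $\bK\Lhp\var{x}^\Lambda\Rhp$ with the Hahn subring of series supported on the monomials in the image variables, and carries segments to segments, so it preserves segment-closure and hence truncation-closedness. For an elementary merge $\mu$ identifying a variable $\var{v}$ with a variable $\var{w}$, and $f$ in a truncation-closed $\bK$-algebra $X\subseteq\bK\Lhp(\var{w},\var{v},\var{z})^\Lambda\Rhp$: for any monomial $\var{w}^\nu\var{z}^\delta$ one checks that $\mu(f)\,|\,\var{w}^\nu\var{z}^\delta=\mu\big(f\,|\,T\big)$, where $T=\{\var{w}^\alpha\var{v}^\beta\var{z}^\epsilon : (\alpha+\beta,\epsilon)<(\nu,\delta)\}$, and $T$ is a segment of $(\var{w},\var{v},\var{z})^\Lambda$ because it is the preimage of a segment under the monotone exponent-addition map $(\alpha,\beta,\epsilon)\mapsto(\alpha+\beta,\epsilon)$. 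As $X$ is closed under segments, $f\,|\,T\in X$, hence $\mu(f)\,|\,\var{w}^\nu\var{z}^\delta\in\mu X$, so $\mu X$ is truncation-closed.

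I expect the elementary-merge step to be the main obstacle: the key point is that cutting $\mu(f)$ at a monomial corresponds, on the source side, to cutting $f$ along a \emph{segment} $T$ of $(\var{w},\var{v},\var{z})^\Lambda$ that is in general \emph{not} a truncation, because $T$ is a fibre-type set of the exponent-addition map; it is precisely here that one needs Remark~\ref{rmk:partial_tc} to upgrade truncation-closure of the $\bK$-algebra $X$ to closure under all segments. Everything else — the precise identification of the generated algebra, the summability of non-injective reindexings, and the injective case — is routine bookkeeping, and those are also where the two preliminary invocations of Lemma~\ref{lem:alg_generated_by_tc} are used.
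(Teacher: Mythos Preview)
Your proposal is correct and follows essentially the same route as the paper: reduce via Lemma~\ref{lem:alg_generated_by_tc} to the case where each $\cF(\var{x})$ is a truncation-closed $\bK$-algebra, factor an arbitrary reindexing into an injection followed by elementary merges, and handle the merge case by recognising that a truncation of $\mu(f)$ pulls back to a \emph{segment} (not a truncation) of $f$, which lies in $\cF$ by Remark~\ref{rmk:partial_tc}. Your version is slightly more careful than the paper's in one respect: you explicitly note that after closing under reindexings one must apply Lemma~\ref{lem:alg_generated_by_tc} a second time (since $\bigcup_\sigma \sigma\cF(\var{y})$ need not itself be a $\bK$-algebra when $\cF$ is not already a language), whereas the paper tacitly identifies the generated algebra with the generated language.
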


\subsection{Rings with powers and ordered semirings}\label{ssec:rings_with_powers}
Although Theorem~\ref{introthm:A} will be used in the case when $\bK$ is a power bounded o-minimal structure with field of exponents from $\Lambda$, we carry on the treatment in a slightly greater generality, so that the definitions always include the case in which the semiring of exponents $\Lambda$ is given by the smeiring of natural numbers $\bN$ or by the ring of integers $\bZ$.

This subsection is dedicated to introduce some conditions on the semiring $\Lambda$ and the $\Lambda$-module $\fM$, which will be required to define powers of exponent $\lambda \in \Lambda$ of series in $\bK \Lhp \fM \Rhp$. We will often point to \cite{heibsch1998semirings} for some facts on monoids, semirings and their orderings, \cite{fuchs1963partially} is also a standard reference.

\begin{definition}
    Recall the following definitions:
    \begin{enumerate}
        \item A (unital) \emph{semiring} $(\Lambda, +, \cdot, 0,1)$ is a structure such that $(\Lambda, +, 0)$ and $(\Lambda, \cdot, 1)$ are monoids, $(\Lambda, +, 0)$ is commutative, and $\cdot$ distributes on both sides over $+$, i.e.\ $x(y+z)=xy+ xz$ and $(y+z)x=yz+yx$ for all $x,y,z \in \Lambda$; a semiring $\Lambda$ is said to be:\begin{enumerate}
            \item \emph{commutative} when $(\Lambda, \cdot ,1)$ is commutative;
            \item \emph{additively cancellative} when $(\Lambda,+,0)$ is cancellative;
            \item \emph{cancellative} when $(\Lambda, +, 0)$ and $(\Lambda, \cdot, 1)$ are both cancellative.
        \end{enumerate} 
        For basic observations around these notions we refer to \cite[Ch.~I]{heibsch1998semirings}, where however semirings are not necessarily assumed to have a $0$ and a $1$.
        We will abbreviate commutative additively cancellative semiring by \emph{c.a.c.\ semiring} and cancellative semiring by \emph{c.c.\ semiring}.
        \item An \emph{ordered} semiring is a semiring endowed with a partial order $\le$, such that $x\le y \Rightarrow z+x\le z+y$ for all $x,y,z$ and $x\le y \Rightarrow \big(x\cdot z \le y\cdot z \;\; \&\;\; z\cdot x \le z \cdot y)$ for all $x,y,z$ with $z\ge 0$ (cf \cite[Ch.~III, Def.~2.1]{heibsch1998semirings}).
        \item An ordered semiring $(\Lambda, +, \cdot, 0,1, \le)$ is \emph{cone-ordered}, if the underlying additive monoid $(\Lambda, +, 0, \le)$ is positively cone-ordered.
        \item Given a semiring $\Lambda$, a \emph{$\Lambda$-action} on a commutative monoid $(\fM, \cdot,1)$ is a semiring homomorphsim $\varphi: \Lambda \to \End(\fM, \cdot)$. Recall that when the monoid is written multiplicatively, the standard notation for the trivial action $\id: \End(\fM, \cdot) \to \End(\fM,\cdot)$, which we will adopt in such case, is the power notation, i.e.\ we write $\fm^h$ for the image of $\fm \in \fM$ by $h \in \End(\fM, \cdot)$.
        \item If $\Lambda$ and the commutative monoid $(\fM, \cdot, 1)$ are both ordered, then we say that an action $\varphi: \Lambda \to \End(\fM,\cdot)$ is \emph{ordered} when $\varphi(h): \fM \to \fM$ is order preserving for all $h \in \Lambda^{\ge0}$.  
        \item A commutative monoid $\fM$ with a $\Lambda$-action is called \emph{$\Lambda$-module}, an ordered commutative monoid $\fM$ with an \emph{ordered} $\Lambda$-action is called an \emph{ordered $\Lambda$-module}.
    \end{enumerate}
\end{definition}

\begin{remark}[Modules of differences]\label{rmk:modules_of_differences}
    If $\Lambda$ is a semiring $M=\big(M, +, 0, \le, \varphi: \Lambda \to \End(M, +,0)\big)$ is an (additively written) $\Lambda$-module, then the group of differences $M-M$ has a unique $\Lambda$-action $\bar{\varphi}:\Lambda \to \End(M-M, +, 0)$ making $\iota: M \to M-M$ into a $\Lambda$-module morphism: this is formally given setting $\bar{\varphi} (\lambda) (m-n) = \varphi(\lambda)(m)-\varphi(\lambda)(n)$ for all $m, n \in M$.
    If furthermore $\Lambda$ is an ordered semiring and $M=\big(M, +, 0, \le, \varphi: \Lambda \to \End(M, +,0)\big)$ is an ordered $\Lambda$-module then the action $\bar{\varphi}$ is ordered.
\end{remark}

\begin{remark}[Additively cancellative semirings and rings]\label{rmk:semirings_vs_rings}
    If $(\Lambda, +, \cdot, 0, 1)$ is an additively cancellative semiring, then the group of differences $(\Lambda-\Lambda, +,0)$ of the additive c.c.\ monoid $(\Lambda, +, 0)$ carries a natural product (still denoted by $-\cdot-$) making it into a ring and extending $-\cdot-$ along the natural embedding $\iota: \Lambda \hookrightarrow \Lambda-\Lambda$: namely one sets $(-\gamma+\alpha)\cdot(-\delta+\beta)\coloneqq -(\gamma \beta+\alpha\delta) + (\alpha\beta+\gamma \delta)$ (cf \cite[Ch.~II, Thmm.~5.8 and 5.11]{heibsch1998semirings}). The resulting structure $(\Lambda-\Lambda, +,\cdot, 0,1)$ is called ring of differences and is the reflection of $\Lambda$ onto the category of rings, i.e.\ it has the universal property that for all semiring homomorphisms $f: \Lambda \to R$ where $R$ is a ring, there is a unique ring homomorphism $\bar{f}: \Lambda-\Lambda \to R$ s.t.\ $\bar{f} \circ \iota=f$.
    
    In $\le$ is an order on $\Lambda$ which makes it an ordered semiring, then the induced group order $\le^g$ on $(\Lambda-\Lambda, +, 0)$ (see Remark~\ref{rmk:ccmonoids_vs_groups}) doesn't need to make $(\Lambda-\Lambda, +, \cdot, 0)$ into an ordered ring. In fact $(\Lambda-\Lambda, +, \cdot, 0, 1, \le^g)$ will be an ordered ring if and only if $(\Lambda, +, \cdot, 0, 1, \le)$ satisfies
    \[\forall \alpha, \beta, \gamma, \delta,\; (\gamma \le \alpha\; \&\; \delta\le\beta) \rightarrow \exists \eta, \theta,\;
    \left\{\begin{aligned}
    \alpha \beta +\gamma \delta + \eta \ge \gamma \beta + \alpha \delta + \eta\\
    \beta \alpha + \delta \gamma + \theta \ge \beta \gamma + \delta \alpha + \theta
    \end{aligned}\right.\]
    see \cite[Ch.~III, Thm.~4.3]{heibsch1998semirings} for more details.
    Thus the notion of an ordered additively cancellative semiring is slightly more general than the one of a subsemiring of some ordered ring even when the underlying additive monoid $(\Lambda, +, 0, \le)$ is order-cancellative.
    Finally we note that when $\le_g$ is a ring-order, the natural ordered semiring homomorphism 
    \[\iota: (\Lambda, +, \cdot, 0, 1) \to (\Lambda-\Lambda, +, \cdot, 0,1, \le^g)\]
    is the reflection of $\Lambda$ in the category of ordered rings.
\end{remark}

\begin{remark}[Cone-ordered semirings]\label{rmk:cone-ordered_semirings}
    In the situation of Remark~\ref{rmk:semirings_vs_rings}, if $\Lambda$ is cone-ordered, then it is additively order-cancellative and furthermore the condition to have $\le^g$ be a ring order is satisfied. Thus much as for commutative monoids, a positively cone-ordered semiring is an ordered semiring that can be realized as a subsemiring of an ordered ring containing the positive cone of the ring.
\end{remark}

\begin{remark}\label{rmk:semiring_to_ring_action}
    If $\Lambda$ is a semiring, $M$ is an Abelian group and $\varphi: \Lambda \to (M,+, 0)$ is an action of $\Lambda$ on $M$, then there is a unique extension of $\varphi$ along the canonical $\iota: \Lambda \to \Lambda-\Lambda$, i.e.\ there is a unique ring homomorphism $\bar{\varphi}: (\Lambda-\Lambda) \to \End(M, +, 0)$ such that $\bar{\varphi}\circ \iota=\varphi$ (this is a consequence of the universal property of the ring of differences).
    Similarly if $\Lambda$, $M$, and $\varphi$ are ordered, and $(\Lambda-\Lambda, +, \cdot, 0,1, \le^g)$ is an ordered ring, the action $\bar{\varphi}$ will be ordered as well.
\end{remark}

\begin{definition}[Rings with powers]
    Let $\Lambda$ be a semiring. A \emph{$\Lambda$-powering} on a ring $(R, +, \cdot, 0,1)$ is a $\Lambda$-module structure $\pi: \Lambda \to \End(R^{\bullet\pi}, \cdot)$ on some multiplicative submonoid $R^{\bullet\pi}$ of $R$ called monoid of \emph{$\pi$-powerable elements} and the action is written in power notation $(\lambda, r) \mapsto r^{\pi(\lambda)}$. 
    
    A \emph{ring with $\Lambda$-powers} is a ring $R$ endowed with a $\Lambda$-powering $\pi$ (formally it is the pair $(R, \pi)$) and a \emph{ring with powers} is a triple $(R, \Lambda, \pi)$ where $R$ is a ring, $\Lambda$ is a semiring and $\pi$ is a $\Lambda$-powering of $R$.
    
    When the powering $\pi$ is clear from the context, we suppress the $\pi$ from the notation, so we write $R^\bullet$ for the $\pi$-powerable elements and $r^\lambda$ for $r^{\pi\lambda}$. Likewise sometimes we will say \emph{powerable} or \emph{$\Lambda$-powerable} for $\pi$-powerable.

    A ring with \emph{internal $\Lambda$-powers} is a ring with $\Lambda$-powers $R$ further endowed with a semiring homomorphism $\iota: \Lambda \to R$ (the \emph{internalization map}).
    A \emph{ring with internal powers} is then a quadruple $(\bK, \Lambda, \pi, \iota)$ where $\pi$ is a $\Lambda$-powering of $\bK$ and $\iota: \Lambda \to \bK$ is a semiring homomorphism.
    
    As usual we often omit the $\iota$ map when writing expressions for elements in $R$.
\end{definition}

\begin{example}\label{ex:powerings}
The following are natural examples of powering on rings. 
\begin{enumerate}
	\item every ring $R$ has a unique natural $\bN$-powering $\pi_0$ such that $R^{\pi_0 \bullet}=R$;
	\item every ring $R$ has a unique $\bZ$-powering $\pi_1$ such that $R^{\pi_1\bZ}=R^*$ is the group of units of $R$;
	\item a real closed field $K$ has a unique $\bQ$-powering $\pi_2$ s.t.\ $K^{\pi_2\bullet}=K^{>0}$.
\end{enumerate}
All of the examples of $\Lambda$-powerings above have an obvious internalization map.
\end{example}

For the purpose of the subsequent construction we will need the following condition on a c.a.c.\ semiring $\Lambda$.

\begin{definition}
    If $\Lambda$ is a commutative ring and $\Lambda_0$ is a subring, I will say that $\Lambda_0$ \emph{has binomial sequences in $\Lambda$} if for all $\lambda \in \Lambda_0$ and all $n \in \bN$ there is a \emph{unique} $x\coloneqq \binom{\lambda+n-1}{n} \in \Lambda$ such that $n!x=\prod_{i<n}(\lambda+i)$.
    
    If $\Lambda_0$ is just a subsemiring of $\Lambda$, I will say it \emph{has binomial sequences in $\Lambda$} if $\Lambda_0-\Lambda_0$ has binomials equences in $\Lambda$. 
    When $\Lambda$ is a c.a.c.\ semiring, we will say it has binomial sequences if it has binomial sequences in $\Lambda-\Lambda$.
\end{definition}

The Lemmas and Corollaries in the reminder of this subsection are quite simple and mostly well known, we include references for the ones we could find one for, but also the ones for which a reference is missing are unlikely to be original to the author, given they are quite basic.

\begin{lemma}\label{lem:binomial_existence}
    Let $\Lambda$ be a commutative ring and $\Lambda_0\subseteq \Lambda$ a subring. The following are equivalent:
    \begin{enumerate}
        \item for all $x \in \Lambda_0$, and $n \in \bN$, there is $y \in \Lambda$ such that $n!y=\prod_{i<n}(x-i)$;
        \item for all prime natural numbers $p$, and all $x \in \Lambda_0$, $x^p-x \in p\Lambda$ (i.e. $(\Lambda_0+p\Lambda)/p\Lambda$ is a $p$-Boolean algebra).
    \end{enumerate}
    \begin{proof}
        $(2) \Rightarrow (1)$ Note that letting $P$ denote the set of prime natural numbers we have
        \[n!=\prod_{p \in P} p^{v(p,n)} \quad \text{where} \quad v(p,n)=\sum_{i>0} \lfloor n/p^i\rfloor\]
        and the a priori infinite product has actually only finitely many non-one factors. Also note that given any $m \in \bN$ and $p \in P$ not dividing $m$ (in $\bZ$) we have $m\Lambda+ p^k\Lambda=\Lambda$ for every $k \in \bN$. Thus for any $n=\prod_{p \in S} p^{k_p}$ we have $n\Lambda=\bigcap_{p \in S}(p^{k_p}\Lambda)$ and we can reduce to prove that
        \begin{equation}\label{eq:goal0}
        \prod_{i<n} (x+i) \in p^{v(p,n)}\Lambda \qquad \text{for all} \quad p \in P,\; x \in \Lambda_0.   
        \end{equation}
        
        For this observe that (2) implies that for all $k \in \bN$, $\prod_{i<p^k}(x+i) \in p^k \Lambda$. For $k=1$ this follows from the familiar fact that $x^p-x-\prod_{i<p}(x+i)\in p\Lambda$. For larger $k$, we can derive the claim from the case $k=1$ applied to each element of $\{x+jp: j<p^{k-1}\}\subseteq \Lambda_0$.
        Thus for a given $p \in P$, $k\in \bN$, and $x \in \Lambda_0$
        \[|\{x+i : 0 \le i < n\} \cap p^k\Lambda|\ge \lfloor n/p^k\rfloor,\]
        from which we derive the validity of (\ref{eq:goal0}).

        $\lnot(1) \Rightarrow \lnot(2)$ Suppose that there is a $p\in P$ and $x \in \Lambda_0$ such that $x^p-x \notin p \Lambda$, then since $\prod_{i<p}(x+i)-x^p+x \in p\Lambda$ we have $\prod_{i<p}(x+i) \notin p \Lambda$.
    \end{proof}
\end{lemma}

\begin{corollary}\label{cor:binomial_char}
    Let $(\Lambda, +, \cdot, 0,1)$ be a commutative ring and $\Lambda_0$ be a subsemiring. Then $\Lambda_0$ has binomial sequences in $\Lambda$ if and only if the Abelian group $(\Lambda, +,0)$ has no torsion and for all prime numbers $p$ and all $x \in \Lambda_0$, $x^p-x \in p\Lambda$.
    \begin{proof}
        By Lemma~\ref{lem:binomial_existence} the only thing that needs to be proven is that if $\Lambda$ allows binomial sequences, then $(\Lambda, +, 0)$ has no torsion. For this suppose that there is $y \in \Lambda^{\neq0}$ is such that $py=0$ for some prime number $p$. Then for any $\lambda\in \Lambda_0$ the equation in $\var{x}$, $p!\var{x}=\prod_{i<p}(\lambda+i)$ cannot have a unique solution, because if $x$ is a solution, then so is $x+y$.
    \end{proof}
\end{corollary}

\begin{lemma}\label{lem:binom_semiring_powers_of_series}
    If $\Lambda$ is a c.a.c.\ semiring ring having binomial coefficients and $R=\Lambda-\Lambda$, then
    \[(\lambda, 1+h) \mapsto (1+h)^\lambda\coloneqq \sum_{n\in \bN}\binom{\lambda}{n}h^n\]
    defines a semiring action of $(\Lambda, +, \cdot, 0,1)$ on the monoid $\big(1+\var{x}R[\![\var{x}]\!], \cdot, 1\big)$.
    \begin{proof}
        It is not hard to compute that for the definition above $(1+h)^0=1$ and $(1+h)^1=1+h$. So we need to show that $(1+h)^{\lambda}(1+h)^\mu=(1+h)^{\lambda \mu}$ and $\big((1+h)^{\lambda}\big)^\mu=(1+h)^{\lambda\mu}$.
        Also note that it is enough to prove the statement for $h=\var{x}$ as precomposing with $h$ is an endomorphism of the semiring $(\Lambda[[\var{x}]], +, \cdot, 0, 1)$ and thus a fortiori an endomorphsim of the monoid $\big(1+\var{x}\Lambda[[\var{x}]], \cdot, 1\big)$.
        
        Note that from the hypothesis on $\Lambda$ it follows that $\Lambda[[\var{x}]]$ is a subsemiring of $R[[\var{x}]]$ where $R=(\Lambda-\Lambda)\otimes \bQ$, so the validity of $(1+\var{x})^{\lambda}(1+\var{x})^\mu=(1+\var{x})^{\lambda \mu}$ and $\big((1+\var{x})^{\lambda}\big)^\mu=(1+\var{x})^{\lambda\mu}$ in $\Lambda[[x]]$ would follow from the validity of the same identities in $R[[\var{x}]]$. Thus in proving the thesis, we can further reduce to the case in which $\Lambda=R$ is a $\bQ$-algebra.

        In such a case the validity of the claim can be derived by observing that the usual Taylor expansions of $\exp$ and $\log$, give isomorphisms $\log:1+\var{x}\Lambda[[\var{x}]] \leftrightarrow \var{x}\Lambda[[\var{x}]]:\exp$. So $(\lambda, 1+h) \mapsto \exp(\lambda\log(1+h))$ yields a $\Lambda$-action. But then, since in $\bQ[\var{y}][[\var{x}]]$ we have
        \[\exp(\var{y}\log(1+\var{x}))=\sum_{m\in \bN} \frac{\var{x}^m}{m!}\prod_{i<m} (\var{y}-i),\]
        it follows that $\exp(\lambda\log(1+\var{x}))=(1+\var{x})^\lambda$ as defined above; in particular $(\lambda, 1+h)\mapsto (1+h)^\lambda$ defines a $\Lambda$-action.
    \end{proof}
\end{lemma}

When $\Lambda$ is an ordered c.a.c.\ semiring with binomial sequences and $(\bK, \pi, \iota)$ is a ring with internal $\Lambda$-powers, we can naturally define a $\Lambda$-powering and an internalization map on any ring of series $\bK \Lhp \fM \Rhp$ where $\fM$ is an ordered cancellative $\Lambda$-module.

\begin{lemma}\label{lem:powers_for_series}
    If $\Lambda$ is a c.a.c.\ ordered semiring with binomial sequences, $(\bK, \pi, \iota)$ is a ring with internal $\Lambda$-powers, and $\fM$ is an ordered cancellative $\Lambda$-module, then $\bK\Lhp \fM \Rhp$ has itself a natural structure $(\tilde{\pi}, \tilde{\iota})$ of ring with internal $\Lambda$-powers, where:
    \begin{enumerate}
        \item the powerable elements are given by
        \[\bK\Lhp \fM\Rhp^{\tilde{\pi}\bullet}\coloneqq \{k\fm (1+h): k \in \bK^{\bullet\pi}, \fm \in \fM, h \in \bK \Lhp \fM^{<1}\Rhp\};\]
        \item the $\Lambda$-powers of a series $f=k\fm(1+h) \in \bK\Lhp \fM\Rhp^{\tilde{\pi}\bullet}$ are given by
        \[f^\lambda=f^{\tilde{\pi}\lambda}\coloneqq k^\lambda\fm^\lambda \sum_{n} h^n\iota\binom{\lambda}{n} \quad \text{for all}\; \lambda \in \Lambda.\]
        \item the internalization map $\tilde{\iota}$ is just the composition of $\iota$ with the natural inclusion $\bK \subseteq \bK \Lhp \fM \Rhp$.
    \end{enumerate}
    \begin{proof}
        Note that in (1), the fact that the normal series with powerable leading coefficients precisely correspond to the series of the form $k \fm (1+h)$ with $k \in \bK^{\bullet}$, $\fm \in \fM$, $h \in \bK \Lhp \fM^{<1}\Rhp$ relays on the hypothesis that $\fM$ is negatively cone-ordered.
        Note that that in (2), the series $(1+h)^\lambda$ is well defined by Lemma~\ref{lem:binom_semiring_powers_of_series} and because $(h^n)_{n \in \bN}$ is summable by Corollary~\ref{cor:neumann}.
        The only things left to be checked are that the powerable series are a multiplicative monoid and that $(\lambda, k\fm (1+h))\mapsto k^\lambda \fm^\lambda (1+h)^\lambda$ is an action of $\Lambda$, which readily follow from Lemma~\ref{lem:binom_semiring_powers_of_series}.
    \end{proof}
\end{lemma}

\begin{remark}
    Note that if $\Lambda=\bZ$, $\bK$ is a domain and it is given the natural $\bZ$-powering of Example~\ref{ex:powerings}(2), (so $\bK^{\bullet}=\bK^*$), then Remark~\ref{rmk:units} implies that $\bK\Lhp \fM \Rhp^{\bullet}=\bK \Lhp \fM \Rhp^*$.
\end{remark}

\begin{remark}
    Note that in Lemma~\ref{lem:powers_for_series}, if $\fM$ is cone-ordered, then $\bK\Lhp \fM \Rhp$ consists exactly of the normal series with powerable leading coefficient.
\end{remark}

\begin{remark}\label{rmk:comment_on_ring_or_discrete}
    At some point below and in the next subsections (Definition~\ref{def:serially_adequate} and \ref{def:formal_derivatives}) we will also require from the c.a.c.\ semiring $(\Lambda, +, \cdot, 0,1)$ that it satisfies $1+\Lambda\supseteq \Lambda^{\neq 0}$.
    This really is better seen as a disjunction of two conditions:
    \begin{enumerate}
        \item either $1+\Lambda =\Lambda$ in which case $-1 \in \Lambda$ and thus $\Lambda=\Lambda-\Lambda$ is a ring;
        \item or $1+\Lambda=\Lambda^{\neq0}$ in which case we have for all $n \in \bN$, that $n+\Lambda=\Lambda \setminus \{i\in \bN: i<n\}$ from which it follows that for every $x \in \Lambda \setminus \bN$, $x+\bZ \subseteq \Lambda$.
    \end{enumerate}
    It is worth observing that in case (2), in the difference ring $R=\Lambda-\Lambda$ we have $\Lambda \cap (-\Lambda)=0$, so the algebraic preorder on $R$, given by $y\ge x\Leftrightarrow y-x\in \Lambda$ is in fact an order (possibly partial), thus case (2) just amounts to $\Lambda$ being the positive cone of a discreetly ordered ring.
    Also, the condition $1+\Lambda\supseteq \Lambda^{\neq 0}$ is quite independent from the requirement of allowing binomial coefficients.
\end{remark}

Under some additional assumptions on the ring with internal $\Lambda$-powers $\big(\bK, \pi: \Lambda \to \End(\bK^{\bullet}, \cdot), \iota: \Lambda \to \bK\big)$ we can slightly extend the $\Lambda$-powering on rings of series given in Lemma~\ref{lem:powers_for_series}.

\begin{definition}\label{def:serially_adequate}
    A ring with internal powers $(\bK, \Lambda, \pi, \iota)$ will be said to be \emph{serially adequate} if $\bK^\bullet$ contains no zero-divisor, $\Lambda$ allows binomial coefficients and one of the two conditions hold:
    \begin{enumerate}
        \item $1+\Lambda \supseteq \Lambda^{\neq 0}$;
        \item all elements of $\bK^\bullet$ are units of $\bK$.
    \end{enumerate}
    Note that by Remark~\ref{rmk:comment_on_ring_or_discrete}, the condtion ``(1) or (2)'' above can be reformulated as ``$1+\Lambda = \Lambda ^{\neq0}$ or all elements of $\bK^\bullet$ are units of $\bK$''.
\end{definition}

\begin{corollary}\label{cor:powers_for_series}
    If $(\bK, \Lambda, \pi, \iota)$ is serially adequate, then we can extend the powering on $\bK \Lhp \fM \Rhp$ given in Lemma~\ref{lem:powers_for_series} by defining the set of $\Lambda$-powerable elements as
    \[\bK \Lhp \fM \Rhp^{\bullet}\coloneqq \{\fm(k+ h): k \in \bK^{\bullet}, \fm \in \fM, h \in \bK \Lhp \fM^{<1} \Rhp\}\]
    and setting
    \[\big(\fm(k+h)\big)^{\lambda}\coloneqq\fm^{\lambda} \sum_{n} \iota\binom{\lambda}{n} h^n k^{\lambda-n}.\]
    \begin{proof}
        Since by the hypothesis $\bK$ \emph{embeds} into the localization of $\bK$ by $\bK^\bullet$, passing to $\bE\Lhp \fM \Rhp^\bullet$ we can reduce to the case in which all elements of $\bK^\bullet$ are invertible. In such a case we have that the definition of $\bK\Lhp \fM\Rhp^\bullet$ and of powers match the one of Lemma~\ref{lem:powers_for_series} so we can conclude.
    \end{proof}
\end{corollary}

\subsection{\texorpdfstring{$\Lambda$}{Λ}-powerings and derivatives of \texorpdfstring{$\Lambda$}{Λ}-power series}\label{ssec:ders}


In order to define the formal derivatives of a $\Lambda$-power series $f$ in some variables $\var{x}$ we generally need some conditions on the semiring $\RExp(f, \var{x})$ (which are always satisfied when $\Lambda=\bN$ or when the variables $\var{x}$ are classical in $f$). Such an assumption is not necessary to define the renormalized formal derivatives.

\begin{definition}[Formal derivatives]\label{def:formal_derivatives}
    If $\var{x}$ is a single variable, $\var{y}$ is a set of variables not containing $\var{x}$, $f \in \bK \Lhp \var{x}^\Lambda \var{y}^{\Lambda} \Rhp$, and $\RExp(\var{x}, f)\setminus \{0\}\subseteq 1+\Lambda$, then $\partial_{\var{x}}f$ denotes the \emph{formal derivative} of $f$ in $\var{x}$ which is defined as follows. 
    If $f= \sum_{\alpha, \beta} f_{\alpha, \beta} \var{x}^\alpha \var{y}^\beta$ with $f_{\alpha, \beta} \in \bK$, we set
	\[\partial_\var{x} f := \sum_{\alpha, \beta} c_{\alpha, \beta} \alpha \var{x}^{\alpha-1} \var{y}^\beta=\sum_{\alpha, \beta} c_{\alpha,\beta} \iota(\alpha) \var{x}^{\alpha-1} \var{y}^\beta.\]
    Note that the hypothesis $\RExp(\var{x}, f)\setminus \{0\}\subseteq 1+\Lambda$ implies that $\iota(\alpha) \var{x}^{\alpha-1} \in \iota(\Lambda) \var{x}^\Lambda$ because if $\alpha \in \RExp(\var{x},f)$ and $\alpha \notin 1+\Lambda$, then $\alpha=0$. 
    Similarly from $h \in \bN$, $\partial_\var{x}^h f$ denotes the $n$-th order derivative of $f$ in the variable $\var{x}$.
    
    More generally if $\var{x}=\{\var{x}_0, \ldots, \var{x}_{m-1}\}$ is a finite set of variables and $f(\var{x}) =\sum_{\alpha\in \Lambda^\var{x}}f_\alpha \var{x}^\alpha \in \bK \Lhp\var{x}^\Lambda\Rhp$ and $\RExp(x,f) \setminus \{0\} \subseteq 1+\Lambda$ we will adopt the following notation for iterated partial derivatives: if $h \in \bN^{\var{x}}$ is the assignment $\var{x}_i \mapsto h_i \in \bN$, then we set
    \[\partial^hf\coloneqq\partial_{\var{x}_0}^{h_0} \cdots \partial_{\var{x}_{m-1}}^{h_{m-1}} f = \sum_{\alpha \in \Lambda^{\var{x}}} f_{\alpha} \cdot (\alpha)_{h} \cdot \var{x}^{\alpha-h},\]
    where $\alpha \in \Lambda^\var{x}$ and $h\in \bN^{\var{x}}$ are subtracted component-wise and $(\alpha)_h$ is the product of falling factorials
    \[(\alpha)_h= \prod_{i<m} \prod_{j<h_i} (\alpha(\var{x}_i)-j).\]

    If $\RExp(f, \var{x})$ has binomial sequences in $\Lambda-\Lambda$ and $\RExp(f, \var{x})\setminus \{0\} \subseteq 1+\Lambda$, we can also define the formal \emph{Hasse-Schmidt} derivative of multi-index $h\in \bN^{\var{x}}$ 
    which is instead defined as
    \[\HSder^h(f)=\sum_{\alpha \in \Lambda^\var{x}} f_\alpha \cdot \iota \binom{\alpha}{h} \var{x}^{\alpha-h}.\]
    Note the relation $h! \HSder^hf = \partial^h f$ where $h!\coloneqq\prod_{j<m} h_j!$, and note that if $\bK$ has positive characteristic it might be that $h!=0$ in $\bK$.
    We extend the notations $\partial^h$ and $\HSder^h$ to the case $h \in \bN^{\var{y}}$ for $\var{y}\subseteq \var{x}$, in the obvious way: namely we regard $f$ as an element of $\bE\Lhp \var{y}^{\Lambda}\Rhp$ with $\bE=\bK\Lhp (\var{x}\setminus \var{y})^\Lambda\Rhp$.

    Finally for $n\in \bN$ and $\var{y}$ a single variable, simliarly to the notation $\partial_\var{x}^n(f)$, we write $\HSder^n_\var{y}(f)$ for $\HSder^{[\var{y}\mapsto n]}(f)$. Thus for $\var{x}=\{\var{x}_0, \ldots, \var{x}_{m-1}\}$ and $h\in \bN^\var{x}$ the assignment $\var{x}_i \mapsto h_i$ we can write $\HSder^h(f)=\HSder_{\var{x}_0}^{h_0} \cdots \HSder_{\var{x}_{m-1}}^{h_{m-1}}(f)$.
\end{definition} 

As anticipated, defining the renormalized formal derivatives of any order requires less control on the supports of the series.

\begin{definition}[Renormalized formal derivatives]\label{def:rn_formal_derivatives}
The renormalized formal derivative of order $h \in \bN^\var{x}$ of a series $f=\sum_{\alpha \in\Lambda^{\var{x}}} f_\alpha \var{x}^\alpha$ with $f_\alpha \in \bK$ is given by
\[\var{x}^h\partial^hf=\sum_{\alpha \in\Lambda^{\var{x}}} f_\alpha \cdot (\alpha)_h \cdot \var{x}^{\alpha}.\]
Note that we consider it to be defined also when $\partial^hf$ might not be defined.

Similarly when $\RExp(f, \var{x})$ has binomial sequences in $\Lambda$, we consider the renormalized Hasse-Schmidt derivative of order $h\in \bN^{\var{x}}$
\[\var{x}^h\HSder^h(f)=\sum_{\alpha \in \Lambda^\var{x}} f_\alpha \cdot \binom{\alpha}{h} \cdot \var{x}^{\alpha},\]
to be defined also when the condition $\RExp(f, \var{x})\setminus \{0\} \subseteq 1+ \Lambda$ is not satisfied.
\end{definition}

\begin{remark}\label{rmk:ring_rnder-closed}
    If $\cA\subseteq \bK \Lhp \fM \times \var{x}^\Lambda \Rhp$ is a subring, then it is closed under renormalized derivatives of any order if and only if it is closed under renormalized derivatives of order $1$ in all variables. In fact assuming $\var{x}$ is a single variable we have $\var{x}^{n+1}\partial_\var{x}^{n+1}(f)=\var{x}\partial_{\var{x}}(\var{x}^n\partial_{\var{x}}^n(f))-n\var{x}^n$.
\end{remark}

\begin{lemma}\label{lem:mon-div_closure_preserves_almostHSder-closed}
    Let $\Lambda$ be a c.a.c.\ ordered semiring with binomial coefficients and suppose that $\cA\subseteq \bK\Lhp \fM \times \var{x}^\Lambda\Rhp$ is a $(\Lambda-\Lambda)$-submodule closed under taking renormalized Hasse-Schmidt derivatives. Then the smallest set containing $\cA$ and closed under monomial division, is still closed under taking renormalized Hasse-Schmidt derivatives.
    \begin{proof}
        Suppose that $\var{z}\in \var{x}$, $\alpha \in \Lambda$, and $f=\var{z}^\alpha g\in \cA$ with $g$ non-singular. It suffices to show that then for all $n \in \bN$, $\var{z}^\alpha\var{z}^n\HSder^n_{\var{z}}(g)\in \cA$. This can be seen by induction: it is trivial for $n=0$ and the inductive step is dealt with observing that 
        \[\begin{aligned}
            \cA \ni \var{z}^n \HSder_{\var{z}}^n(f)= \var{z}^{n}\HSder_{\var{z}}^n(\var{z}^\alpha g) = \sum_{k} \binom{n}{k}\var{z}^{n-k}\HSder_{\var{z}}^{n-k}(\var{z}^\alpha) \var{z}^k\HSder_{\var{z}}^k(g)= \\
            =\var{z}^\alpha\sum_{k} \binom{n}{k}\binom{\alpha}{n-k} \var{z}^k \HSder_{\var{z}}^k(g) = \var{z}^\alpha\var{z}^n \HSder^n_{\var{z}}(g) + \sum_{k<n} \binom{n}{k}\binom{\alpha}{n-k} \var{z}^\alpha\var{z}^k \HSder_{\var{z}}^k(g),
        \end{aligned}\]
        and noting that by the inductive hypothesis the rightmost sum in the last expression is in $\cA$.
    \end{proof}
\end{lemma}

\begin{lemma}\label{lem:leibniz_for_rn}
    If $\Lambda$ be a c.a.c.\ ordered semiring with binomial coefficients and suppose that $\cA\subseteq \bK\Lhp \fM \times \var{x}^\Lambda\Rhp$ is a subset closed under renormalized Hasse-Schmidt derivatives, then the subring generated by $\cA$ is closed under renormalized Hasse-Schmidt derivatives.
    \begin{proof}
        The subgroup generated by $\cA$ will still be closed because renormalized Hasse-Schmidt derivative opeartors are $\bK\Lhp \fM \Rhp$-linear. The closure under renormalized Hasse Schmidt derivatives of the generated subring then follows from the identity
        \[\var{z}^n\HSder^n_{\var{z}}(fg)=\sum_{k}\binom{n}{k}\cdot \var{x}^{n-k}\HSder_{\var{z}}^{n-k}(f)\cdot \var{x}^k\HSder_{\var{z}}^{k}(g),\]
        valid for all $n \in \bN$ and $\var{z} \in \var{x}$.
    \end{proof}
\end{lemma}

\subsection{Interpretations and compositions of \texorpdfstring{$\Lambda$}{Λ}-power series}\label{ssec:compositions}
Throughout this subsection $(\bK, \Lambda, \pi, \iota)$ will be assumed to be a serially adequate commutative ring with internal powers.
In order to interpret $\Lambda$-power series as functions on $\bK \Lhp \fM \Rhp$ we will add another assumption on $\fM$ and $\Lambda$: namely that $\fM$ and $\var{x}^{\Lambda}$ satisfy the property (\ref{eq:ppty-adequate}) given in Remark~\ref{rmk:monomial_substitution}.

\begin{definition}\label{def:adequate}
    An ordered semiring $\Lambda$ is an \emph{adequate semiring of exponents} if $\Lambda$ allows binomial coefficients and satisfies
    \[\forall \lambda\in \Lambda^{>0}, \lambda\cdot -: \Lambda \to \Lambda \text{ is one-to-one}\]
    We then call a $\Lambda$-module $\fM$ an \emph{adequate $\Lambda$-module of monomials} if it is cancellative and satisfies condition (\ref{eq:ppty-adequate}). We will say that the ring with internal powers $(\bK, \Lambda, \pi, \iota)$ is \emph{adequate} if it is serially adequate and $\Lambda$ is an adequate semiring of exponents.
    
    On such $(\bK, \Lambda, \pi, \iota)$ we consider the ``extended'' $\Lambda$-powering given in Corollary~\ref{cor:powers_for_series}, for which the powerable elements are
    \[\bK\Lhp \fM \Rhp^\bullet=\{\fm (k+\varepsilon): \varepsilon \in \bK \Lhp \fM \Rhp^{\prec 1}, k \in \bK^\bullet, \fm \in \fM\}.\]
\end{definition}

\begin{remark}
    Note that a product of adequate $\Lambda$-modules is still adequate.
\end{remark}

\begin{definition}[Interpretations]
\label{defn:genps_interpretation}
	Suppose that $(\bK, \Lambda, \pi, \iota)$ is an adequate ring with powers and $\fM$ is an adequate $\Lambda$-module of monomials. We call the powerable infinitesimal series of $\bK\Lhp \fM \Rhp$ \emph{formally $\Lambda$-composable} and write their set as $\bK\Lhp \fM \Rhp^{\square}\coloneqq \bK \Lhp \fM \Rhp^{\bullet}\cap \bK \Lhp \fM\Rhp^{\prec 1}$.

    Let $m,n \in \bN$ and $\var{x}=\{\var{x}_0, \ldots, \var{x}_{n-1}\}, \var{y}=\{\var{y}_0, \ldots \var{y}_{m-1}\}$ be finite disjoint subsets of $\Var$ of sizes $n$ and $m$ respectively. Let $f \in \bK\Lhp \fM \times \var{x}^\Lambda \var{y}^\Lambda\Rhp$ be classical in $\var{y}$ so that $f$ has the form
    \[f=\sum_{\substack{i \in \bN^{\var{x}}\\\alpha \in \Lambda^{\var{y}}}} f_{i, \alpha}\var{x}^{n} \var{y}^{\alpha} \quad \text{with}\; f_{i, \alpha}\in \bK \Lhp \fM\Rhp\; \text{and}\; \bigcup_{\substack{i \in \bN^{\var{x}}\\\alpha \in \Lambda^{\var{y}}}} \Supp(f_{n, \alpha}) \; \text{Noetherian}.\]

    Given such a series $f$ we can interpret it as a $\bK \Lhp\fM \Rhp$-valued function on the set of assignements $\big(\bK\Lhp \fM \Rhp^{\prec 1}\big)^{\var{x}} \times \big(\bK \Lhp \fM \Rhp^{\square}\big)^{\var{y}}$, by setting for $g \in \big(\bK\Lhp \fM \Rhp^{\prec 1}\big)^{\var{x}}$ and $h \in \big(\bK \Lhp \fM \Rhp^{\square}\big)^{\var{y}}$

    \[f(g,h)\coloneqq \sum_{\substack{i \in \bN^{\var{x}}\\\alpha \in \Lambda^{\var{y}}}} f_{i, \alpha} g^i h^\alpha = \sum_{\substack{i \in \bN^{\var{x}}\\\alpha \in \Lambda^{\var{y}}}} \sum_{j \in \bN^{\var{y}}} \iota \binom{\alpha}{j} f_{n, \alpha}g^i k^{\alpha-j}\fm^\alpha \varepsilon^j,\]
    where $k \in (\bK^\bullet)^{\var{y}}$, $\fm \in (\fM^{<1})^{\var{y}}$ and $\varepsilon \in \big(\bK \Lhp \fM\Rhp^{\prec 1}\big)^{\var{y}}$ are such that for component-wise operations $h=\fm(k+\varepsilon)$. 

    The sum is well defined and the equality with the rightmost expression holds because because by Corollary~\ref{cor:neumann} and Remark~\ref{rmk:monomial_substitution}, the family $\big(f_{\gamma, i}\fm^\gamma\varepsilon^jg^i: i \in\bN^{\var{x}}, j \in \bN^{\var{y}}, \gamma \in \Lambda^{\var{x}}\neq 0\big)$ is summable.
			
	Given $X \subseteq \bK \Lhp \fM \Rhp$, we will denote by $\langle X \rangle_\cF$, the $\cF\cup \{+, \cdot\}$-substructure of $\bK \Lhp \fM \Rhp$ generated by $X$ where the symbols in $\cF$ are interpreted as above.
\end{definition}

\begin{remark}
    Note that in the situation above, if say $\var{y}_0$ also happens to be classical in $f$, then the above definition gives also an interpretation of $f$ as function on $\big(\bK\Lhp \fM \Rhp^{\prec 1}\big)^{\var{x} \cup\{ \var{y_0}\}}\times \big(\bK \Lhp \fM \Rhp^{\square}\big)^{\var{y} \setminus \{\var{y_0}\}}$. This new interpretation agrees with the previous interpretation on $\big(\bK\Lhp \fM \Rhp^{\prec 1}\big)^{\var{x}} \times \big(\bK \Lhp \fM \Rhp^{\square}\big)^{\var{y}}\subseteq \big(\bK\Lhp \fM \Rhp^{\prec 1}\big)^{\var{x} \cup\{ \var{y_0}\}}\times \big(\bK \Lhp \fM \Rhp^{\square}\big)^{\var{y} \setminus \{\var{y_0}\}}$.
\end{remark}

\begin{remark}\label{rmk:on_terminology}
    When $\bK$ is a field, the use of the term \emph{restricted} for what we call weakly restricted $\Lambda$-power series is consistent with its use in rigid geometry.
    
    By \cite[Lem.~5.23 and Thm.~5.29]{freni2023vector} the summability of the family $\big(f_{i, \alpha} g^i h^\alpha: i \in \bN^{\var{x}}, \alpha \in \Lambda^\bN\big)$ is equivalent to the convergence of the net of finite partial sums when $\bK \Lhp \fM \Rhp$ is given the $\bK$-linear topology described in \cite[Sec.~5.2]{freni2023vector}.
        
    Thus in particular, When $\Lambda=\bN$, and $\fM$ is a totally ordered group (whence $\bK \Lhp \fM \Rhp$ is naturally a valued field whose valuation ring is $\bK \Lhp \fM\Rhp^{\preceq 1}$ and whose valuation ideal is $\bK \Lhp \fM \Rhp^{\preceq 1}$), the weakly restricted power series are precisely those that converge on the infinitesimal ball. It is worth mentioning though that if $\fM$ is not isomorphic to $\bZ$, then the $\bK$-linear topology considered here on $\bK \Lhp \fM \Rhp$ would not be the valuation topology. We won't make any use this observation.
\end{remark}

\begin{remark}[Compositions]\label{rmk:compositions}
    Note the special case of Definition~\ref{defn:genps_interpretation} when $\fM=\fN \times \var{z}^\bN\var{t}^\Lambda$ for some disjoint sets of variables $\var{z}, \var{t}$ and some ordered cancellative $\Lambda$-module $\fN$. Given $f \in \bK \Lhp \fN \times\var{x}^{\bN}\var{y}^\Lambda\Rhp \cong \bK \Lhp \fN \times \{1\} \times \var{x}^{\bN} \var{y}^\Lambda\Rhp \subseteq \bK \Lhp \fM \times \var{x}^\bN\var{y}^\Lambda\Rhp$, $g \in \big(\bK\Lhp \fN \times \var{z}^\bN\var{t}^\Lambda\Rhp^{\prec 1}\big)^{\var{x}}$, and $h \in \big(\bK\Lhp\fN \times \var{z}^\bN\var{t}^\Lambda\Rhp^{\square}\big)^{\var{y}}$ we get a composition $f(g,h)\in \bK \Lhp \fN \times \var{z}^\Lambda\var{t}^\bN\Rhp$.
    This has the property that for all $(z,t) \in \big(\bK \Lhp \fN \Rhp^{\prec 1}\big)^{\var{z}} \times \big(\bK \Lhp \fN \Rhp^{\square}\big)^{\var{t}}$, $(f(g,h))(z,t)=f(g(z,t), h(z,t))$. 
\end{remark}

\begin{remark}[Blow-ups]\label{rmk:blow-ups}
    Reindexings are special examples of composition.
    Another special example of composition is given by \emph{blow-ups}, i.e.\ compositions with powerable series of the form $\var{z}\var{t}$ and $\var{z}(k+\var{t})$ where $k \in \bK^\bullet$ and $\var{z}, \var{t}$ are variables (not necessarily distinct).
    Given $n \in \bN$, a set $\var{x}=\{\var{x}_0, \ldots \var{x}_{n-1}\}\subseteq \Var$ of $n$ distinct variables, a finite $\var{y} \subseteq \Var\setminus \var{x}$, and $f = \sum_{\alpha, \beta}f_{\alpha, \beta}\var{x}^\alpha\var{y}^\beta \in \bK\Lhp \fM \times \var{x}^\Lambda\var{y}^\Lambda\Rhp$ with $f_{\alpha, \beta}\in \bK \Lhp \fM \Rhp$, we note that the blow-ups of $f$ have the following forms
	\[\begin{aligned}
	    &f[\var{x}_i\mapsto \var{z}_i(k_i+\var{t}_i)]_{i<n} =  \sum_{\alpha, \beta, m} f_{\alpha, \beta} k^{\alpha-m} \binom{\alpha}{m} \var{z}^\alpha \var{t}^m \var{y}^\beta =\\
        &=\sum_{m\in \bN^{\var{x}}} (\var{x}^m[\var{x}_i \mapsto t_i]_{i<n})\cdot \big((\HSder^mf)[\var{x}_i \mapsto k_i\var{z}_i]_{i<n}\big)\\
        &f[\var{x}_i \mapsto\var{z}_i \var{t}_i]_{i<n} =  \sum_{\alpha, \beta, m} f_{\alpha, \beta} \var{z}^\alpha \var{t}^\alpha \var{y}^\beta,
	\end{aligned}\]
    where for all $i<n$, $k_i\in\bK^\bullet$, $\var{z}_i, \var{t}_i \in\Var$.
    Note that composing with a blow-up of the form $\var{z}(k+\var{t})$ where $k \in \bK^\bullet$ and $\var{z} \neq \var{t}$, yields a series in which the variable $\var{t}$ is classic.
    Also note that if $f$ as above lies in $\bK \Lhp \fM \times \var{x}^\Lambda \var{y}^\bN\Rhp$, $h \in \big(\bK \Lhp \fN \Rhp^{\prec 1}\big)^\var{y}$, and $g=[\var{x}_i\mapsto k_i\fn_i(1+\varepsilon_i)]_{i<n} \in \big(\bK \Lhp \fN \Rhp^{\square}\big)^\var{x}$, with $k_i \in \bK^\bullet$, $\fn_i \in \fN$, and $\varepsilon_i \in \bK \Lhp \fN \Rhp^{\prec1}$ for all $i<n$, then the composition $f(g,h)$ can be written as
    \[f(g,h)=\big(f(b, h)\big)[\var{z}_i\mapsto \fn_i, \var{t}_i \mapsto \varepsilon_i]_{i<n},\]
    where $b$ is the blow-up $[\var{x}_i \mapsto \var{z}_i(k_i+\var{t}_i)]_{i<n}$.
\end{remark}

\begin{remark}
    If $\fM$ is totally cone-ordered, then $\bK\Lhp \fM\Rhp^\bullet=\{f \in \bK\Lhp \fM \Rhp: \lc(f)\in \bK^\bullet\}$, in particular if $\bK$ is ordered and $\bK^\bullet=\bK^{>0}$, then $\bK\Lhp \fM\Rhp^\bullet=\bK\Lhp \fM \Rhp^{>0}$ so $\bK\Lhp \fM \Rhp^{\square}$ consists of the positive infinitesimals.
\end{remark}

\subsection{Closure properties and main results}\label{ssec:mainres_tc_statements}
Throughout this subsection $\Lambda$ will be a c.a.c.\ ordered semiring, $\fM$ will be a multiplicatively written cancellative $\Lambda$-module, and $(\bK, \pi, \iota)$ will be a commutative ring with internal $\Lambda$-powers. Before stating the main result we define and recollect from previous sections some closure properties.

\begin{definition}\label{defn:closure_ppties}
    Let $\cA(\var{x})$ be a subset of $\bK \Lhp \fM \times \var{x}^\Lambda\Rhp$ for some finite $\var{x} \subseteq \Var$.
    \begin{enumerate}[label = (\arabic*), ref = (\arabic*)]
        \item \label{psFppties:tc} $\cA(\var{x})$ is \emph{truncation-closed} if whenever $f \in \cA(\var{x})$ all truncations of $f$ are in $\cA(\var{x})$;
        \item \label{psFppties:ders_c} $\cA(\var{x})$ is \emph{$\HSder$-closed for the classical variables} if whenever $\var{y}\subseteq \var{x}$, $m \in \bN^\var{y}$, and $f \in \cA(\var{x}) \cap \bK \Lhp \fM \times \var{y}^\bN(\var{x}\setminus \var{y})^\Lambda\Rhp$, we have $\HSder^m(f) \in \cA(\var{x})$;
        \item \label{psFppties:md} $\cA(\var{x})$ is \emph{closed under monomial division}, if for all $\fm\in \fM, \var{x}^\alpha\in \var{x}^\Lambda$ and $f \in \bK\Lhp \fM \times \var{x}^\Lambda\Rhp^{\preceq1}$, if $\fm \var{x}^\alpha f \in \cF(\var{x})$ then $f \in \cF(\var{x})$ and $\fm\var{x}^\alpha \in \cF$.
    \end{enumerate}
    When $\Lambda$ has binomial sequences we also consider:
    \begin{enumerate}[label = (\arabic*), ref = (\arabic*), resume]
	    \item \label{psFppties:der} $\cA(\var{x})$ is \emph{almost $\HSder$-closed}
        (resp.\ \emph{almost derivation-closed}) if it is closed under renormalized Hasse-Schmidt derivatives of any index, i.e.\ for all $m \in \bN^{\var{x}}$, if $f \in \cA(\var{x})$, then $\var{x}^m\HSder^m(f) \in \cA(\var{x})$  (resp.\ if it is closed under renormalized formal derivatives i.e.\ for every $f \in \cA(\var{x})$, and $\var{x}_i \in \var{x}$, the series $\var{x}_i\partial_{\var{x}_i}(f)$ is in $\cA(\var{x}))$.
    \end{enumerate}
    We will extend the terminology of the list above also to the case when $\cF := \big(\cF(\var{x})\big)_{\var{x} \subseteq \Var}$ is a family of weakly restricted $\Lambda$-power series with coefficients from $\bK\Lhp \fM \Rhp$, convening that $\cF$ has a property when $\cF(\var{x})$ has it for all finite $\var{x}\subseteq \Var$. On $\cF$ we will also consider the following properties:
	\begin{enumerate}[label = (\arabic*), ref = (\arabic*), resume]
        \item \label{psFppties:scoperad} $\cF$ is \emph{closed under classical compositions} if it contains $\Var$ and for all finite disjoint sets of variables $\var{x}, \var{y}, \var{z}\subseteq \Var$, $\var{x} \subseteq \cF(\var{x})$ and for all $f \in \cF(\var{x}, \var{y}) \cap \bK \Lhp \fM \times \var{x}^\bN\var{y}^\Lambda\Rhp$, and for all $g \in \big(\cF(\var{z})^{\prec 1}\big)^{\var{x}}$, $f(g, \var{y})\in \cF(\var{y},\var{z})$;
	    \item \label{psFppties:implicit} $\cF$ has \emph{implicit functions} if for all $\var{x}\in \Var$, finite $\var{y} \subseteq \Var\setminus \{\var{x}\}$, and $f\in \cF(\var{x}, \var{y}) \cap \bK \Lhp \fM \times \var{x}^\bN\var{y}^\Lambda\Rhp^{\prec 1}$, such that $(\partial_\var{x} f)[\var{x}\mapsto 0]$ is a unit of $\bK \Lhp \fM \times \var{y}^\bN\Rhp^{\preceq 1}$, there is a (necessarily unique) $g \in \cF(\var{x})$, such that $f[\var{x}\mapsto g]=0$.
    \end{enumerate}
    When $(\bK, \Lambda, \pi, \iota)$ is adequate and $\fM$ is an adequate $\Lambda$-module of monomials we will also consider:
    \begin{enumerate}[label = (\arabic*), ref = (\arabic*), resume]
	    \item \label{psFppties:bu} $\cF$ is \emph{$S$-blow-up closed} (for $S \subseteq \bK^\bullet$ a submonoid) if it is closed under composing with series of the form $\var{z}_0(k+\var{z}_1)$ for $k \in S$ and $z_0z_1$ for $\var{x} \in\Var$, $\var{y} \subseteq \Var$, see Remark~\ref{rmk:blow-ups};
	    \item \label{psFppties:soperad} $\cF$ is \emph{closed under compositions} if it contains $\Var$ (empty compositions) and for all finite disjoint sets of variables $\var{x}, \var{y}, \var{z}\subseteq \Var$, $\var{x} \subseteq \cF(\var{x})$ and for all $f \in \cF(\var{x}, \var{y}) \cap \bK \Lhp \fM \times \var{x}^\bN\var{y}^\Lambda\Rhp$ and all $g \in \big(\cF(\var{z})^{\prec 1}\big)^{\var{x}}$, $h\in \big(\cF(\var{x})^{\square}\big)^{\var{y}}$, $f(g, h)\in \cF(\var{z})$;
    \end{enumerate}
    We will say that $\cF$ is \emph{almost fine} if it is truncation closed and almost $\HSder$-closed. We will say that $\cF$ is \emph{fine} if it is almost fine, closed under monomial division and contains $\fM$ and $\Var^{(\Lambda)}$.
\end{definition}

\begin{remark}
    By Remark~\ref{rmk:ring_rnder-closed}, if $\cA$ is almost derivation closed, then it is closed under taking all renormalized derivatives. In particular if $\cF$ is a $\bQ$-algebra then it is almost derivation closed if and only if it is almost $\HSder$-closed.
\end{remark}

\begin{lemma}\label{lem:almost_HS-closed_considerations}
    Let $(\bK, \Lambda, \pi, \iota)$ be an adequate ring with internal powers, $\fM$ an adequate $\Lambda$-module of monomials and $\cF$ a $(\Lambda-\Lambda)$-algebra of $\Lambda$-power series with coefficients from $\bK\Lhp \fM \Rhp$. Suppose furthermore that $\Lambda$ is cone ordered and $\fM$ is negatively cone-ordered.
    \begin{enumerate}
        \item If $\cF$ is $\{1\}$-blow-up closed, truncation-closed, closed under monomial division, and contains $\fM$ and $\Var^{(\Lambda)}$, then $\cF$ is almost $\HSder$-closed (so it is fine).
        \item If $\cF$ is fine, then it is $\HSder$-closed for the classical variables.
    \end{enumerate}
    \begin{proof}
        (1) Given $f$, we can write $f = \fm \var{x}^{\alpha} g$ for some non-singular $g$. 
        Now 
        \[\var{x}^n\HSder^n(f)=\fm \var{x}^n\HSder^n(\var{x}^\alpha g)=\fm\sum_k \binom{n}{k} \binom{\alpha}{n-k}\var{x}^{\alpha}\var{x}^k\HSder^{k}(g)\] 
        so it suffices to show that $\var{x}^\alpha\var{x}^k\HSder^{k}(g)\in \cF$ for all $k$. But $\var{z}^k\var{x}^k\HSder^k(g)\in \cF$ because it is a segment of $g[\var{x} \mapsto \var{x}+\var{x}\var{z}]$, so by monomial division, since $\var{x}^k\HSder^k(g)$ is non-singular, we get $\var{x}^k\HSder^k(g)\in \cF$ for all $k$.
        
        (2) Suppose that $\var{x}$ is classical in $f \in \cF(\var{x}, \var{y})$. We can write $f=\fm \var{y}^\alpha g$ for some non-singular $g$. Then note that $\HSder_{\var{x}}^n(g) \in \cF(\var{x}, \var{y})$ because $\cF$ is closed for monomial division and $\var{x}^n\HSder_{\var{x}}^n(g)\in \cF$ by almost $\HSder$-closedness of $\cF$. Thus we have $\HSder_\var{x}^n(f)=\fm \var{y}^\alpha\HSder_\var{x}^n(g)\in \cF$. 
    \end{proof}
\end{lemma}

\begin{remark}
	If $\cF$ is a $(\Lambda-\Lambda)$-algebra and satisfies any of the properties of the points \ref{psFppties:tc}, \ref{psFppties:der}, and \ref{psFppties:bu}, then the smallest $(\Lambda-\Lambda)$-algebra containing $\cF$ and closed under monomial division is still a $(\Lambda-\Lambda)$-algebra satisfying the same property.
	This is clear for \ref{psFppties:tc}.

    As for \ref{psFppties:der}, for almost $\HSder$-closedness, this follows from Lemmas~\ref{lem:mon-div_closure_preserves_almostHSder-closed}, \ref{lem:md-closed_gen_subring}, and \ref{lem:leibniz_for_rn}.
    In the case $\cF$ is almost derivation-closed, Lemmas~\ref{lem:md-closed_gen_subring} and \ref{lem:leibniz_for_rn}, can be replaced by simpler similar arguments.
    
	As for \ref{psFppties:bu} note that $f[\var{x}\mapsto \var{z_0}(\var{z}_0+k)]\in \var{y}^\beta \bK \Lhp \fM \times \var{z}_0^\Lambda\var{z}_1^\Lambda\var{y}^\Lambda\Rhp^{\preceq 1}$ if and only if
    $f\in \var{y}^\beta\bK\Lhp \fM \times\var{x}^{\Lambda}\var{y}^\Lambda\Rhp^{\preceq 1}$, and that $f[\var{x}\mapsto \var{z_0}(\var{z}_0+k)]\in \var{z}_0^\alpha \bK \Lhp \fM \times \var{z}_0^\Lambda\var{z}_1^\Lambda\var{y}^\Lambda\Rhp^{\preceq 1}$ if and only if 
    $f \in \var{x}^\alpha \bK \Lhp \fM \times \var{x}^\Lambda \var{y}^\Lambda\Rhp^{\preceq 1}$.
\end{remark}

We are ready to sate the main results of this section

\begin{theorem}\label{thm:mainT0}
    Suppose that $\cF$ is a family of weakly restricted $\Lambda$-power series with coefficients from $\bK\Lhp \fM \Rhp$ which is truncation-closed and $\HSder$-closed for the classical variables.
    Then:
    \begin{enumerate}
        \item the smallest $\bZ$-algebra $\cF^*$ containing $\cF$ and closed under classical compositions is truncation closed;
        \item the smallest $\bZ$-algebra $\cF^{**}$ containing $\cF$ and closed under classical composition and implicit functions is truncation closed.
    \end{enumerate}
    Furthermore assuming $\Lambda$ and $\fM$ are cone ordered, if $\cF$ is closed for monomial division, then so are $\cF^*$ and $\cF^{**}$.
\end{theorem}

\begin{theorem}\label{thm:mainT}
    Suppose that $(\bK, \Lambda, \pi, \iota)$ is an adequate ring with powers and $\fM$ is an adequate $\Lambda$-module of monomials. Suppose also that $\fM$ and $\Lambda$ are both cone-ordered. Let $\cF$ be family of weakly restricted $\Lambda$-power series with coefficients from $\bK\Lhp \fM \Rhp$ which is truncation-closed, almost $\HSder$-closed and closed under monomial division.
    Then:
    \begin{enumerate}
        \item the smallest $\bK$-algebra containing $\cF\cup (\Var^{(\Lambda)}\fM)\cup\{(k+\var{x})^\lambda: k \in \bK^\bullet, \var{x}\in \Var, \lambda \in \Lambda\}$ and closed under compositions is truncation closed and closed for monomial division;
        \item the smallest $\bK$-algebra containing $\cF\cup (\Var^{(\Lambda)}\fM)\cup\{(k+\var{x})^\lambda: k \in \bK^\bullet, \var{x}\in \Var, \lambda \in \Lambda\}$ and closed under composition and implicit functions is truncation closed and closed for monomial division.
    \end{enumerate}
\end{theorem}

\begin{remark}[About the setting of Theorem~\ref{introthm:B}]
    The hypothesis of Theorem~\ref{introthm:B} in the introduction are a restatement of the ones of  Theorem~\ref{thm:mainT}. Indeed if \ref{hypothesis:monomials_setting} and \ref{hypothesis:coefficients_setting} hold, then $(\bK, \Lambda, \pi, \iota)$ is adequate, $\fM$ is an adequate $\Lambda$-module of monomials, and both $\Lambda$ and $\fM$ are cone-ordered. On the other hand if $\Lambda$ is a cone ordered semiring, then as said in Remark~\ref{rmk:cone-ordered_semirings}, $R^{\ge 0}\subseteq \Lambda \subseteq R$ for some ordered ring $R$, furthermore since then $R=\Lambda-\Lambda$, $\Lambda$ having binomial sequences means $R$ having those. Similarly since $\fM$ is negatively cone ordered, it embeds into the group $\fN=\fM^{-1}\fM$ with the natural order in such a way that it contains $\fN^{\le 1}$ and by Remarks~\ref{rmk:modules_of_differences} and \ref{rmk:semiring_to_ring_action}, $\fN$ is naturally an $R$-module.
\end{remark}

\begin{remark}\label{rmk:ring-simplied_hypthoesis}
    When $\Lambda$ is a ring, the hypothesis of Theorem~\ref{thm:mainT0}, simplify considerably: in fact then $\Lambda$ and $\fM$ are necessarily cone-ordered and any $\bK$-algebra containing $\fM\Var^{(\Lambda)}$ will be closed under monomial division and its closure under compositions will need to contain $\{(k+\var{x})^\lambda: k \in \bK^\bullet, \lambda \in \Lambda, \var{x} \in\Var\}$.
\end{remark}

We will prove Theorem~\ref{thm:mainT0} in the next subsection. At the end of this subsection we will give the proof of Theorem~\ref{thm:mainT} assuming Theorem~\ref{thm:mainT0}. Before that we point out some Corollaries: the first is a slight generalization of Theorem~\ref{introthm:A} (just note that if $\cF$ is a family of $\Lambda$-power series closed under truncations, then it is truncation closed qua family of weakly restricted $\Lambda$-power series with coefficients from $\bK \Lhp \fM \Rhp$); the second and third are results from the literature that are recovered from (and generalized by) Theorem~\ref{thm:mainT}.

\begin{corollary}\label{cor:mainT}
    Let $\Lambda$ and $\fM$ be as in Theorem~\ref{thm:mainT0}. Suppose that $\cF$ is a family of weakly restricted $\Lambda$-power series with coefficients from $\bK\Lhp \fM \Rhp$ which is closed under truncations and almost closed under derivatives, then for all truncation closed $X\subseteq \bK\Lhp \fM \Rhp$, the set $\langle X \cup \fM \cup \bK\rangle_\cF$ is truncation closed.
    \begin{proof}
        Apply Theorem~\ref{thm:mainT}(1) to the family $\cF\cup X$
    \end{proof}
\end{corollary}

\begin{corollary}[van den Dries, {\cite[Thm.~1.3]{dries2014truncation}}]\label{cor:dries_composition}
    If $\bK'$ is a $0$-characteristic field, $(\fM, \cdot, <)$ is totally ordered group, $X \subseteq \bK' \Lhp \fM \Rhp$ is a truncation-closed subfield, and $\cG$ is a family of power series with coefficients in $\bK'$ closed under formal derivatives, then the smallest subfield of $\bK'\Lhp \fM \Rhp$ containing $X$ and closed under series in $\cG$ is itself closed under infinite sums.
    \begin{proof}
        Apply Theorem~\ref{thm:mainT0}(1) with $\Lambda=\bN$, $\bK$ the subfield of $\bK'$ generated by the coefficients of the series in $\cG$ and $\cF = \cG \cup X$.
    \end{proof}
\end{corollary}

\begin{remark}
    The main improvement of Theorem~\ref{thm:mainT}(1) over the previous result of van den Dries in the case $\bK$ is a $0$-characteristic field and $(\fM, \cdot, <)$ is a totally ordered group is that we are allowed to take $\cG$ to be a family of power series with coefficients from the Hahn field $\bK \Lhp \fM \Rhp$ itself, as long as they converge on the infinitesimal ball (i.e.\ are weakly restricted) and the family $\cG$ is also closed under coefficient-wise truncations.
\end{remark}
    
\begin{corollary}[Fornasiero {\cite[Thm.~4.15]{fornasiero2006embedding}}\footnote{Fornasiero's statment is actually more general than the one given here as it considers Hahn field with factor sets.}, van den Dries {\cite[Thm.~1.2(1)]{dries2014truncation}}]\label{cor:dries_fornasiero_composition}
    If $\bK$ is a field, $(\fM, \cdot, <)$ is totally ordered group and $X \subseteq \bK \Lhp \fM \Rhp$ is a truncation-closed subfield, then its relative Henselianization is closed under truncations.
    \begin{proof}
        Apply Theorem~\ref{thm:mainT0}(2) or Theorem~\ref{thm:mainT}(2) with $\Lambda=\bN$ and $\cF$ consisting of all the polynomials with coefficients in $X$, and note that the constants of the smallest $\bZ$-algebra containing $\cF$ and closed under composition and implicit functions is the relative Henselianization of $X$ in $\bK \Lhp \fM \Rhp$.
    \end{proof}
\end{corollary}

As explained in \cite[p.121]{fornasiero2006embedding} and \cite{dries2014truncation}, this result is in turn a generalization of previous result (namely that if $\bK$ is a real closed field and $\fM$ is a divisibile abelian group, then the real algebraic closure of a truncation closed subfield of $\bK\Lhp \fM \Rhp$ is truncation closed) first explicitly appearing in \cite[Lem.~3.5]{mourgues1993every} where it is credited to Delon (see subsequent remark in the same reference and \cite{delon1991indecidibilite}, cf also \cite{mourgues1993transfinite} for another proof).

We now turn to the deduction of Theorem~\ref{thm:mainT} from Theorem~\ref{thm:mainT0}. This will essentially amount to putting together a couple observations and a Lemma.

\begin{remark}\label{rmk:blowpups_pcomp}
    Note that the condition that $\cF$ is closed under $S$-blow-ups can be restated by saying that $\cF$ is closed under right-composition with the set of powerable infinitesimal polynomials with coefficients from the monoid $S\subseteq \bK^\bullet$, denoted by $S[\var{z}]$.
    This is because
    \[S[\var{z}]=\{\sigma p(\var{y}): \var{y} \subseteq \Var,\; \sigma : \var{y} \to \var{z},\; p \in P(\var{y})\}\]
    where $P(\var{y})$ is the set of compositions of polynomials of the form $\var{y}_0(k + \var{y}_1)$ with $k \in S \cup \{0\}$ and $\var{y}_0, \var{y}_1\in \var{y}$.
    In particular, given any $\bZ$-algebra $\cF$, the smallest $\bZ$-algebra containing $\cF$ and closed under blow-ups is given by 
	\[\cF_{b,S}(\var{z}) := \bZ[f(p): p \in S[\var{z}]^{\var{x}},\; f(\var{x}) \in \cF(\var{x})].\]
    The letter $b$ stands for ``blow-up''.
\end{remark}

\begin{remark}
    If $\cF$ is closed under monomial division and truncations, then whenever $\fm\var{x}^\alpha(k+\varepsilon) \in \cF$ for some $k \in \bK^\bullet$, $\fm \in \fM$, $\alpha \in \Lambda^{\var{x}}$ and $\varepsilon \in \bK\Lhp \fM \times \var{x}^\Lambda\Rhp^{\prec 1}$, we have $\varepsilon \in \cF$, $k \in \cF$, and $\fm\var{x}^\alpha \in \cF$.
\end{remark}

\begin{lemma}\label{lem:almostfine_bu}
	Suppose $\cF$ is an almost fine $\bZ$-algebra and $M \subseteq \bK^\bullet$ is a submonoid. Then $\cF_b\coloneqq \cF_{b,M}$ is a truncation-closed $\bZ$-algebra. If furthermore $\fM$ and $\Lambda$ are cone-ordered, and $\cF$ is a fine $\bZ$-algebra containing $(k+\var{x})^\lambda$ for all $\lambda \in \Lambda$ and $k \in M$, then $\cF_{b, M}$ is fine.
	\begin{proof}
		We first show that $\cF_b$ is truncation-closed under the assumption that $\cF$ is truncation-closed and almost $\HSder$-closed. For this, by Lemma~\ref{lem:tc_generates_tc}, it suffices to show that for each $p(\var{z}) \in M [\var{z}]^\var{x}$, $\cF(p(\var{z})):=\{f (p(\var{z})): f \in \cF(\var{x})\}$ is a truncation-closed $\bZ$-algebra.
            
        Since each component of such $p$ is a composition of of polynomials of the form $\var{z}_0(k+\var{z}_1)$ with $k \in M \cup \{0\}$, again invoking Lemma~\ref{lem:tc_generates_tc} we can restrict to the case in which $p$ is a tuple of polynomials of the form $\var{z}_0(k+\var{z}_1)$ with $k \in \bK^{\bullet} \cup \{0\}$, where $\var{z}_0$ and $\var{z}_1$ are distinct variables not appearing in $\var{x}$.
            

        To this end note that by Remark~\ref{rmk:partial_truncations}, it suffices to show that for $\var{t}\in \Var$, $\var{y} \subseteq \Var$, and $\var{t}\notin \var{y}$, if $f(\var{t}, \var{y}) \in \cF(\var{t}, \var{y})$, then $f(\var{z_0}(\var{z}_1+k), \var{y})|S \in \cF(\var{z}_0(\var{z}_1+k), \var{y})$ for $S$ of one of the forms
        \[S_0^{\alpha}=\fM  \times \var{z}_1^\Lambda \var{y}^\Lambda\{\var{z}_0^{\alpha'} :\alpha' < \alpha\},\quad
        S_1^{\beta}=\fM \times \var{z}_0^\Lambda\var{y}^\Lambda \{\var{z_1}^{\beta'}: \beta' < \beta\},\]
        \[\text{and} \quad 
        S_2=\fM\times \var{z}_0^\alpha \var{z_1}^\Lambda\{\var{y}^\gamma:\var{y}^\gamma \in S'\} \quad \text{where $S'$ is a segment of $\var{y}^\Lambda$.}\]
            
        If $S$ is of the last form $S_2$, there is nothing to prove as $f(\var{z}_0(\var{z}_1+k),\var{y})|S_2=h(\var{z}_0(\var{z}_1+k), \var{y})$ where $h(\var{t},\var{y})=f(\var{t}, \var{y})|(\fM \times \var{t}^\Lambda S')$.

        For $S$ of the first two forms we need to distinguish the two cases $k=0$ and $k\in S\setminus \{0\}$. If $k=0$ the statement is trivial as then in every monomial of $f(\var{z}_0\var{z}_1, \var{y})$, $\var{z}_0$ and $\var{z}_1$ appear with the same exponent hence,  \[f(\var{z}_0\var{z}_1, \var{y})|S_0^{\alpha}=f(\var{z}_0\var{z}_1, \var{y})|S_1^{\alpha}=h(\var{z}_0\var{z}_1, \var{y})\]
        where $h(\var{t}, \var{y}):=f(\var{t}, \var{y})|(\fM \times \var{y}^\Lambda\{\var{t}^{\alpha'}:\alpha'<\alpha\})$.
        Hence we can assume $k\in M\setminus \{0\}$. In such a case $\beta$ ranges in $\bN$ because $\var{z}_1$ is classical in $f(\var{z}_0(\var{z}_1+k), \var{y})$. We can then write
        \[f(\var{z}_0(\var{z}_1+k), \var{y})|S_1^n = \sum_{m<n} \var{z}_1^m h_m(k\var{z}_0, \var{y}), \quad \text{where}\quad  h_m(\var{t}, \var{y}) = \var{t}^m\HSder^m f(\var{t}, \var{y})\]
		Finally
		\[f(\var{z}_0(\var{z}_1+k), \var{y})|S_0^\alpha = h(\var{z}_0(\var{z}_1+k), \var{y}), \; \text{where} \;  h(\var{t}, \var{y})=f(\var{t}, \var{y})| (\fM \times \var{y}^\Lambda\{\var{t}^{\alpha'}:\alpha'<\alpha\}),\]
        which concludes the proof of truncation-closedness of $\cF_{b}$.

        In order to show that if $\cF$ is fine and contains $(k+\var{t})^\lambda$ for all $k \in M$, $\lambda \in \Lambda$, then $\cF_{b}$ is still closed under monomial division, by Lemma~\ref{lem:md-closed_gen_subring} it suffices to show that for $\var{x}=\{\var{t}\}\cup \var{y}$ for some $\var{t} \in \Var\setminus \var{y}$, $\var{z}=\{\var{t}, \var{v}\}\cup \var{y}$ with $\var{t}, \var{v} \in \Var\setminus \var{y}$ distinct, and $p\in M[\var{z}]^\var{x}$ of the form $[\var{t} \mapsto \var{t}(k+\var{v}), \var{y}\mapsto \var{y}]$ with $k \in M\cup \{0\}$, we have that the ring generated by $\cF(p(\var{z}))\cup \cF(\var{x})$ is closed for monomial division.
        

        For this observe that for $f \in \bK \Lhp \fM \times \var{x}^\Lambda\Rhp=\bK\Lhp \fM \times \var{t}^\Lambda \var{y}^\Lambda\Rhp$ and $\lambda \in \Lambda$, we have that:
        \begin{enumerate}
            \item $f[\var{t}\mapsto \var{t}(k+\var{v})] \in \var{t}^{\lambda}\bK \Lhp \fM \times \var{t}^\Lambda\var{v}^\Lambda\var{y}^\Lambda\Rhp^{\preceq 1}$ if and only if $f \in \var{t}^\lambda\bK \Lhp \fM \times \var{t}^\Lambda \var{y}^\Lambda\Rhp^{\preceq 1}$, so if $f\in \cF$ and $f[\var{t}\mapsto \var{t}(k+\var{v})] \in \var{t}^{\lambda}\bK \Lhp \fM \times \var{t}^\Lambda\var{v}^\Lambda\var{y}^\Lambda\Rhp^{\preceq 1}$, we can find $g\in \cF$ be such that $f=t^{\lambda} g$, and $f[\var{t}\mapsto \var{t}(k+\var{v})]= \var{t}^\lambda g[\var{t}\mapsto \var{t}(k+\var{v})] \cdot (k+\var{v})^\lambda$ where $(k+\var{v})^\lambda \in \cF(\var{x})$ and $g[\var{t}\mapsto \var{t}(k+\var{v})]\in \cF(p(\var{z}))$;
            \item if $f[\var{t}\mapsto \var{t}(k+\var{v})] \in \var{v}^{\lambda}\bK \Lhp \fM \times \var{t}^\Lambda\var{v}^\Lambda\var{y}^\Lambda\Rhp^{\preceq 1}$ then either $k=0$ and $f \in \var{t}^\lambda\bK \Lhp \fM \times \var{t}^\Lambda\var{y}^\Lambda\Rhp^{\preceq 1}$ or $f\in \bK \Lhp \fM \times \var{y}^\Lambda \Rhp$ and $\lambda=0$; it follows that if $f \in \cF$ and $f[\var{t}\mapsto \var{t}(k+\var{v})]= \var{v}^\lambda h$ with $h \in \bK \Lhp \fM \times \var{t}^\Lambda\var{v}^\Lambda\var{y}^\Lambda\Rhp^{\preceq 1}$, then $h \in \var{t}^\Lambda \cF(p(\var{z}))$. 
        \end{enumerate}
        Thus each element of the closure under monomial division of $\cF(p(\var{z}))$ is in the algebra generated by $\cF(p(\var{z}))\cup \cF(\var{x})$, which is therefore closed under monomial division by Lemma~\ref{lem:md-closed_gen_subring}. This concludes the proof of the claim that $\cF_{b}$ is closed for monomial division.


        Finally we see that if an algebra is closed for monomial division, truncations and blow-ups then it is in fact almost $\HSder$-closed by Lemma~\ref{lem:almost_HS-closed_considerations}(1).
	\end{proof}
\end{lemma}

For $\bK^\bullet$-blow-up-closed fine algebras, closing under compositions is the same as closing under classical compositions.

\begin{lemma}\label{lemma:comp_is_ccomp_for_fine_algebras}
    Let $\cF$ be a $\bK^\bullet$-blow-up closed fine algebra of restricted $\Lambda$-power series with coefficients from $\bK \Lhp \fM \Rhp$. Then its closure under classical compositions is closed under compositions.
    \begin{proof}
        Note that the closure of $\cF$ under classical compositions must be still closed under blow ups, so the statement boils down to the observation that if $\cF$ is closed under classical compositions, blow-ups, and monomial division, then it is closed under compositions. But this follows from Remark~\ref{rmk:blow-ups}.
    \end{proof}
\end{lemma}

\begin{proof}[Proof of Theorem~\ref{thm:mainT} from Theorem~\ref{thm:mainT0}]
    Up to passing to the smallest $\bK$-algebra containing $\cF\cup (\Var^{(\Lambda)} \times \fM) \cup\{(k+\var{x})^\lambda, k \in \bK^\bullet, \lambda \in \Lambda, \var{x} \in \Var\}$, we can assume that $\cF$ is a \emph{fine} $\bK$-algebra containing $(k+\var{x})^\lambda$ for all $k \in \bK^\bullet$, $\lambda \in \Lambda$, and $\var{x} \in \Var$.
    By Lemma~\ref{lem:almostfine_bu}, we have that $\cF_{b,\bK^\bullet}$ is still fine. Furthermore the smallest $\bK$-algebra containing $\cF$ and closed under compositions must clearly be closed under $\bK^\bullet$-blow-ups so it must contain $\cF_{b, \bK^\bullet}$.
    Thus we can conclude by Lemma~\ref{lem:almost_HS-closed_considerations}(3) and Theorem~\ref{thm:mainT0} applied to $\cF_{b, \bK^\bullet}$ and by Lemma~\ref{lemma:comp_is_ccomp_for_fine_algebras}.
\end{proof}

\subsection{Weakly restricted power series and main proofs}\label{ssec:weakly_restricted_power_series}
We fix throughout this section a commutative ring $\bK$ and an ordered commutative cancellative monoid $\fM$.
Theorem~\ref{thm:mainT0} will be deduced from its special case when $\Lambda=\bN$ with the usual order, in which case we call elements of $\bK\Lhp \fM \times \var{x}^\bN\Rhp$ \emph{weakly restricted power series} (thus dropping the ``$\bN$''): in fact as it will appear Theorem~\ref{thm:mainT0} essentially amounts to a packaging of its specialization to $\Lambda=\bN$ together with Lemma~\ref{lem:tc_generates_tc} and some syntactical manipulation.

\begin{notation}
    Recall Remark~\ref{def:weakly_restricted} implying that elements of $\bK\Lhp \fM \times \var{x}^\bN\Rhp$ can be regarded as power series
    \[f(\var{x})=\sum_{n \in \bN^{\var{x}}} f_n \var{x}^n\in \bK \Lhp\fM \Rhp [ \![\var{x}]\!]\]
    where $\Supp_\fM(f)=\bigcup_{n \in \bN^{\var{x}}}f_n$ is a Noetherian subset of $\fM$. We are going to use a special notation for segments of $f$ of the form $f|(S \times \var{x}^\bN)$ for some segment $S\subseteq \fM$, setting
    \[f\|S\coloneqq f|(S \times \var{x}^\bN)=\sum_{n \in \bN^\var{x}} (c_n|S) \var{x}^n.\]
    We will say that a subset $X$ of $\bK\Lhp \fM \times \var{x}^\bN\Rhp$ is \emph{closed under $\|$-truncations} if whenever $f \in X$, $f\|S \in X$ for all segments $S \subseteq \fM$.
\end{notation}

\begin{remark}\label{rmk:coefficient-wise_tc_is_tc}
    Note that if $\cF$ is a $\bZ$-algebra of weakly restricted power series with coefficients from $\bK \Lhp \fM \Rhp$ which is $\HSder$-closed, then it is necessarily closed under taking segments of the form $f\|(\fM \times S)$ where $S \subseteq \var{x}^\bN$ for $f \in \cF(\var{x})$. 
    To see it note that in fact by Remark~\ref{rmk:partial_truncations} we can reduce to the case in which $\var{x}$ is a single variable and $S=\{1, \var{x}, \ldots, \var{x}^{N-1}\}$ for some $N\in \bN$. Now if we write $f=\sum_{n\in \bN} \var{x}^n f_n$, then $f|(\fM \times \var{x}^S)=\sum_{n<N}f_n\var{x}^n$ so the claim follows form the fact that $\var{x}^nf_n= \var{x}^n\big((\HSder^n f)[\var{x}\mapsto 0]\big)=\var{x}^n\big((\HSder^n f)\|(\fM \times \{1\})\big)$.
    It follows that an $\HSder$-closed $\bZ$-algebra is truncation closed if and only if it is closed under $\|$-truncations.
\end{remark}

\begin{lemma}\label{lem:tc_product}
	If $\cL(\var{x}) \subseteq \bK \Lhp \fM \times \var{x}^{\bN} \Rhp$ is a subring (resp.\ subgroup, non-unital subring, closed under derivatives), then
	\[\cL^{\|}(\var{x})\coloneqq\{f \in \cL(\var{x}): \forall U \text{ final segment of } \fM, \; f\|U \in \cL(\var{x})\}\]
	is a subring (resp.\ subgroup, non-unital subring, closed under derivatives). 
	\begin{proof}
        If $\cL(\var{x})$ is closed under sums (or $R$-linear combinations for any $R \subseteq \bK$), then so is $\cL^{\|}(\var{x})$: this follows from the fact that $f \mapsto f\|U$ is $\bK$-linear for every $U \subseteq \fM$. Similarly for the closure under derivatives, as $-\|U$ commutes with formal derivatives.
                
		Thus to prove the Lemma it suffices to show that if $f\coloneqq\sum_m f_m \var{x}^m$, $g\coloneqq \sum_{l} g_l \var{x}^l$ are in $\cL^{\|}(\var{x})$, then $f \cdot g \in \cL^{\|}(\var{x})$. Let $S=\Supp_\fM(f)$ and $T=\Supp_\fM (g)$. These are both Noetherian, so for fixed $U$, by Lemma~\ref{lem:product_segmentation}, 
        there are $a \in \bN$, final segments $S_0, \ldots, S_{a-1}$ in $\fM$ and pairwise disjoint segments $T_0, \ldots,  T_{a-1}$ of $\fM$ such that for each $m$ and $l$
		\[(f_m \cdot g_l)|U = \sum_{j<a} (f_m|S_j) \cdot (g_l|T_j).\] 
		But then it suffices to observe that
		\[\begin{aligned}
			(f \cdot g) \| U =\sum_{p} \left(\left(\sum_{m+l=p} f_m g_l\right)\big\|U\right)\var{x}^p=\\ =\sum_{j<a} \sum_p \left(\sum_{m+l=p} (f_m|S_j) \cdot (g_l|T_j)\right)\var{x}^p=\sum_{j<a} (f\|S_j) \big( g\| T_j\big) 
		\end{aligned}\]
		which concludes the proof.
	\end{proof}
\end{lemma}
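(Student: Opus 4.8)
The plan is to reduce the whole statement to a single application of Lemma~\ref{lem:product_segmentation}, taking advantage of the fact that the product-segmentation it produces depends only on the $\fM$-supports of the two factors and on the truncation level, so that it can be used for all coefficients of the two series at once.

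First I would dispose of the additive part for free: the map $f \mapsto f\|\fm$ acts coefficient-wise through the $\bK$-linear operation $r \mapsto r|\fm$ on $\bK\Lhp\fM\Rhp$, hence is itself $\bK$-linear, so $(\lambda f + \mu g)\|\fm = \lambda(f\|\fm) + \mu(g\|\fm) \in \cL(\var{x})$ whenever $f, g \in \cL^{\|}(\var{x})$ and $\lambda, \mu \in \bK$. Thus $\cL^{\|}(\var{x})$ inherits closure under $\bK$-linear combinations --- in particular under subtraction --- from $\cL(\var{x})$. This settles the subgroup case and reduces the two ring cases to showing that $\cL^{\|}(\var{x})$ is closed under products, i.e.\ that $(f\cdot g)\|\fm \in \cL(\var{x})$ for all $f, g \in \cL^{\|}(\var{x})$ and all $\fm \in \fM$.

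For the product I would fix $f = \sum_m r_m\var{x}^m$, $g = \sum_l s_l\var{x}^l$ in $\cL^{\|}(\var{x})$ and $\fm \in \fM$, and set $R := \Supp_\fM(f) = \bigcup_m \Supp(r_m)$ and $S := \Supp_\fM(g)$. These are reverse well ordered subsets of $\fM$, being the images in the totally ordered $\fM$ of the well-partial-ordered supports of $f$ and $g$. Applying Lemma~\ref{lem:product_segmentation} to $R$, $S$ and the final segment $\{\fn \in \fM : \fn > \fm\}$, and rewriting each final-segment restriction it produces as a monomial truncation and each segment restriction as a difference of two monomial truncations (legitimate because $R$ and $S$ are reverse well ordered), I would obtain finitely many $\fn_0 < \cdots < \fn_{a-1}$ and $\fp_0 > \cdots > \fp_a$ in $\fM$ such that, for \emph{every} pair of Hahn series $u, v \in \bK\Lhp\fM\Rhp$ with $\Supp u \subseteq R$ and $\Supp v \subseteq S$,
\[(u\cdot v)|\fm = \sum_{j<a}(u|\fn_j)\cdot\big(v|\fp_{j+1} - v|\fp_j\big),\]
the crucial point being that the $\fn_j$ and $\fp_j$ depend only on $R$, $S$ and $\fm$, not on $u$ or $v$. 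Feeding $u = r_m$, $v = s_l$ into this identity for each pair $(m,l)$, summing over $m + l = p$ and then over $p$, and using that coefficient-wise truncation at a fixed monomial commutes with forming series in $\var{x}$, I would get
\[(f\cdot g)\|\fm = \sum_{j<a}(f\|\fn_j)\cdot\big((g\|\fp_{j+1}) - (g\|\fp_j)\big).\]
Since $f, g \in \cL^{\|}(\var{x})$, each of $f\|\fn_j$, $g\|\fp_{j+1}$, $g\|\fp_j$ lies in $\cL(\var{x})$, and as $\cL(\var{x})$ is a ring (resp.\ non-unital ring) the right-hand side lies in $\cL(\var{x})$; hence $(f\cdot g)\|\fm \in \cL(\var{x})$, and as $\fm$ was arbitrary $f\cdot g \in \cL^{\|}(\var{x})$.

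The only step with genuine content is the extraction of the product-segmentation that is uniform across all coefficients, which is precisely what Lemma~\ref{lem:product_segmentation} (and behind it the segmentation results of Subsection~\ref{ssec:wocombinatorics}) provides. Everything else --- the additive case, the commutation of $\|$ with forming power series in $\var{x}$, and the concluding appeal to the ring axioms of $\cL(\var{x})$ --- is routine bookkeeping, so I do not expect a real obstacle here.
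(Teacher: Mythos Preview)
Your proposal is correct and follows essentially the same argument as the paper's proof: both handle the additive part via $\bK$-linearity of $f\mapsto f\|\fm$, then for products apply Lemma~\ref{lem:product_segmentation} to $R=\Supp_\fM(f)$, $S=\Supp_\fM(g)$ and the final segment above $\fm$ to obtain a uniform decomposition $(r_m\cdot s_l)|\fm=\sum_{j<a}(r_m|\fn_j)(s_l|\fp_{j+1}-s_l|\fp_j)$, and then sum over all $m,l$ to conclude $(f\cdot g)\|\fm=\sum_{j<a}(f\|\fn_j)\big((g\|\fp_{j+1})-(g\|\fp_j)\big)\in\cL(\var{x})$. If anything, you make the key point---that the $\fn_j,\fp_j$ depend only on $R,S,\fm$ and hence work for all coefficient pairs simultaneously---more explicit than the paper does.
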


\begin{notation}
    Given $\cA(\var{x}) \subseteq \bK \Lhp \fM \times \var{x}^\bN\Rhp$, $\cB(\var{y})\subseteq \bK \Lhp \fM \times \var{y}^\bN\Rhp$, we will write $\cA(\cB(\var{x}))$ for the set of compositions of the restricted series in $\cA$ with the (assignments of) infinitesimal restricted series in $\cB(\var{y})$, i.e.
    \[\cA(\cB(\var{x}))\coloneqq \big\{f(g): f \in \cA(\var{x}),\; g \in  (\cB(\var{y})^{\prec 1})^{\var{x}}\big\}.\]
\end{notation}

\begin{lemma}\label{lem:tc_composition}
	Assume $\cA(\var{x})\subseteq \bK \Lhp \fM \times \var{x}^\bN\Rhp$ is a $\bZ$-subalgebra containing $\var{x}$ closed under truncations and $\HSder$-closed, and that $\cB(\var{y})\subseteq \bK \Lhp \fM \times \var{y}^\bN\Rhp$ is closed under $\|$-truncations. Then $\cA(\cB(\var{y}))$ is closed under truncations.
    \begin{proof}
        Let $\var{x}=\{\var{x}_i:i<n\}$ have $n$ distinct variables. Let $f:=\sum_{m\in \bN^\var{x}}f_m\var{x}^m \in \cA(\var{x})$, $g_i\in \cB(\var{y})^{\prec 1}$ for $i<n$, and $g \in (\cB(\var{y})^{\prec 1})^{\var{x}}$ be the assignment $g:=[\var{x}_i \mapsto g_i]_{i<n}$. We will use the notation $g\|R$ where $R$ is a segment of $\fM$ to denote the assignment given by $(g\|R) =[\var{x}_i \mapsto g_i\|R]$.

        Let $M \subseteq \fM$ be a final segment, $\bar{S}\coloneqq \Supp_\fM(f)$ and $\bar{T}\coloneqq \bigcup_{i<n}\Supp_{\fM}(g_i)$. Denote by $\pi: \bar{S} \times \bar{T}^* \to \fM$ the natural map given by the product in $\fM$. Let $\{L, U\}$ be the segmentation of $\bar{S} \times \bar{T}^*$ given by
        \[U\coloneqq \pi^{-1} (M)=\{(\mathfrak{s}, \mathfrak{t}_0, \ldots, \mathfrak{t}_{l-1}): l \in \bN,\, \mathfrak{s} \in S,\, (\mathfrak{t}_i)_{i<l} \in T^l,\, \mathfrak{s} \mathfrak{t}_0 \cdots \mathfrak{t}_{l-1}\in M\}.\]
        By Corollary~\ref{cor:word_segmentation}, we can find finite segmentations $\cS$ of $\bar{S}$ and $\cT$ of $\bar{T}$ such that $\cS \otimes \cT^*$ refines $\{L, U\}$, whence for all $l\in \bN$ and $S \in \cS$, $T_0, \ldots, T_{l-1} \in \cT$, $\big((c_l|S) \cdot \prod_{i<l} (g_i|T_i)\big)\|U$ is either $0$ or itself. Now consider for each $S\in \cS$, the sets $\cT_{0,S}\coloneqq\{T\in \cT: \forall l\in \bN, \pi(S \times T^l)\subseteq U\}$ and $\cT_{1,S}=\cT \setminus \cT_{0,S}$. Note that $T_{0,S}\coloneqq \bigcup \cT_{0,S}$ is a final segment and $T_{1,S}=T \setminus T_{0,S}$ is an initial segment. Now $g=g\|{T_{0,S}}+g\|{T_{1,S}}$ and 
        \[(f\|S)(g)=(\tilde{f}\|S)(g\|_{T_{0,S}}, g\|_{T_{1,S}})=\sum_{h \in \bN^{\var{x}}} (g\|{T_{1,S}})^h \cdot \big(\HSder^h (f\|S)\big)(g\|{T_{0,S}}).\] 
        Note that $\Supp_{\fM}((g\|T_{1,S})^h\big)\subseteq \pi(T_{1,S}^{|h|})$ where $|h|=\sum_{i<n} h(\var{x}_i)$, and that for large enough $|h|$, we must have $\pi(S\times T_{1,S}^{|h|})\subseteq L$, because, by construction, for all $T \in \cT_{1,S}$, there is some $m_T$ such that $\pi(S\times T^m)\not\subseteq U$ for all $m\ge m_T$ and hence $\pi(S\times T^m)\subseteq L$, by the property of the segmentations $\cS$ and $\cT$. Thus it suffices to have $|h|\ge m_{S}\coloneqq \max\{m_T: T \in \cT_{1,S}\}$.
        But then for all $h$ with $|h| \ge m_S$ we get $\Supp_{\fM} \big((g\|{T_{1,S}})^h (\partial_h f\|S)(g\|{T_{0,S}} \big)\subseteq L$ because it must be contained in the image by $\pi$ of a union of sets $X$ from the segmentation $\cT_{1,S}^{\otimes |h|} \otimes S \otimes \cT_{0,S}^*$, but each $X$ in such a family is either such that $X\subseteq L$ or $X\subseteq U$ and since by hypothesis $\pi (T_{1,S}^{|h|} \times S) \subseteq L$ and $T\le 1$, it must be $X\subseteq L$. It follows that
        \[f(g)\|M=\sum_{S \in \cS} (f\|S)(g)=\sum_{S \in \cS} \sum_{\substack{h\in \bN^{\var{x}}\\|h|<m_S}} (g\|{T_{1,S}})^h \cdot \big(\HSder^h (f\|S)\big)(g\|{T_{0,S}}),\]
        so $f(g)\|M \in \cA(\cB(\var{y}))$ as for all $i<n$ and $S \in \cS$, $g_i\|T_{0,S} \in \cB(\var{y})^{\prec 1}$ and $g_i\|T_{1,S} \in \cA(\cB(\var{x}))$, for all $h \in \bN^{\var{x}}$, $\HSder^h(f\|S) \in \cA(\var{x})$.
    \end{proof}
\end{lemma}

\begin{lemma}\label{lem:md_composition}
    Let $\var{y}$ be a finite set of variables and let $\cB(\var{y})\subseteq \bK \Lhp \fM \times \var{y}^\bN\Rhp^{\prec 1}$ contain $\var{y}$ and be closed under segments.
    If $\cL$ is a $\bZ$-algebra of weakly restricted power series containing $\fM$ and $\cB(\var{y})$, which is closed under composition on the right with series of the form $\var{x}+\var{y}$, and closed under truncations and monomial division, then
    \[\cL(\cB(\var{y})) = \{ f (g): \var{x} \subseteq \Var, f \in \cL(\var{x}), g \in (\cB(\var{y})^{\prec 1})^{\var{x}} \}\]
    is a subring of $\bK \Lhp \fM \times \var{y}^\bN\Rhp$ closed under truncations and monomial division.
    \begin{proof}
        That $\cL (\cB(\var{y}))$ is closed under under truncations follows from Lemma~\ref{lem:tc_composition}. Let us address monomial division. To avoid proliferation of letters, whenever $f$ is a series and $\Supp(f)\le \var{x}^m\fm$ we write $f/\var{x}^m\fm$ for the only non-singular $\tilde{f}$ such that $f=\var{x}^m\fm\tilde{f}$ (it exists because of the hypothesis that $\fM$ is cone ordered).
        
        Let $f \in \cL(\var{x})$ and $g \in \cB(\var{y})^{\var{x}}$ be such that $f(g) \in \fm\bK\Lhp \fM \times \var{y}^\bN\Rhp^{\preceq 1}$ for some $\fm \in \fM$.
        Now set $M=\fM^{>\fm}$ and argue as in Lemma~\ref{lem:tc_composition} and with the same notation get $\cS$ and $\cT$. Without loss of generality up to further refining $\cS$ and $\cT$ we can assume that each $S \in \cS$ and $T \in \cT$ has a maximum (which we denote by $\fp_S$ and $\fn_T$ respectively).

        Let $\var{z}\coloneqq\{\var{z}_{i, T}: i<n, T \in \cT\}\subseteq \Var$ be a set of $|\cT|n$ variables, and write \[h\coloneqq f[\var{x}_i \mapsto \sum_{T \in \cT}\var{z}_{i,T}: i<n]=\sum_{m \in\bN^{\var{z}}}h_m \var{z}^m \quad \text{with}\quad h_m \in \bK \Lhp \fM \Rhp.\]
        Let also $G\in \cB(\var{y})^{\var{z}}$, $\tilde{G} \in \cL(\var{y})^{\var{z}}$ and $\fn \in \fM^{\var{z}}$ be the assignments
        \[\begin{aligned}
            G\coloneqq[\var{z}_{i,T}\mapsto (g_i\|T): i<n, T \in \cT],\\
            \tilde{G}\coloneqq [\var{z}_{i,T}\mapsto (g_i\|T)/\fm_T: i<n, T \in \cT],\\
            \fn \coloneqq [\var{z}_{i,T} \mapsto \fn_T: i < n, T \in \cT]
        \end{aligned}\]
        For each $S \in \cS$, let $N_S\subseteq \bN^{\var{z}}=\{m \in \bN^{\var{z}}: \Supp_{\fM}(h_mG^m)\subseteq \fM^{\le \fm}\}$ and note that $N_S$ is a final segment of $\bN^{\var{z}}$.
        We can partition each $N_S$ into final finitely many segments $\{N_{S,j}: j<l_S\}$ each with a minimum $m_{S,j}$.
        Finally set $h_{S,j}\coloneqq h |(S \times \var{z}^{N_{S,j}})$ and note that $\tilde{h}_{S,j}\coloneqq h_{S,j}/(\fp_S\var{z}^{m_{S,j}}) \in \cL$ for all $S\in \cS$ and $j<l_S$, and that
        \[f(g)=f(g)\|\fM^{\le \fm}=\sum_{\substack{S \in \cS\\j<l_S}} h_{S,j}(G)
        =\sum_{\substack{S \in \cS\\j<l_S}}\fp_S\fn^{m_{S,j}} \cdot \tilde{G}^{m_{S,j}}\cdot \tilde{h}_{S,j}(G).\]
        By construction each $\Supp_{\fM}(\tilde{G}^{m_{S,j}}\cdot \tilde{h}_{S,j}(G)) \le 1$ and each $\fp_S\fn^{m_{S,j}}\le \fm$, thus we can conclude that $f(g)/\fm \in \cL (\cB(\var{y}))$. 

        As for showing that if $f \in \cL(\var{x})$ and $g \in \cB(\var{y})^{\var{x}}$ are such that $f(g) \in \var{y}^m\bK\Lhp \fM \times \var{y}^\bN\Rhp^{\preceq 1}$ then $f(g)/\fm \in \cL(\cB(\var{y}))$, we can restrict to the case in which $\var{y}^m=\var{y}_0$ for some single variable $\var{y}_0 \in \var{y}$.
        Also, up to replacing $f$ with a composition of $f$ with some sum of variables, can assume that all $g_i$s are either such that $g_i=\var{y}_0\tilde{g}_i$ for some non-singular $\tilde{h}_i$ or that $g_i \in \bK \Lhp \fM \times (\var{y} \setminus \{\var{y}_0\})^\bN\Rhp$.
        Thus we only have to deal with the situation where $\var{x}=\var{z} \cup \var{t}$ for some disjoint sets of variable $\var{z}$, $\var{t}$, and $f \in \cL(\var{z},\var{t})$ is evaluated at an assignment $g=(c, h)$ where $c \in \bK \Lhp \fM \times (\var{y} \setminus \{\var{y}_0\})^\bN\Rhp^{\var{z}}$ and $h\in \big(\cB(\var{y}) \cap (\var{y}_0\bK \Lhp \fM \times \var{y}^\bN\Rhp^{\preceq 1})\big)^{\var{t}}$. It follows from the hypothesis that then $f=\sum_{j \in \bN^{\var{t}}} F_j\var{t}^{j}$ with $F_j \in \bK \Lhp \fM \times \var{z}^{\bN}\Rhp$ and $F_0(c)=0$, so $f(c,h)=\bar{f}(c,h)$ where $\bar{f}=f-F_0 \in \cL$ can be written as sum $\bar{f}=\sum_{i<l} \var{t}^{m_i}f_i$ for some $l\in \bN$, $m_i\in\bN^{\var{t}}\setminus \{0\}$ and some non-singular $f_i \in \cL$. Finally note 
        \[\bar{f}(c,h)/\var{y}_0=\sum_{i<l} \var{y}_0^{|m_i|-1} \tilde{h}_i^{m_i}\cdot f_i(c,h) \in \cL(\cB(\var{y})),\]
        where $\tilde{h}_i=h_i/\var{y}_0 \in \cL(\var{y})^{\var{t}}$ and $|m_i|$ is the sum of the components of $m_i \in \bN^{\var{t}}$.
    \end{proof}
\end{lemma}

\begin{definition}\label{def:inverses}
    Let $\var{x}$ be a single variable. Given $g\in\bK \Lhp \fM \times \var{x}^\bN\Rhp$ and $f \in \bK \Lhp \fM \times \var{x}^\bN\Rhp^{\prec 1}$, we will write $g \circ f$ for the series $g(f)=g[\var{x}\mapsto f]$. We write $\bK \Lhp \fM \times \var{x}^\bN\Rhp^\circ$ for the set
    \[\bK \Lhp \fM \times \var{x}^\bN\Rhp^\circ=\bigg\{f \in \bK\Lhp \fM \times \var{x}^\bN\Rhp^{\prec 1}: (\partial_{\var{x}}f)[\var{x}\mapsto 0]\in \big(\bK\Lhp \fM\Rhp^{\preceq 1}\big)^{*}\bigg\},\]
    and set for $\cF(\var{x})\subseteq \bK\Lhp \fM \times \var{x}^\bN\Rhp$, $\cF(\var{x})^\circ \coloneqq \cF(\var{x})\cap \bK \Lhp \fM \times \var{x}^\bN\Rhp^\circ$.
    For $f \in \bK\Lhp \fM \times \var{x}^\bN\Rhp^\circ$ we write its \emph{compositional inverse} as $f^{\circ(-1)}$, this is the unique series $f \in \bK \Lhp \fM \times \var{x}^\bN\Rhp$ such that $f\circ f^{\circ(-1)}=f^{\circ(-1)} \circ f=\var{x}$.
\end{definition}

\begin{notation}
    We use the classical notation $[\var{x}^n]f$ for the coefficient of $\var{x}^n$ of a power series $R[\![\var{x}]\!]$ with coefficients in a ring $R$.
\end{notation}

\begin{remark}\label{rmk:lagrange}
    When $\fM=1$, the set $\bK\Lhp \var{x}^\bN\Rhp^{\circ}$ is the usual set of power series with coefficients from $\bK$ that have a formal inverse, i.e.\ those $f=\sum_{n\in \bN}f_n \var{x}^n \in \bK\Lhp \var{x}^\bN\Rhp =\bK [\![\var{x}]\!]$ where $f_0=0$ and $f_1 \in \bK^*$, whence $f$ can be written as $\var{x}/g$ where $g \in \bK [\![\var{x}]\!]^*$. Recall the following forms of Lagrange inversion formula relating the coefficients of $f^{\circ(-1)}$ and the coefficients of $g$ (see \ \cite[Thm.~2.1.1(1\-2)]{gessel2016lagrange})
    \begin{equation}\label{eq:lagrange_inversion}
    \begin{aligned}
    n[\var{x}^n](f^{\circ(-1)})^m = m[\var{x}^{n-m}]g^n \quad \text{for all}\quad n,m\in \bN;\\
    [\var{x}^n](f^{\circ(-1)})^m=m[\var{x}^{n-m}](g^{n+1}\partial f) \quad \text{for all}\quad n,m\in \bN.\end{aligned}
    \end{equation}
\end{remark}

\begin{remark}\label{rmk:inverses1}
    The set $\bK\Lhp \fM \times \var{x}^\bN\Rhp^\circ$ consists exactly of the restricted power series that admit a compositional inverse in $\bK \Lhp \fM \times \var{x}^\bN\Rhp$. 
    In fact, if $f=\sum_{n\in \bN^{>0}} f_n \var{x}^n \in \var{x}\bK \Lhp \fM \times \var{x}\Rhp^{\preceq 1}$, then by Remark~\ref{rmk:lagrange} it has a compositional inverse if and only if $f_1 \in \bK \Lhp \fM \Rhp^*$, in which case its coefficients are given by the Lagrange inversion formula which produce a weakly restricted power series because the hypothesis on $f$ entails that $g=\var{x}/f\in \var{x}\bK \Lhp \fM \times \var{x}^\bN \Rhp^{\preceq 1}\subseteq \bK \Lhp \fM \times \var{x}^\bN \Rhp^{\prec 1}$.
    
    On the other hand any $f \in \bK \Lhp \fM \times \var{x}^\bN\Rhp^{\prec 1}$ can be written as $f=c+ \tilde{f}$ where $\tilde{f} \in \var{x}\bK \Lhp \fM \times \var{x}\Rhp^{\preceq 1}$ and $c \in \bK \Lhp \fM \Rhp^{\prec 1}$ and we see that $f$ has a compositional inverse in $\bK\Lhp \fM \times \var{x}^\bN \Rhp$ if and only if $\tilde{f}$ does and the inverses are related by the relation
    \[f^{\circ(-1)}=\tilde{f}^{\circ(-1)} \circ (\var{x}-c) \qquad \tilde{f}^{\circ(-1)}=f^{\circ(-1)} \circ (\var{x}+c).\]
    Thus $f\in \bK \Lhp \fM \times \var{x}^\bN\Rhp$ has a compositional inverse in $\bK\Lhp \fM \times \var{x}^\bN \Rhp$ if and only if $[\var{x}^1]\tilde{f}=[\var{x}^1]f=(\partial_{\var{x}}f)[\var{x}\mapsto 0]$ is a unit of $\bK \Lhp \fM \Rhp$.
\end{remark}

In Lemma~\ref{lem:inversion_tc} below, we will need the following classic fact about how to express the $n$-th coefficient and the $n$-th derivative of the inverse of a series in terms of the coefficients and derivatives of the series itself. We refer to \cite[Sec.~3.3]{comtet1974advanced} for generalities on Bell polynomials.

\begin{fact}\label{fact:inverse_der}
    For $n \in \bN^{>0}$ we have and $f \in \var{x}\bZ^{\neq 0} + \var{x}^2\bZ[\![\var{x}]\!]$ we have
    \[\begin{aligned}
        \HSder^{n}(f^{\circ(-1)})\circ f = \sum_{j <n} \frac{(-1)^j}{n \HSder(f)^{n+j}} \binom{n+j-1}{j} \hat{B}_{n-1,j}(\HSder^2(f), \ldots, \HSder^{n-j+1}(f))=\\
        =\sum_{j \le i < n} \frac{(-1)^j}{\HSder(f)^{n+j+1}} \binom{n+j}{j} (n-i) \hat{B}_{i,j}(\HSder^2(f), \ldots, \HSder^{i-j+2}(f)) \HSder^{n-i}(f).
    \end{aligned}\]
    where $\hat{B}_{i,j}$ denotes the ordinary Bell polynomial of degree $j$ and weight $i$. In particular for any ring $R$, and $f \in \var{x}R^* + \var{x}^2R[\![\var{x}]\!]$, $\HSder^n(f^{\circ(-1)}) \circ f$ is a polynomial with integer coefficients of $\HSder(f)^{-1}$ and $\HSder^{k+2}(f)$ for $k<n-1$.
    \begin{proof}
        Up to considering $f[\var{x} \mapsto \var{x}+\var{t}] = \sum_{n \in \bN}\HSder^n(f) \var{t}^n \in (\bZ[\![\var{x}]\!])[\![\var{t}]\!]$,
        the statement amounts to prove that given any $f= \sum_{n\ge 1} f_n \var{x}^n$ one has 
        \[\begin{aligned}
            [\var{x}^{n}](f^{\circ(-1)})=\sum_{j < n}\frac{(-1)^{j}}{nf_1^{n+1+j}} \binom{n+j-1}{j} \hat{B}_{n-1,j}(f_2, \ldots f_{n-j})=\\
            =\sum_{j \le i < n} \frac{(-1)^j}{f_1^{n+j+1}} \binom{n+j}{j} (n-i) \hat{B}_{i,j}(f_2, \ldots f_{i-j+1}) f_{n-i}.
        \end{aligned}\]
        This in turn follows from the two forms of Lagrange inversion (\ref{eq:lagrange_inversion})
        \[\begin{aligned}
            [\var{x}^{n}](f^{\circ(-1)}) = \frac{1}{n}[\var{x}^{n-1}]\left(\frac{f}{\var{x}}\right)^{-n}=[\var{x}^{n-1}]\left( \left(\frac{f}{\var{x}}\right)^{-(n+1)}\HSder(f)\right)=\\
            =\sum_{i<n}\;[\var{x}^{i}]\!\!\left(\left( \frac{f}{\var{x}}\right)^{-(n+1)}\right) \cdot [\var{x}^{n-1-i}]\big(\HSder(f)\big).
        \end{aligned}\]
        after using $[\var{x}^{n-1-i}]\HSder(f)=(n-i)f_{n-i}$ and
        \[\begin{aligned}
            [\var{x}^k]\left(\frac{f}{\var{x}}\right)^{-m}=f_1^{-m}[\var{x}^k]\left(\frac{1}{1-(f-\var{x}f_1)/(\var{x}f_1)}\right)^{m}=\\
            =f_1^{-m}[\var{x}^k] \sum_{j} \binom{m+j-1}{j}\left(\frac{f-\var{x}f_1}{f_1\var{x}}\right)^j=\\
            =f_1^{-m} \sum_{j} \binom{m+j-1}{j} \hat{B}_{k,j}\left(\frac{f_2}{-f_1}, \ldots, \frac{f_{k-j+2}}{-f_1}\right)=\\
            =\sum_{j} f_1^{-m-j} (-1)^j \binom{m+j-1}{j} \hat{B}_{k,j}(f_2, \ldots, f_{k-j+2})
        \end{aligned}\]
        respectively with $(k,m)=(n-1,n)$ and $(k,m)=(i, n+1)$.
    \end{proof}
\end{fact}

\begin{lemma}\label{lem:inversion_tc}
    Let $\var{x}$ be a single variable and $\cA(\var{x})\subseteq \bK\Lhp \fM \times \var{x}^\bN\Rhp$ be a subring of $\bK \Lhp \fM \times \var{x}^\bN\Rhp$ closed under Hasse-Schmidt derivatives, truncations and such that $\cA(\cA(\var{x})^{\prec1})\subseteq \cA(\var{x})$.
    Let 
    \[\tilde{\cB}(\var{x})=\{f \in \cA(\var{x}) \cap \var{x}\bK \Lhp \fM \times \var{x}^\bN\Rhp^{\preceq 1}: \HSder(f) \in \big(\bK \Lhp \fM \times \var{x}^\bN\Rhp^{\preceq 1}\big)^*, 1/\HSder(f) \in \cA(\var{x})\},\]
    and suppose that $\cB(\var{x})\subseteq \tilde{\cB}(\var{x})$ is s.t.\ $\cB(\var{x})\cup \{0\}$ is closed for $\|$-truncations, then
    \[\cA\big(\cB(\var{x})^{\circ(-1)}\big)\coloneqq\big\{g \circ f^{\circ(-1)}: g \in \cA(\var{x}), f \in \cB(\var{x})\big\}\]
    is closed under $\|$-truncations.
    \begin{proof}
        Let $f \in \cB(\var{x})$. We need to show that $(a\circ f^{\circ(-1)})\|U \in \cA \big(\cB(\var{x})^{\circ(-1)}\big)$ for all $f \in \cB(\var{x})$, all final segments $U$ of $\fM$, and all $a \in \cA(\var{x})$.

        We argue by induction (cf Remark~\ref{rmk:initial_segments_of_wpo}) on the type of order of $\Supp_{\fM}(f)$, so we can assume that $(a \circ h^{\circ(-1)})\|U\in \cA(\cB(\var{x}))$ for all final segments $U$ of $\fM$, all $a \in \cA(\var{x})$ and all $h\in \cA(\var{x})^{\circ}$ with $\Supp_{\fM}(h)$ order-isomorphic to a proper final segment of $\Supp_{\fM}(f)$ (so a proper initial segment when considering $\le_\op$ for which these are well partial orders).

        By Lemma~\ref{lem:tc_composition}, it suffices to show that $f^{\circ(-1)}\|U\in \cA(\cB(\var{x})^{\circ(-1)})$ for all final segments $U$ of $\fM$: in fact, by said lemma, $(a\circ f^{\circ(-1)})\|U\in \cA (\{f^{\circ(-1)}\|U' : U' \text{ final segment of }\fM\})$, so if each $f^{\circ(-1)}\|U' \in \cA(\cB(\var{x}))$, we have 
        $(a\circ f^{\circ(-1)})\|U \in \cA(\cA(\cB(\var{x})^{\circ(-1)})^{\prec 1})=\cA(\cB(\var{x})^{\circ(-1)})$ becasue $\cA(\cA(\var{x})^{\prec 1})=\cA(\var{x})$.

        When $\Supp_\fM(f)$ is a singleton, then necessarily $f\in \var{x}\bK^*+\var{x}^2\bK[\![\var{x}]\!]$ and thus any proper $\|$-truncation of $f^{\circ(-1)}$ is $0$. So we can also assume that $\Supp_\fM(f)$ is not a singleton. 
        Note that by Taylor and Fact~\ref{fact:inverse_der}, said $d=f-h$ we have
        \[\begin{aligned}
            f^{\circ(-1)}\circ h=f^{\circ(-1)}\circ (f-d)=\sum_{n \ge 0} \big(\HSder^n(f^{\circ(-1)})\circ f\big) \cdot (-d)^n=\\
            =\var{x} + \sum_{n \ge 0} \big(\HSder^{n+1}(f^{\circ(-1)})\circ f\big) \cdot (-1)^{n+1} \cdot d^{n+1}=\\
            =\var{x} + \sum_{n \ge 0} d^{n+1} (-1)^{n+1} p_{n+1}\big((\HSder(f)^{-1}, \HSder^2(f), \ldots, \HSder^{n+2}(f)\big)\\
        \end{aligned}\]
        where $p_{n+1}$ is the polynomial with integer coefficients given in Fact~\ref{fact:inverse_der}.
        Thus setting $q_n\coloneqq (-1)^{n+1} p_{n+1}\big(\HSder(f)^{-1}, \HSder^2(f), \ldots, \HSder^{n+2}(f)\big)$ we get
        \[f^{\circ(-1)} = h^{\circ(-1)}+\sum_{n \ge 0} (q_n \circ h^{\circ(-1)}) \cdot (d^{n+1}\circ h^{\circ(-1)}).\]
        Note that $q_n \in \cA(\var{x})$ for all $n$ because $\cA$ is $\HSder$-closed and $\HSder(f)^{-1} \in \cA(\var{x})$. Also note that for all $n \in \bN$, $\Supp(\HSder^n(f))\subseteq \Supp(f)$ and $\Supp(\HSder(f)^{-n})$ is contained in the submonoid of $\fM$ generated by $\Supp_\fM(f)$.
        
        Now let $U$ be an upper segment of $\fM$ and $L=\fM \setminus U$. By Proposition~\ref{prop:word_segmentation}, we can find a finite segmentation $\cS$ of $\Supp_\fM(f)\le 1$, such that $\cS^*$ refines the segmentation $\{\pi^{-1}(U), \pi^{-1}(L)\}$ where $\pi: \Supp_\fM(f)^* \to \fM$ is the natural product map. Let $\cS_U\coloneqq\{S \in \cS: S\cdot  \pi(\Supp_\fM(f)^*)\subseteq U\}$, $\cS_L=\cS \setminus \cS_0$, $S_U=\bigcup \cS_U$ and $S_L=\bigcup\cS_L$. Note that $S_U$ is a final segment, $S_L$ is an initial segment and that $1 \in S_U$, so $1 \notin S_L$. Also note that if $S_U\supseteq \Supp_\fM(f)$, then $f^{\circ(-1)}\|U=f^{\circ(-1)}$, so we can assume without loss of generality that $S_U \cap \Supp_{\fM}(f)$ is a proper final segment of $\Supp_{\fM}(f)$.
        Set $h=f\|S_U$, $d=g\|S_L$, and observe that $\Supp_\fM(d^{n+1})\subseteq \pi(S_L^{\times(n+1)})$ and $\Supp_{\fM}(q_n)\subseteq \pi(\Supp_{\fM}(f)^*)\le 1$.
        Since for $N\ge 1$ large enough $\pi(S_L^{\times N}) \subseteq L$, we thus have 
        \[f^{\circ(-1)}\|U = h^{\circ(-1)}\|U + \sum_{n=0}^N \big((q_n \circ h^{\circ(-1)}) \cdot (d^{n+1}\circ h^{\circ(-1)})\big)\|U.\]
        By Lemma~\ref{lem:product_segmentation} each $\big((q_n \circ h^{\circ(-1)}) \cdot (d^{n+1} \circ h^{\circ(-1)})\big)\|U$ is in the algebra generated by the $\|$-truncations of the $\HSder^{i+2}(f)\circ h^{\circ(-1)}$ for $i \ge 0$ and of $\HSder(f)^{-1} \circ h^{\circ(-1)}$.
        But using Lemma~\ref{lem:tc_composition}, from the inductive hypothesis, the fact that $\cA$ contains $\HSder^i(f)$ for all $f$ as well as $1/\HSder(f)$ and that $\cA$ is truncation closed, we deduce that $\cA(\cB(\var{x})^{\circ(-1)})$ contains all such $\|$-truncations and thus finally $f^{\circ(-1)}\|U \in \cA(\cB(\var{x})^{\circ(-1)})$.
    \end{proof}
\end{lemma}

\begin{corollary}\label{cor:inverses_tc}
    Let $\var{x}$ be a single variable and $\cA(\var{x})\subseteq \bK\Lhp \fM \times \var{x}^\bN\Rhp$ be a subring of $\bK \Lhp \fM \times \var{x}^\bN\Rhp$ closed under Hasse-Schmidt derivatives and truncations, and such that $\cA(\cA(\var{x})^{\prec1})\subseteq \cA(\var{x})$, $\var{x}, 1/(1-\var{x}) \in \cA(\var{x})$, and $\big(\cA \cap (\var{x}\bK^*)\big)/\var{x}$ is a subgroup of $(\bK^*, \cdot, 1)$. Then $\cA\big((\cA(\var{x})^\circ)^{\circ(-1)}\big)$ is closed under truncations.
    \begin{proof}
        Note that the hypothesis on $\cA(\var{x})$ entails that for all $f \in \cA(\var{x})^\circ \cap \var{x}\bK \Lhp \fM \times \var{x}^\bN\Rhp$, $1/\HSder(f) \in \cA(\var{x})$: in fact letting $f_1=\HSder(f)[\var{x} \mapsto 0]$ we get $f_1=k+ \varepsilon$ where $k \in \big(\var{x}\bK^* \cap \cA(\var{x})\big)/\var{x}$, but then also $k^{-1}\var{x} \in \cA(\var{x})$ and thus $h=1/(k-\var{x})=k^{-1}/(1-k^{-1}\var{x}) \in \cA(\var{x})$ and $1/\HSder(f)=h(\HSder(f)-k)\in \cA(\var{x})$.
        It follows that setting $\cB(\var{x})=\cA(\var{x})^\circ \cap \var{x}\bK \Lhp \fM \times \var{x}^\bN\Rhp$, we have that $\cA(\cB(\var{x}))$ is truncation closed. But 
        \[\cA((\cA(\var{x})^\circ)^{\circ(-1)})=\cA(\cB(\cT(\var{x}))\]
        where $\cT(\var{x})=\{\var{x}-a: a \in \cA(\var{x}) \cap \bK \Lhp \fM \Rhp^{\prec 1}\}$ which is also truncation closed, so we can conclude by Lemma~\ref{lem:tc_composition}.
    \end{proof}
\end{corollary}

\begin{notation}
    Let $\var{x} \in \Var$, $\var{y}\subseteq \Var\setminus \{\var{x}\}$, and $f \in \bK \Lhp \fM\times \var{x}^\bN\var{y}^\bN\Rhp$, we can regard $f$ as an element of $\bK \Lhp \fN\times \var{x}^\bN\Rhp$ with $\fN=\fM \times \var{y}^\bN$ via the ``syntactical'' isomorphism $\tau:\fM\times \var{x}^\bN\var{y}^\bN \to \fN$, $\tau(\fm, \var{x}^n\var{y}^m)=\big((\fm, \var{y}^m), \var{x}^n)$. We will say that $f$ has a (restricted) compositional inverse in $\var{x}$ if $\tau f \in \bK \Lhp \fM \times \var{y}^\bN \times \var{x}^\bN  \Rhp$ has a restricted compositional inverse and write $f^{\circ\var{x}(-1)}$ for $\tau^{-1} \big((\tau f)^{\circ(-1)}\big)$.
    We will denote the set of restricted series with a restricted compositional inverse in $\var{x}$, by
    \[\bK\Lhp \fM \times \var{x}^\bN \var{y}^\bN\Rhp^{\circ \var{x}}
    \coloneqq \big\{f \in \bK\Lhp \fM \times \var{x}^\bN \var{y}^\bN\Rhp^{\prec 1}: (\partial_{\var{x}}f)[\var{x}\mapsto 0] \in \big(\bK \Lhp \fM \times \var{y}^\bN\Rhp^{\preceq 1}\big)^*\big\},\]
    and write $\cF(\var{x}, \var{y})^{\circ\var{x}}$ for $\cF(\var{x}, \var{y}) \cap \bK\Lhp \fM \times \var{x}^\bN \var{y}^\bN\Rhp^{\circ \var{x}}$. 
\end{notation}

\begin{remark}\label{rmk:inverses_vs_implicit}
    Let $\var{x} \in \Var$, $\var{y}\subseteq \Var\setminus \{\var{x}\}$, and $f \in \bK \Lhp \fM\times \var{x}^\bN\var{y}^\bN\Rhp^{\prec 1}$. 
    Note that if $f\in \bK\Lhp \fM \times \var{x}^\bN\var{y}^\bN\Rhp^{\circ\var{x}}$, then $g\coloneqq f^{\circ\var{x}(-1)}[\var{x}\mapsto 0] \in \bK \Lhp \fM \times \var{y}^\bN\Rhp$ is such that $f[\var{x}\mapsto g]=0$.

    Conversely for $\var{z} \in \Var \setminus (\{\var{x}\}\cup \var{y})$, $h=f+\var{z}$, satisfies $(\partial_{\var x}h)[\var{x}\mapsto 0]\in \big(\bK \Lhp \fM \times \var{y}^\bN \Rhp^{\preceq 1}\big)^*\subseteq \big(\bK \Lhp \fM \times \var{y}^\bN\var{z}^\bN \Rhp^{\preceq 1}\big)^*$ if and only if $f\in \bK \Lhp \fM \times \var{x}^\bN \var{y}^\bN\Rhp^{\circ\var{x}}$, in which case $g=f^{\circ \var{x}(-1)}[\var{x} \mapsto \var{z}]$ satisfies $h[\var{x} \mapsto g]=0$. Thus if $\cF$ is a $\bZ$-algebra of weakly restricted power series with $\var{z}\subseteq \cF(\var{z})$ for all $\var{z}\subseteq \Var$, then it has implicit functions (Definition~\ref{defn:closure_ppties}\ref{psFppties:implicit}) if and only if for every $f \in \cF(\var{x}, \var{y})^{\circ\var{x}}$, $f^{\circ\var{x}(-1)}\in \cF(\var{x}, \var{y})$. 
\end{remark}

We are ready to prove the specialization of Theorem~\ref{thm:mainT0} to the case $\Lambda=\bN$.

\begin{proposition}\label{prop:main_for_bN}
    Suppose that $\cL$ is a $\bZ$-algebra of weakly restricted power series, which is $\HSder$-closed and truncation-closed.  
    \begin{enumerate}
        \item The closure $\cL^*$ of $\cL$ under compositions is closed under truncations;
        \item The closure $\cL^{**}$ of $\cL$ under compositions and implicit functions is closed under truncations.
    \end{enumerate}
    Furthermore if $\cL$ contains $\fM$ and is closed under monomial division, and $\fM$ is cone ordered, then $\cL^*$ and $\cL^{**}$ are both closed under monomial division.
    \begin{proof}
        (1) Let $\cL^*$ denote the closure of $\cL$ under compositions. Note that since $\cL$ is a $\HSder$-closed $\bZ$-algebra, so is $\cL^*$ (by Faà di Bruno's formula).
        
        Note that by Remark~\ref{rmk:coefficient-wise_tc_is_tc}, to prove (1) it suffices to show that $(\cL^*)^{\|}=\cL^*$.
        By Lemma~\ref{lem:tc_product}, $(\cL^*)^{\|}$ is a $\HSder$-closed $\bZ$-algebra. On the other hand, by Lemma~\ref{lem:tc_composition}, $(\cL^*)^{\|}$ is closed under compositions. Now since $\cL$ is closed under truncations, $(\cL^*)^{\|}\subseteq \cL^*$ contains $\cL$ and thus it must be $(\cL^*)^{\|}=\cL^*$.

        (2) Let $\cL^{**}$ denote the closure of $\cL^*$ under implicit functions, which by Remark~\ref{rmk:inverses_vs_implicit} is the same as the closure under taking compositional inverses in any given variable. Again by Remark~\ref{rmk:coefficient-wise_tc_is_tc}, to prove (2) it suffices to show that $(\cL^{**})^{\|}=\cL^{**}$ and again $(\cL^{**})^{\|}$ is $\HSder$-closed, closed under compositions, and contains $\cL^*$. Also observe that given any single variable $\var{x}$:
        \begin{enumerate}
            \item $\cL^{**}$ contains $\var{x}/(1-\var{x})$ as solution in $\var{y}$ to $\var{y}-\var{y} \var{x}-\var{x}=0$, so $\cL^{**}$ also contains $1/(1-\var{x})=1+\var{x}/(1+\var{x})=\sum_{n\ge0} \var{x}^n$; thus we have $1/(1-\var{x}) \in (\cL^{**})^{\|}$.
            \item $\big((\cL^{**})^{\|} \cap \var{x}\bK^*\big)/\var{x}$ is a subgroup of $\bK^*$ because if $k\var{x} \in \cL^{**}$ with $k \in \bK^*$, then its compositional inverse $k^{-1}\var{x}$ is must be in $\cL^{**}$.
        \end{enumerate}
        It then follows from Corollary~\ref{cor:inverses_tc}, that $(\cL^{**})^{\|}$ is also closed under taking compositional inverses in any given variable, thus $(\cL^{**})^{\|}=\cL^{**}$ and the proof of (2) is complete.

        Finally in the case $\fM$ is cone ordered and $\cL$ is closed under monomial division and contains $\fM$, the ``furthermore'' can be proven with an analogous argument letting $(\cL^*)^{/\!/}$ be largest family closed under monomial divisions and truncations contained in $\cL^*$ and observing it must be an algebra closed under compositions by Lemmas~\ref{lem:md-closed_gen_subring} and \ref{lem:md_composition}. As for $\cL^{**}$, it suffices to show that if $f\in \cL^*(\var{x},\var{y})$ has a compositional inverse $f^{\circ\var{x}(-1)}$ in $\cL^*(\var{x},\var{y})$ for the variable $\var{x}$, then $1=\max\Supp_{\fM}(f)=\max\Supp_{\fM}(f^{\circ\var{x}(-1)})$, so adjoining compositional inverses does not affect closure under monomial division.
    \end{proof}
\end{proposition}

Deducing Theorem~\ref{thm:mainT0} from Proposition~\ref{prop:main_for_bN} above is now a essentially a matter of syntactical manipualtions.

\begin{definition}\label{def:syntactical_equiv}
    Let $\Var^{(\Lambda)}$ denote the free $\Lambda$-module on $\Lambda$ ordered by $\var{x}^\lambda \le 1$ if and only if $\lambda \ge 0$ component wise.
    We say that a language $\cL$ of weakly restricted $\bN$-power series with coefficients from $\bK \Lhp \fM \times \Var^{(\Lambda)}\Rhp$ is
    \begin{enumerate}
        \item \emph{closed under shifting classical variables} if for all $\var{x},\var{y}, \var{z}$ finite pairwise disjoint sets of variables, whenever $f\in \cL(\var{x})$ is such that $f \in \bK \Lhp \fM \times \var{y}^{\Lambda}\var{z}^\bN \times \var{x}^\bN\Rhp$, we have that $\rho f \in \cL(\var{x}, \var{z})$ where $\rho(\fm,\var{y}^{\gamma}\var{z}^n, \var{x}^m)=(\fm, \var{y}^\gamma, \var{z}^n\var{x}^m)$ and whenever $g \in \cL(\var{x}, \var{y})$, $\rho^{-1}g\in \cL(\var{x})$;
        \item a \emph{$\Lambda$-language} if it is closed under shifting classical variables and each $\cL(\var{x})$ is closed under endomorphisms induced by the monomial transformations of the form
        \[\id_{\fM} \times \sigma \times \id_{\var{x}^\bN}: \fM \times \Var^{(\Lambda)}\times \var{x}^\bN \to \fM \times \Var^{(\Lambda)}\times \var{x}^\bN,\]
        where $\sigma: \Var^{(\Lambda)}\to \Var^{(\Lambda)}$ is induced by a finite-to-one map on $\Var$.
    \end{enumerate}
    Given a language $\cF$ of weakly restricted $\Lambda$-power series with coefficients from $\bK \Lhp \fM \Rhp$, we can form the language $\cL=\cF^{|\bN}$ of weakly restricted $\bN$-power series with coefficients from $\bK \Lhp \fM \times \Var^{(\Lambda)}\Rhp$, by setting
    \[\cL(\var{x})=\cF^{|\bN}(\var{x})=\big\{\sigma_{\var{x}, \var{y}}^{\delta}f: \var{y} \subseteq \Var\setminus \var{x},\; \delta\in \Var^{\var{y}}, f \in \cF(\var{x}, \var{y}) \cap \bK \Lhp \fM \times \var{x}^\bN\var{y}^\Lambda\Rhp\big\},\]
    where $\sigma_{\var{x}, \var{y}}:\fM \times \var{x}^\bN \var{y}^\Lambda \to (\fM \times \var{z}^\bN)\times \var{y}^\Lambda$ is given by $\sigma_{\var{x}, \var{y}}(\fm, \var{x}^n\var{y}^\gamma)=(\fm, \delta(\var{y}^\gamma), \var{x}^n)$.
    Conversely, given a language $\cL$ of weakly restricted $\bN$-power series with coefficients from $\bK \Lhp \fM \times \Var^{(\Lambda)}\Rhp$, we can form a family $\cF=\cL^{\Lambda}$ of weakly restricted $\Lambda$-power series by setting
    \[\cF(\var{x})=\{\tau_{\var{y}, \var{x}} f \in \cL(\var{y}): \var{y}\subseteq \var{x},\, \Supp(f)\subseteq \fM \times (\var{x}\setminus \var{y})^{\Lambda}\times \var{y}^\bN\},\]
    where $\tau_{\var{x},\var{y}}: \fM \times (\var{x}\setminus \var{y})^{\Lambda} \times \var{y}^\bN \to \fM \times \var{x}^\Lambda$ is given by $\tau_{\var{x}, \var{y}}(\fm, (\var{x}\setminus \var{y})^\gamma, \var{y}^n)=(\fm, (\var{x}\setminus \var{y})^\gamma \var{y}^n)$.
\end{definition}

\begin{remark}
    Given a language $\cF$ of weakly restricted $\Lambda$-power series with coefficients from $\bK \Lhp \fM \Rhp$, $\cF^{|\bN}$ is always a $\Lambda$-language and $(\cF^{|\bN})^\Lambda=\cF$. Furthermore if a language of weakly restricted $\bN$-power series with coefficients from $\bK \Lhp \fM \times \Var^{(\Lambda)} \Rhp$ is a $\Lambda$-language, then $\cL^\Lambda$ is a language of weakly-restricted $\Lambda$-power series and $(\cL^\Lambda)^{|\bN}=\cL$.
\end{remark}

\begin{lemma}
    If $\cL$ is a language of weakly restricted $\bN$-power series with coefficients from $\bK\Lhp \fM \times \Var^{(\Lambda)}\Rhp$ which is a $\Lambda$-language and closed under compositions, then $\cL^{\Lambda}$ is closed under classical compositions.
    \begin{proof}
        Let $\tau_{\var{y}, \var{x}}f \in \cL^\Lambda(\var{x})$. Since $\cL^\Lambda$ is closed under shifting classical variables, we can assume that $\var{y}$ are all the classical variables of $\tau_{\var{y},\var{x}}f$, but then the fact that $\cL$ is closed under compositions immediately yields that for any assignment $g \in \big(\cL^\Lambda(\var{z})^{\prec 1}\big)^{\var{y}}$, $f(\var{x}\setminus \var{y}, g) \in \cL^\Lambda$.
    \end{proof}
\end{lemma}

\begin{proof}[Proof of Theorem~\ref{thm:mainT0}]
    By Lemma~\ref{lem:alg_generated_by_tc} we can assume that $\cF$ is a $\bZ$-algebra of restricted $\Lambda$-power series.

    Let $\cL=\cF^{|\bN}$ be as in Definition~\ref{def:syntactical_equiv}. By Proposition~\ref{prop:main_for_bN}, the closure $\cL^*$ (resp.\ $\cL^{**}$) of $\cL$ under compositions (resp.\ compositions and implicit functions) is closed under truncations.
    
    Note that said $\cL_1$ and $\cL_2$ the closures under shifting classical variables of $\cL^*$ and $\cL^{**}$ respectively, we have that $\cL_1$ and $\cL_2$ are still closed under truncations, and are $\Lambda$-languages.

    Now $\cL_1^{\Lambda}$ (resp.\ $\cL_2^{\Lambda}$) is language of weakly restricted $\Lambda$-power series and is the closure of $\cF$ and under classical compositions (resp.\ classical compositions and implicit functions), so we are done.

    Finally under the additionally hypotheses of the ``furthermore'', $\cL=\cF^{|\bN}$ satisfies the additional hypotheses of the ``furthermore'' in Proposition~\ref{prop:main_for_bN}, so $\cL^*$, $\cL^{**}$, $\cL_1$, $\cL_2$, and $\cL_1^{\Lambda}$, $\cL_2^{\Lambda}$ are all closed under monomial division. 
\end{proof}

\section{Serial power-bounded structures}\label{sec:serial}

Recall that a theory $T$ in a language $L$ is called a \emph{Skolem theory} (cf \cite[Sec.~3.1]{hodges1993MT}) if given tuples of variables $\var{x}$ and $\var{y}$ and a formula $\varphi(\var{x}, \var{y})$ in $L$, there is a $L$-term $t(\var{x})$ such that $T \models \forall \var{x},\; \bigg(\big(\exists \var{y}, \varphi(\var{x}, \var{y})\big) \rightarrow \varphi(\var{x}, t(\var{x}))\bigg)$. Such a term $t(\var{x})$ is called a \emph{Skolem} function for $\varphi(\var{x}, \var{y})$. We will need the following variant of the notion of Skolem theory.

\begin{definition}\label{defn:piecewise_Skolem}
    We call a theory $T$ in a language $L$ a \emph{piecewise Skolem theory} if given a tuple of variables $\var{x}$, a variable $\var{y}$ and a formula $\varphi(\var{x}, \var{y})$ in $L$, there are finitely many $L$-terms $\{t_0(\var{x}), \ldots, t_{n-1}(\var{x})\}$ such that 
    \[T \models \forall \var{x},\; \bigg(\big(\exists \var{y}, \varphi(\var{x}, \var{y})\big) \rightarrow \bigvee_{i<n}\varphi(\var{x}, t_i(\var{x}))\bigg).\]
\end{definition}

\begin{remark}
    A theory $T$ is a piecewise Skolem theory if and only if it is model complete and universally axiomatized. In particular such a theory $T$ eliminates quantifiers and moreover every substructure of a model of $T$ is a model of $T$. 
\end{remark}
		
The following definition is partially modelled upon the definition of GQA in \cite{rolin2015quantifier}. 
		
\begin{definition}\label{def:serial}
    Let $\bK_L=(\bK, L)$ be a power-bounded o-minimal field in some functional language expansion $L$, consisting for each $n$ and $r \in (\bK^{>0})^n$ of a $\bK$-algebra $L(r)$ of $n$-ary smooth functions on $\prod_{i<n} (0,r_i)$ and containing the restrictions of the coordinate projections.
	Suppose furthermore that $L$ satisfies:
	\begin{enumerate}[label=(S\arabic*), ref=(S\arabic*)]
		\item\label{Serial:analyticity} for every $f \in L(r)$, $x \in \prod_{i<n} [0, r_i]$ and $\sigma \in \{-1, 0, +1\}^n$, there are $r'$ and $f_{x, \sigma}\in L(r')$ such that $f(x+\sigma z) = f_{x, \sigma} (z)$ for all small enough $z>0$;
		\item\label{Serial:derivatives} the $\bK$ algebra $L_n$ of germs at the origin of functions in $L(r)$ for some $r \in (\bK^{>0})^n$ is closed under renormalized partial derivatives, that is if $f \in L_n$, then $x_i\partial_{i}f\in L_n$ where $x_i$ is the germ of the projection on the $i$-th coordinate;
		\item\label{Serial:Skolem} 
		$\bK_L$ has a piecewise Skolem Theory.
	\end{enumerate}		
	Let $\Lambda$ be the field of exponents of $\bK_L$. Let for every $n$, $\cT=\cT_n$ be an injective algebra embedding $\cT: L_n \to \bK \Lhp \var{x}^{\Lambda} \Rhp$ satisfying $\cT(x_i)= \var{x}_i$ and $\cT(x_i\partial_i f) = \var{x}_i\partial_{i}\cT(f)$ for any coordinate projection $x_i: \bK^n \to \bK$.
			
	Given a multiplicatively written ordered $\Lambda$-vector space $\fM$, we can interpret each $f \in L(r)$ on $\bK\Lhp \fM \Rhp$ by setting for each $x \in \prod_{i<n} [0, r_i]_\bK$ and $\varepsilon \in \bK \Lhp \fM^{<1}\Rhp^n$ such that $0<x+\varepsilon <r$ component-wise,
    \[f(x+\varepsilon):=\cT(f_{x, \sign(\varepsilon)})(|\varepsilon|).\]
    We denote by $\bK \Lhp \fM\Rhp^{\cT}$, the field $\bK\Lhp \fM \Rhp$ expanded with such interpretation of symbols in $L$.
            
    We will say that $(\bK_L, \cT)$ is \emph{$\fM$-serial} if $\bK_L$ satisfies \ref{Serial:analyticity}, \ref{Serial:derivatives} and \ref{Serial:Skolem} and furthermore
	\begin{enumerate}[label=(S\arabic*), ref=(S\arabic*), resume]
	   \item\label{Serial:Seriality} $\bK \preceq \bK\Lhp \fM \Rhp^\cT$.
	\end{enumerate}
			
	We say that $(\bK_L, \cT)$ is \emph{serial} if it is $\fM$-serial for every multiplicatively written ordered $\Lambda$-vector space $\fM$. If the image of $\cT_n$ is truncation-closed for every $n$, then we say that $(\bK_L, \cT)$ is \emph{truncation-closed}. We denote by $\cT(L)$ the (closure under variable-reindexings of the) family of power series in the image of $\cT$.
\end{definition}
		
\begin{remark}
	If each $L_n$ contains (the germ of) $1/x_i$ for each coordinate function $x_i$ (as would usually be the case since $\bK_L$ is assumed to have a Skolem theory), then the condition on $\cT$ can be simplified to that of being a truncation-closed differential algebra embedding.
\end{remark}
		
\begin{remark}
	Note that if $(\bK_L, \cT)$ is truncation closed, then $\cT(L)$ is an almost fine $\bK$-algebra of generalized series.
\end{remark}
		
\begin{lemma}
	Suppose $(\bK_L,\cT)$ is $\fM$-serial and truncation-closed and $\bK \cup \fM \subseteq \bE \subseteq \bK \Lhp \fM\Rhp^{\cT}$ is a truncation-closed subring. Then $\bE$ is $L$-closed if and only if it is $\cT(L)$-closed. 
	\begin{proof}
		Suppose $\bE$ is $\cT(L)$-closed. Let $f \in L(r)$ for some $r \in (\bK^{>0})^n$. If $x \in \bE^n$ is such that $0<x_i<r_i$, we can write $x=\tilde{x}+\sigma \varepsilon$ for some $\tilde{x} \in \prod_{i<n}[0,r_i]_{\bK}$, some $\sigma\in \{-1,0,1\}^n$ and some tuple $\varepsilon$ of positive infinitesimals. Since $\bE$ contains $\bK$, we have $\tilde{x} \in \prod_{i<n} [0, r_i]_{\bK} \subseteq \bE^n$ and since $\bE$ is a $\cT(L)$-closed subring it follows that also $\varepsilon \in \bE^n$. Thus, since $\bE$ is $\cT(L)$-closed we have $f(x)=f_{\tilde{x}, \sigma}(\varepsilon) \in \bE$.
				
		Conversely if $\bE$ is $L$-closed, then it must clearly be $\cT(L)$-closed.
	\end{proof}
\end{lemma}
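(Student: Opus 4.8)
The plan is to prove the two implications separately; the forward one (``$L$-closed $\Rightarrow$ $\cT(L)$-closed'') is immediate, while the reverse one is the substantive direction. For the forward direction I would note that every power series occurring in $\cT(L)$ is a reindexing of $\cT(f)$ for the germ at the origin of some $f\in L(r)$, and that, by the very definition of the interpretation in Definition~\ref{def:serial} applied with standard part $0$ and sign $+1$, one has $\cT(f)(\varepsilon)=f_{0,+1}(\varepsilon)=f(\varepsilon)$ for any tuple $\varepsilon$ of positive infinitesimals of $\bK\Lhp\fM\Rhp$. Since reindexings are respected by the interpretation, an $L$-closed $\bE$ is then closed under all of $\cT(L)$.

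For the reverse direction, suppose $\bE$ is $\cT(L)$-closed, fix $f\in L(r)$ with $r\in(\bK^{>0})^n$, and take $x\in\bE^n$ with $0<x_i<r_i$ for all $i$. First I would decompose $x=\tilde x+\sigma\varepsilon$, where $\tilde x\in\prod_{i<n}[0,r_i]_{\bK}$ is the tuple of standard parts, $\sigma\in\{-1,0,+1\}^n$ records the signs of the $x_i-\tilde x_i$, and $\varepsilon$ is the tuple of non-negative infinitesimals $|x_i-\tilde x_i|$. Since $\bK\subseteq\bE$ we get $\tilde x\in\bE^n$ and $\sigma_i\in\bE$, and since $\bE$ is a ring containing each $x_i$ we get $\varepsilon_i=\sigma_i(x_i-\tilde x_i)\in\bE$, so $\varepsilon\in\bE^n$. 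Next I would invoke \ref{Serial:analyticity} to obtain $r'$ and $f_{\tilde x,\sigma}\in L(r')$ with $f(\tilde x+\sigma z)=f_{\tilde x,\sigma}(z)$ for small $z>0$; its germ at the origin lies in $L_n$, hence $\cT(f_{\tilde x,\sigma})\in\cT(L)$. Finally, the interpretation in Definition~\ref{def:serial} gives $f(x)=f(\tilde x+\sigma\varepsilon)=\cT(f_{\tilde x,\sigma})(\varepsilon)$, and since $\bE$ is $\cT(L)$-closed and $\varepsilon\in\bE^n$ this value lies in $\bE$. Thus $\bE$ is $L$-closed.

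I do not expect a genuine obstacle: the argument is essentially bookkeeping. The only point deserving care is the legitimacy of the decomposition $x=\tilde x+\sigma\varepsilon$ — one checks that each standard part lands in the \emph{closed} interval $[0,r_i]$, so that \ref{Serial:analyticity} applies, and that the degenerate coordinates, where $x_i$ is already in $\bK$ and $\varepsilon_i=0$, $\sigma_i=0$, are permitted — together with matching ``$\bE$ is $\cT(L)$-closed'' (closure under the generalized-power-series interpretation of the image of $\cT$ as in Definition~\ref{defn:genps_interpretation}) against the evaluation $\cT(f_{\tilde x,\sigma})(\varepsilon)$, which is exactly evaluation at a tuple of positive infinitesimals and therefore within the scope of that closure.
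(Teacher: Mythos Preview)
Your proposal is correct and follows essentially the same approach as the paper: decompose $x=\tilde{x}+\sigma\varepsilon$ into standard part, sign, and infinitesimal part, observe that $\tilde{x}$ and $\varepsilon$ lie in $\bE$, and conclude via $f(x)=\cT(f_{\tilde{x},\sigma})(\varepsilon)$. Your write-up is in fact slightly more careful than the paper's, which dispatches the direction $L$-closed $\Rightarrow$ $\cT(L)$-closed with a single ``clearly'' and does not comment on the boundary/degenerate cases you flag.
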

		
\begin{proposition}\label{prop:tc-ppty}
	Suppose $(\bK_L, \cT)$ is $\fM$-serial and truncation-closed and $\bK \subseteq \bE \subseteq \bK \Lhp \fM\Rhp$ is a truncation-closed elementary substructure. If $x \in \bK \Lhp \fM \Rhp$ is such that for all $\fm \in \fM$, $x |\fm \neq x \rightarrow x|\fm \in \bE$, then $\bE \langle x \rangle_L$ is truncation-closed.
	\begin{proof}
		Notice that $\fN:=\fM \cap\bE$ must be a subgroup of $\fM$ and that $\bE = \langle X \rangle_{\cT(L)}$ for some truncation-closed subset $X$ (e.g.\ $X= \bE$).
		We distinguish two cases. If $\Supp x$ has a minimum $\fm$, then $\bE \langle x\rangle_L= \bE \langle \fm \rangle= \langle X\cup \fN'\rangle_{\cT(L)}$ where $\fN':= \fN \fm^{\Lambda}$ and we can conclude by Theorem~\ref{introthm:A}.
				
		If instead $\Supp x$ has no minimium, then $\bE \langle x \rangle_L = \bE \langle x \rangle_{\cT(L)} = \langle X\cup\{x\}\rangle_{\cT(L)}$ and we are done once again by Theorem~\ref{introthm:A}. 
	\end{proof}
\end{proposition}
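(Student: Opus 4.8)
The substantive content is Theorem~\ref{thm:mainlemma}, applied to the almost fine algebra $\cT(L)$; the work is in arranging the bookkeeping so that Theorem~\ref{thm:mainlemma} actually computes $\bE\langle x\rangle_{L}$. I would start with two preliminary observations. First, $\fN:=\fM\cap\bE$ is an ordered $\Lambda$-subspace of $\fM$: it is a subgroup because $\bE$ is a subfield, and it is closed under $\lambda$-th powers because $t\mapsto t^{\lambda}$ is $L$-definable and $\bE$ is $L$-closed. Second, $\bE\subseteq\bK\Lhp\fN\Rhp$, i.e.\ every monomial occurring in the support of an $f\in\bE$ already lies in $\bE$: given $\fm\in\Supp f$, both $f|\fm$ and, if $\fm$ has an immediate predecessor $\fm^{-}$ in the reverse-well-ordered set $\Supp f$, also $f|\fm^{-}$ lie in $\bE$ (as $\bE$ is truncation-closed), and their difference $f|\fm^{-}-f|\fm$ equals $k_{\fm}\fm$ with $k_{\fm}\in\bK^{\neq 0}$; if $\fm=\min\Supp f$ one uses $f-f|\fm=k_{\fm}\fm$ instead. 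The same argument, now invoking the hypothesis ``$x|\fn\neq x\Rightarrow x|\fn\in\bE$'', shows that every monomial of $\Supp x$ other than $\min\Supp x$ (if that exists) lies in $\fN$.

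Next I would note that $(\bK_{L},\cT)$ is $\fN^{*}$-serial and truncation-closed for every ordered $\Lambda$-subspace $\fN^{*}\leq\fM$. Indeed, since the interpretation of each $L$-symbol on $\bK\Lhp\fM\Rhp^{\cT}$ is given through germs and $\cT$, it restricts to $\bK\Lhp\fN^{*}\Rhp$ when $\fN^{*}$ is a $\Lambda$-subspace, so $\bK\Lhp\fN^{*}\Rhp^{\cT}$ is an $L$-substructure of $\bK\Lhp\fM\Rhp^{\cT}$; and $T=\Th(\bK_{L})$ being universally axiomatized and model complete as a piecewise Skolem theory, $\bK\Lhp\fN^{*}\Rhp^{\cT}$ is a model of $T$ with $\bK\preceq\bK\Lhp\fN^{*}\Rhp^{\cT}$. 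Truncation-closedness of $\cT$ is unaffected.

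Now I split into the two cases of the Proposition. If $\Supp x$ has a least monomial $\fm$, then $x=x|\fm+k_{\fm}\fm$ with $x|\fm\in\bE$ and $k_{\fm}\in\bK$, so $\bE\langle x\rangle_{L}=\bE\langle\fm\rangle_{L}$; set $\fN':=\fN\fm^{\Lambda}$ and $X:=\bE\cup\{\fm\}$, which is truncation-closed and, by the first paragraph, contained in $\bK\Lhp\fN'\Rhp$. Theorem~\ref{thm:mainlemma} applied inside $\bK\Lhp\fN'\Rhp$ gives that $\langle X\cup\fN'\rangle_{\cT(L)}$ is truncation-closed, and it remains to identify this ring with $\bE\langle\fm\rangle_{L}$. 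On one hand it is a $\cT(L)$-closed subring containing $\bK$, hence $L$-closed (the proof of the Lemma preceding this Proposition uses only $\bK\subseteq\bE$), so it contains $\dcl_{L}(\bE\cup\{\fm\})=\bE\langle\fm\rangle_{L}$. On the other hand the ring and $\cT(L)$ operations are $L$-definable, so $\langle X\cup\fN'\rangle_{\cT(L)}\subseteq\dcl_{L}(\bE\cup\{\fm\}\cup\fN')=\dcl_{L}(\bE\cup\{\fm\})$, the last equality holding because $\fN\subseteq\bE$ and powers are $L$-definable. If instead $\Supp x$ has no least element, then $x\in\bK\Lhp\fN\Rhp$ by the first paragraph and $X:=\bE\cup\{x\}$ is truncation-closed; Theorem~\ref{thm:mainlemma} inside $\bK\Lhp\fN\Rhp$ shows that $\langle X\cup\fN\rangle_{\cT(L)}=\langle\bE\cup\{x\}\rangle_{\cT(L)}$ is truncation-closed, and the same argument identifies it with $\bE\langle x\rangle_{L}$.

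I expect the only delicate point to be precisely this bookkeeping: Theorem~\ref{thm:mainlemma} always adjoins the full monomial group of the ambient Hahn field, so one must descend to the Hahn field over the small group $\fN$ (or $\fN'$) before applying it, which requires knowing in advance that both $\bE$ and the new generator already live there — which is exactly what the support-closure argument in the first paragraph provides.
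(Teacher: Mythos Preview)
Your proof is correct and follows the same route as the paper's: split on whether $\Supp x$ has a minimum, and in each case apply Theorem~\ref{thm:mainlemma} to the almost fine algebra $\cT(L)$. The paper's proof simply asserts the equalities $\bE\langle x\rangle_L=\langle X\cup\fN'\rangle_{\cT(L)}$ (resp.\ $=\langle X\cup\{x\}\rangle_{\cT(L)}$) and invokes the theorem; you make explicit the two points the paper leaves to the reader, namely (i) that one must pass to the smaller Hahn field $\bK\Lhp\fN'\Rhp$ (resp.\ $\bK\Lhp\fN\Rhp$) before applying Theorem~\ref{thm:mainlemma}, since that theorem forces the full ambient monomial group into the closure, and (ii) why the supports of $\bE$ and of $x$ already sit in that smaller group. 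Your paragraph on $\fN^{*}$-seriality is correct but a slight over-investment: what you actually use is only that $\cT(L)$-closed subrings containing $\bK$ are $L$-closed (which, as you note, the preceding Lemma proves from $\bK\subseteq\bE$ alone) together with the piecewise-Skolem property of $T$; full seriality of the smaller group is not needed for the identification.
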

		
\begin{lemma}
	Suppose $(\bK_L, \cT)$ is $\fM$-serial for every $\fM$ in some class $\cC$ and $L_0 \subseteq L$. Then there is an expansion by definitions $L_0\subseteq L_* \subseteq L$ such that $(\bK|_{L_*}, \cT|_{L_*})$ is $\fM$-serial for every $\fM\in \cC$.
	\begin{proof}
       Let $L_*$ consist of the function symbols in $L$ that are definable in $\bK_{L_0}$. Since Skolem functions are definable in o-minimal structures, it follows that the reduct $\bK|_{L_*}$ has a piecewise Skolem theory.
	\end{proof} 
\end{lemma}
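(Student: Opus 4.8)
The plan is to let $L_*$ collect the function symbols $f\in L$ whose interpretation $f^{\bK}$ is definable in the reduct $\bK|_{L_0}$ (with parameters from $\bK$), together with symbols for definable Skolem functions of $\bK|_{L_0}$ (which are again $\bK|_{L_0}$-definable). Then $\bK|_{L_*}$ is a definitional \emph{expansion} of $\bK|_{L_0}$, and since $\bK_L$ has a piecewise Skolem theory — hence eliminates quantifiers — every $\bK|_{L_0}$-definable function is piecewise given by $L$-terms, so $\bK|_{L_*}$ is also a definitional \emph{reduct} of $\bK_L$; this is the sense in which $L_0\subseteq L_*\subseteq L$. Consequently $\bK|_{L_*}$ and $\bK|_{L_0}$ have exactly the same definable sets, so $\bK|_{L_*}$ is o-minimal, and its field of exponents $\Lambda_*$ is that of $\bK|_{L_0}$, a subfield of $\Lambda$; since the members of $\cC$ are ordered $\Lambda$-vector spaces they are a fortiori $\Lambda_*$-vector spaces, so $\fM$-seriality of $(\bK|_{L_*},\cT|_{L_*})$ for $\fM\in\cC$ is meaningful. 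One then takes $\cT|_{L_*}$ to be the restriction of each $\cT_n$ to the subalgebra $(L_*)_n\subseteq L_n$ of germs of $\bK|_{L_0}$-definable functions; this is again an injective algebra embedding into $\bK\Lhp\var{x}^{\Lambda_*}\Rhp$ with $\cT(x_i)=\var{x}_i$ and $\cT(x_i\partial_i f)=\var{x}_i\partial_i\cT(f)$, the latter being meaningful once we know $(L_*)_n$ is closed under $f\mapsto x_i\partial_i f$.

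The verification of the serial axioms for $(\bK|_{L_*},\cT|_{L_*})$ is then mostly a matter of observing that the relevant operations preserve $\bK|_{L_0}$-definability. For \ref{Serial:analyticity}: given $f\in L_*(r)$, $x\in\prod_i[0,r_i]$ and $\sigma\in\{-1,0,1\}^n$, the function $f_{x,\sigma}$ produced by \ref{Serial:analyticity} for $\bK_L$ is $z\mapsto f(x+\sigma z)$ near $0$, a composition of the $\bK|_{L_0}$-definable $f$ with affine maps with parameters from $\bK$, hence is $\bK|_{L_0}$-definable, so $f_{x,\sigma}\in L_*(r')$. For \ref{Serial:derivatives}: the functions in $L_*(r)$ are smooth and the derivative of a $\bK|_{L_0}$-definable smooth function is again $\bK|_{L_0}$-definable (it is the pointwise limit of difference quotients, expressible by an $L_0$-formula in the ordered field $\bK$), so $(L_*)_n$ is closed under $f\mapsto x_i\partial_i f$, as needed above. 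Finally \ref{Serial:Seriality}: for $\fM\in\cC$, the serial interpretation of an $f\in L_*(r)$ on $\bK\Lhp\fM\Rhp$ prescribed by $\cT|_{L_*}$ (Definition~\ref{def:serial}) is literally the restriction of the one prescribed by $\cT$, so $\bK\Lhp\fM\Rhp^{\cT|_{L_*}}$ is the $L_*$-reduct of $\bK\Lhp\fM\Rhp^{\cT}$; since $\bK_L\preceq\bK\Lhp\fM\Rhp^\cT$ by hypothesis and a reduct of an elementary extension is an elementary extension, $\bK|_{L_*}\preceq\bK\Lhp\fM\Rhp^{\cT|_{L_*}}$.

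The one genuinely substantive point is \ref{Serial:Skolem}, that $\Th(\bK|_{L_*})$ is piecewise Skolem. Here one invokes that an o-minimal expansion of an ordered field admits \emph{definable} Skolem functions: given an $L_*$-formula $\varphi(\var{x},\var{y})$ with $\var{y}$ a single variable, o-minimality and uniform finiteness give a bound $N$ such that each fibre $\{y:\varphi(\bar a,y)\}$ is a union of at most $N$ points and open intervals, and one writes down finitely many $\bK|_{L_*}$-definable functions $t_0(\var{x}),\ldots,t_{n-1}(\var{x})$ — picking, according to the shape of the $i$-th cell, an endpoint offset by $1$, a midpoint, or the isolated point — so that $\exists\var{y}\,\varphi(\var{x},\var{y})\to\bigvee_{i<n}\varphi(\var{x},t_i(\var{x}))$ holds; the disjunction over several terms is unavoidable since the fibre may be empty or consist of several cells. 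As these $t_i$ are definable in $\bK|_{L_*}$, hence in $\bK|_{L_0}$, they lie in $L_*$ by construction, which is exactly the piecewise Skolem property. I expect this bookkeeping — keeping the definable Skolem functions inside $L_*$ while keeping $L_*$ within the definitional closure of $L$ — to be the only delicate point; the checks of \ref{Serial:analyticity}, \ref{Serial:derivatives}, \ref{Serial:Seriality} and of the $\cT$-axioms are routine inheritance-by-reduct arguments.
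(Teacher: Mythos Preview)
Your approach is essentially the paper's: take $L_*$ to consist of the $L_0$-definable function symbols from $L$ and invoke definable Skolem functions in o-minimal fields for \ref{Serial:Skolem}, with the remaining axioms inherited by restriction. You are simply more explicit than the paper's one-line proof---in particular you spell out the verification of \ref{Serial:analyticity}, \ref{Serial:derivatives}, \ref{Serial:Seriality} and take the precaution of formally adjoining the definable Skolem functions to $L_*$ (interpreting $L_*\subseteq L$ as definitional containment), a point the paper leaves tacit.
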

		
\begin{corollary}\label{cor:anreducts_serial}
	If $(\bK_L, \cT)$ is $\fM$-serial and the image of each $\cT_n$ lies within the Puiseux series $\bigcup_{m \in \bN} \bK \Lhp \var{z}^{1/(m+1)\bZ}\Rhp$, then every reduct of $(\bK_L, \cT)$ has an expansion by definitions that is serial and truncation-closed.
    \begin{proof}
        It suffices to observe that if the image of each $\cT_n$ lies within the $\bK$-algebra $\bigcup_{n \in \bN} \bK \Lhp \var{z}^{1/(m+1)\bZ}\Rhp$, then $(\bK_L, \cT)$ is necessarily truncation-closed.
    \end{proof}
\end{corollary}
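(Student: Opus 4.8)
The plan is to use the previous Lemma to reduce the statement to a truncation-closedness check, and then to extract that check from a direct description of truncations inside the Puiseux-type ring. Given a reduct of $(\bK_L,\cT)$ to a sublanguage $L_0\subseteq L$, the previous Lemma already supplies an expansion by definitions $L_0\subseteq L_*\subseteq L$ for which $(\bK|_{L_*},\cT|_{L_*})$ is $\fM$-serial for every $\fM$, hence serial. So the only thing left is to see that this expansion is truncation-closed, i.e.\ that the image of each $(\cT|_{L_*})_n$ is closed under truncations; and since this image is contained in $\mathrm{Im}(\cT_n)\subseteq\bigcup_{m}\bK\Lhp\var{z}^{1/(m+1)\bZ}\Rhp$, it suffices to control truncations of series in the latter ring.

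The first step is the combinatorial observation that every truncation of a series in $\bigcup_m\bK\Lhp\var{z}^{1/(m+1)\bZ}\Rhp$ is a Laurent polynomial in fractional powers of the $\var{z}_i$. Indeed, if $f\in\bK\Lhp\var{z}^{1/N\bZ}\Rhp$ and $\fm=\var{z}^\gamma$, then $\Supp(f|\fm)$ is the intersection of the well-partial-order $\Supp f$ with the lower box $\{\delta\in(\tfrac1N\bZ)^n:\delta\le\gamma,\ \delta\neq\gamma\}$. Each coordinate of an element of that box is bounded above by $\gamma_i$ and lies in the discrete set $\tfrac1N\bZ$, so a weakly increasing sequence of distinct elements of the box would stabilize in every coordinate, hence be eventually constant — impossible. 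Thus $\Supp(f|\fm)$ has no infinite weakly increasing sequence, and being a subset of a well-partial-order it must be finite, so $f|\fm$ is a finite $\bK$-linear combination of monomials $\var{z}^\delta$ with $\delta\in(\tfrac1N\bZ)^n$.

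The second step is to see that each such monomial lies in the relevant image. Since that image is a $\bK$-algebra, it suffices to have $\var{z}_i^{1/N}$ and $\var{z}_i^{-1/N}$ in it, and here I would use that the reduct $\bK_{L_0}$, being an o-minimal field, already defines the germs at $0$ of $x\mapsto x^{1/N}$ and $x\mapsto 1/x$. After adjoining these to $L_*$ — legitimate once one has, if necessary, first replaced $(\bK_L,\cT)$ by its expansion by definitions to the language of all definable functions, with $\cT$ extended by the Puiseux expansions of the definable germs, an extension whose images still lie in the Puiseux ring in the situations being covered — these germs lie in $(L_*)_1\subseteq(L_*)_n$. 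Since $(\cT|_{L_*})_n$ is an injective $\bK$-algebra embedding sending $x_i$ to $\var{z}_i$, it must send an $N$-th root of the positive germ $x_i$ to the positive $N$-th root of $\var{z}_i$ in the real closed field $\bK\Lhp\var{z}^\Lambda\Rhp$, namely $\var{z}_i^{1/N}$, and the reciprocal germ to $\var{z}_i^{-1}$. This puts $\var{z}^\delta$, and hence $f|\fm$, in the image, as required.

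I expect the main obstacle to be exactly this bookkeeping in the second step: ensuring that the reciprocal and integer-root germs needed to realize the finitely many monomials of a truncation are genuinely present in the chosen expansion by definitions, and that the algebra embedding $\cT_n$ has no freedom to send them anywhere but the expected Puiseux monomials. In the case that actually arises — $\bR_{an}$ and its reducts, where $\mathrm{Im}(\cT_n)$ consists of non-singular integer-exponent series — this degenerates: truncations are ordinary polynomials, and polynomials already lie in $\mathrm{Im}(\cT_n)$ because each $L_n$ is a $\bK$-algebra containing the coordinate projections and has its (definable) Taylor coefficients available by o-minimality, so no root functions are needed at all.
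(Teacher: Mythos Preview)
Your approach is essentially the same as the paper's, just unpacked. The paper's one-sentence proof rests on precisely the observation you spell out in your first step: since exponents lie in a lattice $\tfrac{1}{N}\bZ^n$, any truncation has support bounded above (by the truncation point) and below (by the well-partial-order condition on $\Supp f$), hence lies in a finite box in $\tfrac{1}{N}\bZ^n$; so truncations are Laurent polynomials in the $\var{z}_i^{1/N}$.

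Your second step --- whether those fractional-power monomials actually lie in the image of the restricted $\cT$ --- is a point the paper does not address at all; it simply asserts that landing in the Puiseux ring forces truncation-closedness. You are right that this needs justification. The clean resolution is that once the root and reciprocal functions are present as symbols in $L$ (as they explicitly are in the $\bR_{an}$ example, where the language is taken to be that of $\big(\bR_{an},(\sqrt[n]{-})_{n>0},(-)^{-1}\big)$), they are definable in any o-minimal reduct $L_0$ and hence survive into the $L_*$ produced by the previous Lemma; then $\var{z}_i^{\pm 1/N}$ lie in the image of $(\cT|_{L_*})_n$ and the Laurent polynomials are covered. Your proposed fix of first enlarging $L$ and extending $\cT$ is the right instinct for the fully general statement, but as you acknowledge, that extension is not automatic and is where genuine work would sit. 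In the paper's only application the issue evaporates exactly as you describe in your final paragraph.
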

		
\begin{example}
	Every real closed field $\bK$ has an expansion by definable functions that is serial. 
	Consider the language $L'$ consisting of $+$, $\cdot$, $(-)^{-1}$, $\{(-)^{1/(n+1)}: n \in \bN\}$ and for every $n \in \bN$, $\sigma \in \{0,\pm 1\}^{n}$, and $r \in (\bK^{>0})^n$ for which it exists, a smooth function $f : \prod_{i}(0, r_i) \to \bK$ satisfying
	\[\sigma_0 \var{x}_0 +  \sigma_1 \var{x}_1 f(\var{x})+ \sigma_2 \var{x}_2 f(\var{x})^2 + \cdots +\sigma_{n-1}\var{x}_{n-1} f(\var{x})^{n-1} = f(\var{x}),\]
	along with all of its partial derivatives. 
	Then for each $r \in (\bK^{>0})^n$, let $L(r)$ consists of the algebra of $L'$-term-definable functions. Note that $L'$ is closed under partial derivatives and satisfies \ref{Serial:analyticity} and \ref{Serial:derivatives}. To see it has a piecewise Skolem theory \ref{Serial:Skolem}, observe that in the language $L'$ the class of real closed fields is universally axiomatized and eliminates quantifiers, hence all definable functions are piecewise term-definable.
            
	Finally note that the $\cT_n$ are defined naturally on the germs of the $f$s described above as they arise as implicit functions for polynomial equations and that \ref{Serial:Seriality} is satisfied for all $\fM$ because for all divisible ordered Abelian groups $\fM$, $\bK \Lhp \fM \Rhp$ is a real closed field and RCF is model complete.
\end{example}
		
\begin{example}
    Recall that the structure $\bR_{an}$ is given by the ordered field of reals expanded by function symbols $f$ for each function $f:[-1,1]^n \to \bR$ given by the restriction $g|_{[-1,1]^n}$ of some analytic function $g: U \to \bR$ for some open $U\supseteq [-1,1]^n$. The symbol $f$ is interpreted as $g|_{[-1,1]^n}$ on $[-1,1]^n$ and is set to have value $0$ on $\bR^n \setminus [-1,1]^n$.

    As observed in \cite{Dries1986Generalization}, the fact that $\bR_{an}$ is model complete is essentially Gabrielov's theorem of the complement \cite[Thm.~1]{gabrielov1968projections}. The proof also yields o-minimality of $\bR_{an}$. 
    Moreover $\bR_{an}$ eliminates quantifiers when expanded with a function symbol $D$ for restricted division, i.e.\ $D(x,y)=x/y$ for $(x,y) \in [0,1]^2\setminus [0,1]\times \{0\}$ and $D=0$ on $\bR^2 \setminus [0,1] \times (0,1]$ (\cite[(4.3) Main Result]{denef1988p}). 
        
	By \cite[Thm.~2.14]{dries1994elementary} the structure $\big(\bR_{an}, (\sqrt[n]{-})_{n>0}, (-)^{-1}\big)$ is serial with the natural $\cT$. In particular, by Corollary~\ref{cor:anreducts_serial} every reduct of $\bR_{an}$ has an expansion by definitions that is serial.
\end{example}

\begin{example}
	\cite[Prop.~10.4]{dries2000field}, can be restated by saying that the structure $\bR_{\cG}$ is interdefinable with a $\fM$-serial structure for all $\fM$ with finitely many Archimedean classes.
\end{example}
		
\begin{question}\label{quest:GQAserial}
	Let $\cA$ be a generalized quasianalytic algebra as defined in \cite{rolin2015quantifier}. Is $\bR_\cA$ interdefinable with a serial structure?
\end{question}
		
\begin{remark}
	Notice that if $\lambda$ is greater than every ordinal embeddable in the field of exponents $\Lambda$ then $\bK \Lhp \fM \Rhp_\lambda\subseteq \bK \Lhp \fM \Rhp^{\cT}$ is an $L$-substructure. We denote the resulting expansion of $\bK \Lhp \fM \Rhp_\lambda$ to a $L$-structure by $\bK \Lhp \fM \Rhp_\lambda^{\cT}$.
\end{remark}
			
\section{\texorpdfstring{$T$}{T}-convexity, wim-constructible extensions, and \texorpdfstring{$T$-$\lambda$}{T-λ}-spherical completions}\label{sec:Spherical_completion_review}

Recall that a $T$-convex valuation ring $\cO$ on an o-minimal expansion of a field $\bE$, is a convex subring $\cO \subseteq \bE$, closed under total unary $\emptyset$-definable continuous functions (\cite[(2.7)]{dries1995t}).
We will denote by $\co$ the maximal ideal of $\cO$ and write the associated \emph{dominance relation} on $\bE\setminus \{0\}$ as $x \preceq_\cO y \Leftrightarrow x/y\in \cO$. Similarly for $x \asymp y \Leftrightarrow (x \preceq y\; \& \; y\preceq x)$, $x \prec y \Leftrightarrow (x\preceq y\; \& \; y \not \preceq x)$, and $x \sim y \Leftrightarrow x-y \prec x$.
The value group of $(\bE,\cO)$ will be denoted by $(\val(\bE, \cO), +):=(\bE^{\neq 0}/(\cO \setminus \co), \cdot)$ or $\val_\cO(\bE)$ and the valuation by $\val_{\cO} : \bE \to \val(\bE, \cO) \cup \{\infty\}$. As usual the value group will be ordered setting $\val(\bE, \cO)^{\ge0}=\val_\cO(\cO^{\neq 0})$ and $\val_\cO(0)=\infty$. In particular $\val_{\cO}(x)>\val_{\cO}(y)$ if and only if $x \prec_{\cO} y$.
The residue field $\cO/\co$ will be denoted by $\res(\bE, \cO)$ or $\res_{\cO}(\bE)$ and the quotient map by $\res_{\cO}: \cO \to \cO/\co$. The residue-value sort $(\bE^{\neq0}/(1+\co), \cdot)$ will be denoted by $\rv_\cO(\bE)$ and the quotient map by $\rv_{\cO} : \bE^{\neq 0} \to \rv_\cO(\bE)$. The subscript $\cO$ may be omitted if there is no ambiguity.

The common theory of the expansions $(\bE, \cO)$ of models of $T$ by a predicate $\cO$ for a $T$-convex valuation ring is denoted by $T_\convex^-$ and the theory $T_\convex$ is defined as $T_\convex^- \cup \{\exists x \notin \cO\}$. By results of van den Dries and Lewenberg (\cite[(3.10), (3.13), and (3.14)]{dries1995t}), the theory $T_\convex$ is complete and weakly o-minimal and if $T$ is universally axiomatized and eliminates quantifiers then $T_\convex$ eliminates quantifiers.

Also recall that an elementary substructure $\bK\preceq \bE$ is said to be \emph{tame}, denoted $\bK \preceq_\tame \bE$, when for every definable subset $S$ of $\bE$, if $S \cap \bK$ is bounded, then $S\cap \bK$ has a supremum in $\bK$. By a theorem of Marker and Steinhorn \cite{marker1994definable}, $\bK \preceq_\tame \bE$ if and only if for every tuple $c$ in $\bE$, $\tp(c/\bK)$ is a definable type.

$T$-convex subrings are related to tame pairs of o-minimal structures. In fact if $(\bE, \cO) \models T_\convex^-$, then $\bK$ is maximal in $\{\bK' : \bK' \preceq \bE, \; \bK' \subseteq \cO\}$ if and only if $\bK\preceq_\tame \bE$ and $\cO=\CH(\bK)$ (see \cite[(2.11) and (2.12)]{dries1995t}). In particular this entails that there is a unique map $\std_\bK: \cO \to \bK$ such that for all $x \in \cO$, $\std_{\bK}(x)-x \in \co$. Moreover by \cite[Thm.~A]{dries1997t}, $\std_\bK$ induces an isomorphism between the induced structure on the imaginary sort $\res(\bE,\cO)$ and $\bK$.

We will now introduce some  further nomenclature which will be useful in the last section. In particular we will recall and extend some notions and results given in \cite{freni2024t} for models of $T_\convex$, to models of $T_\convex^-$.
	
\begin{definition}
	Let $T$ be the theory of an o-minimal field. For $(\bE, \cO)\models T_\convex$, we will call 
	\begin{itemize}
		\item \emph{residue $T$-section} a $\bK \preceq_\tame \bE$, such that $\cO=\CH(\bK)$;
		\item \emph{$\bK$-monomial group} a subgroup $\fM \subseteq (\bE^{>0}, \cdot)$ stable under the action of $\Exponents(\bE) \cap \bK$ and such that $\val_\cO |_\fM: \fM \to \val_{\cO}(\bE)$ is an isomorphism.
	\end{itemize}
\end{definition}
	
\begin{remark}
	If $T$ is power-bounded, then by Miller's growth dichotomy \cite{miller1993growth}, each exponent is $\emptyset$-definable, so $\Exponents(\bE)=\Exponents(T)$ is the field of exponents of the theory and being a $\bK$-monomial group does not depend on the $T$-residue section $\bK$. In such a case we will therefore just say \emph{monomial group}.
		
	If $T$ is exponential, then $\Exponents(\bE)=\bE$ and a monomial group is required to be closed under $\fm \mapsto \exp(k\log \fm)$ for each $k \in \bK$. 
\end{remark}
		
By an \emph{embedding} of models of $T_\convex^-$, $\iota: (\bE, \cO) \to (\bE_1, \cO_1)$ we will mean an elementary embedding $\iota: \bE \to \bE_1$ such that $\iota^{-1}(\cO_1)=\cO$. Thus if $T$ is universally axiomatized and eliminates quantifiers, this is just an embedding in the language $L_\convex$ given by the language $L$ of $T$, toghether with a unary predicate for $\cO$.

\begin{definition}\label{def:types2}
	Let $(\bU, \cO') \models T_\convex^-$ and $\bE \prec \bU$, $\cO:=\bE \cap \cO'$ (so $(\bE, \cO) \models T_\convex^-$). We say that $x\in \bU\setminus \bE$ is \emph{weakly immediate (wim)} (over $\bE$) if its cut is an intersection of valuation balls and that it is \emph{weakly immediately generated (wimg)} if $\bE \langle x \rangle\setminus \bE$ contains a weakly immediate element.
			
	We say that $x$ is \emph{residual} if $(\bE \langle x \rangle, \cO' \cap \bE \langle x \rangle)$ has a strictly larger residue field than $(\bE, \cO)$.
				  
	We say that $x$ is \emph{purely valuational} if for every $y \in \bE \langle x \rangle\setminus \bE$ there is $c \in \bE$ such that $\val_{\cO'}(x-c) \notin \val_{\cO'}(\bE)$.
			
	A \emph{principal} extension of models of $T_\convex^-$ is an embedding $\iota : (\bE, \cO) \to (\bE_1, \cO_1)$ of models of $T_\convex^-$ such that $\bE_1 =(\iota \bE) \langle x \rangle_T$.
			
	The principal extension $\iota$ will be said to be \emph{wimg}, \emph{residual} or \emph{purely valuational} if $x$ is respectively wimg, residual or purely valuational. 
\end{definition}

\begin{remark}
    Notice that if $\bE \neq \cO$ (and so $(\bE, \cO)\models T_\convex$), then the definitions of wim, wimg, residual and purely valuational given in Definition~\ref{def:types2} above coincide with the definitions given for them in \cite[Def.~3.8]{freni2024t}.
\end{remark}
		
\begin{remark}\label{rmk:disjointkinds}
	By \cite[Thm.~A]{freni2024t} if $x$ is wimg, then it is not residual, hence every $x$ is either wimg, or residual or purely valuational.
\end{remark}
		
\begin{remark}
	If $\bK \models T$, then every wimg principal extension of $(\bK, \bK) \models T_\convex^-$ is trivial.
\end{remark}
		
\begin{remark}
	It follows from the residue-valuation property of power-bounded theories (\cite[Sec.s~9 and 10]{dries2000field}, \cite[Ch.~12 and 13]{tyne2003t}) that if $T$ is power-bounded with field of exponents $\Lambda$, then
	\begin{enumerate}
		\item for every weakly immediate $x$, $(\bE\langle x \rangle, \cO' \cap \bE \langle x \rangle)$ is an immediate extension of $(\bE, \cO)$; 
		\item for every purely valuational $x$, there is $c \in \bE$ such that $\val_{\cO'}(\bE\langle x \rangle) = \val_{\cO'}(\bE) + \Lambda \val_{\cO'}(x-c)$;
        \item for every residual $x$, there are $c, d \in \bE^{\neq 0}$ such that $\res_{\cO'}(d(x-c))\notin \res_{\cO'}(\bE)$.
	\end{enumerate}
	In particular, for power-bounded $T$, principal extensions of models of $T_\convex^-$ are purely valuational if and only if they expand the value group (so in that context sometimes we will just say \emph{valuational} instead of purely valuational). We will also need the following consequence of the residue-valuation property: if  $S\subseteq \bU$ is such that $\val_{\cO'}(S)$ is $\Lambda$-linearly independent over $\val_{\cO'}(\bE)$, then $\res_{\cO'}(\bE) = \res_{\cO'}(\bE \langle S\rangle)$ and $\val_{\cO'}(\bE \langle S \rangle)=\val_{\cO'}(\bE)+ \Span_{\Lambda}(\val_{\cO'}(S))$.
\end{remark}

Let $\lambda$ be an uncountable cardinal and $(\bE, \cO)\preceq (\bU, \cO') \models T_\convex$ with $\bU$ $\lambda$-saturated. For the next section, it is useful to recall the following definitions and results from \cite{freni2024t}.
  
\begin{itemize}
    \item if $x \in \bU\setminus \bE$ is weakly immediate over $(\bE, \cO)$, the \emph{cofinality} of $x$ (over $(\bE, \cO)$) is the cofinality of the set $\bE^{<x}$ or equivalently of $-\bE^{>x}$ ((\cite[Def.~3.13 and Rmk.~3.14]{freni2024t});
    \item a weakly immediate $x$ is \emph{$\lambda$-bounded} if its cofinality is strictly smaller than $\lambda$ (\cite[Def.~3.13]{freni2024t});
    \item a \emph{$\lambda$-bounded wim-construction} is a sequence $(x_i: i< \mu)$ of elements of some $\bU$ indexed by some ordinal $\mu$, such that for all $j< \mu$, $x_j$ is $\lambda$-bounded weakly immediate over $(\bE_j:=\bE \langle x_i:i<j\rangle, \cO' \cap \bE_j)$ (\cite[Def.~3.15]{freni2024t});
    \item an extension $(\bE_*, \cO_*) \succeq (\bE, \cO)$ is said to be \emph{$\lambda$-bounded wim-constructible} if it is generated by a $\lambda$-bounded wim-construction;
    \item $(\bE,\cO)$ has a unique-up-to-non-unique-isomorphism \emph{$T$-$\lambda$-spherical completion}: that is a $\lambda$-spherically complete $\lambda$-bounded wim-constructible extension $(\bE_\lambda, \cO_\lambda)\succ (\bE,\cO)$ (\cite[Thm.~B]{freni2024t});
    \item the $T$-$\lambda$-spherical completion $(\bE_\lambda, \cO_\lambda)$ elementarily embeds over $(\bE, \cO)$ in every $\lambda$-spherically complete extension of $(\bE, \cO)$ (\cite[Thm.~3.25]{freni2024t});
    \item every $\lambda$-bounded wim-constructible extension embeds over $(\bE, \cO)$ in the $T$-$\lambda$-spherical completion $(\bE_\lambda, \cO_\lambda)$ (\cite[Thm.~3.25]{freni2024t}).
\end{itemize}
		
Also recall that if $T$ is power-bounded, the wim-constructible extensions are precisely the immediate extensions (\cite[Cor.~4.17]{freni2024t}).

\section{The Mourgues-Ressayre constructions revisited} \label{sec:MourguesRessayre}
		
\begin{definition}
	An \emph{rv-sected model} of $T_\convex^-$ is a quadruple $\cE:=(\bE, \cO, \bK, \fM)$ where $(\bE, \cO)$ is a model of $T_\convex^-$, $\bK\preceq_\tame \bE$ is a $T$-section for the residue field and $\fM$ is a $\bK$-monomial group.
	The rv-sected model $\cE$ of $T_{\convex}^-$ will be said to be \emph{above} $(\bE, \cO)\models T_\convex$ (or to be a \emph{rv-secting} of $(\bE, \cO)$); $\bK\preceq_\tame \bE$ and $\fM$ will be referred to respectively as the \emph{residue section} and \emph{monomial group} of $\cE$.
			
	An \emph{embedding of rv-sected models} $\iota: \cE_0 \to \cE_1$ is an embedding $\iota:(\bE_0, \cO_0) \to (\bE_1, \cO_1)$ of the underlying models of $T_\convex^-$ such that $\iota(\bK_0)\subseteq \bK_1$ and $\iota(\fM_0) \subseteq \fM_1$ where $\bK_i$ and $\fM_i$ are respectively the residue section and the monomial group $\cE_i$ for $i\in \{0,1\}$. In such a case we will also say that the rv-secting $\cE_0, \cE_1$ are \emph{compatible} with $\iota: (\bE_0, \cO_0) \to (\bE_1, \cO_1)$.
			
	For $T$ a power-bounded theory, an embedding $\iota: (\bE, \cO) \to (\bE_1, \cO_1)$ of models of $T_\convex^-$, will be said to be \emph{$\lambda$-bounded} if it has the following property: for every rv-sected $\cE$ above $(\bE, \cO)$ there is $\cE_1$ above $(\bE_1, \cO_1)$ such that $\iota: \cE \to \cE_1$ is an embedding of rv-sected models and $(\bE_1, \cO_1)$ is $\lambda$-bounded wim-constructible over $(\dcl_{T}(\bE \fM_1), \CH(\bK))$ where $\fM_1$ is the monomial group of $\cE_1$ and $\bK$ is the residue section of $\bE$. Note that in particular this implies that $\bK$ is also the residue section of $\cE_1$.
\end{definition}

\begin{remark}
	The definition of $\lambda$-bounded does not create ambiguity with the previous definition of $\lambda$-bounded wim-constructible, as an extension of models of $T_\convex^-$ is $\lambda$-bounded wim-constructible if and only if it is both $\lambda$-bounded and wim-constructible as shown by the following Lemma.
\end{remark}
		
\begin{lemma}\label{lem:decomp}
	Let $T$ be power-bounded with field of exponents $\Lambda$. The following are equivalent for an extension $\iota: (\bE, \cO) \to (\bE', \cO')$ of models of $T_\convex^-$:
	\begin{enumerate}
		\item there is sequence $(\bE_i, \cO_i)$ such that $(\bE_i, \cO_i) = \bigcup_{j<i} (\bE_{j+1}, \cO_{j+1})$ and every $(\bE_{i+1}, \cO_{i+1})$ is a principal extension of $(\bE_i, \cO_i)$ which is either $\lambda$-bounded weakly immediate or valuational;
		\item $\iota$ is $\lambda$-bounded.
	\end{enumerate}
	\begin{proof}
		$(1) \Rightarrow (2)$, observe that a composition of $\lambda$-bounded extensions is $\lambda$-bounded, therefore it suffices to show that if $(\bE, \cO)\subseteq (\bE\langle x \rangle, \cO_1)$ is a principal extension which is either $\lambda$-bounded weakly immediate or valuational, then the extension is $\lambda$-bounded. If it is $\lambda$-bounded weakly immediate there is nothing to prove. In the other case there is $c \in \bE$ such that $\val_{\cO_1}(x-c)\notin \val_{\cO_1}\bE$. Notice that if $\fM$ is a monomial group for $(\bE, \cO)$, and $\val_\cO(y) \in \val_{\cO_1}(\bE_1) \setminus \val_{\cO}(\bE)$, then by the rv-property $\fM_1:=\fM y^{\Lambda}$ is a monomial group for $(\bE_1, \cO_1)$ and one easily sees that $\bE\langle x \rangle = \dcl_{T}(\fM_1\bE)$.
				
		$(2) \Rightarrow (1)$ let $\cE$ and $\cE'$ be rv-sectings of $\bE$ and $\cE'$ with residue field $\bK$ and monomial groups $\fM$ and $\fM'$. Suppose that $\cE$ is $\lambda$-bounded wim-constructible above $(\dcl_T(\bE \fM'), \CH(\bK))$. Then a sequence of principal extension as in (1) can be obtained by adjoining first a $\Lambda$-basis of $\fM'$ (thus getting only valuational extensions) and then considering a $\lambda$-bounded wim-construction of $\bE'$ above $(\bE, \CH(\bK))$.
	\end{proof}
\end{lemma}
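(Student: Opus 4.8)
The plan is to unwind the definition of a $\lambda$-bounded embedding of models of $T_\convex^-$ and play it against the residue-valuation property of power-bounded theories. For $(1)\Rightarrow(2)$ I would first record the auxiliary fact that a (transfinite) composition of $\lambda$-bounded embeddings, taking unions of the rv-sectings at limit stages, is again $\lambda$-bounded; granting this, it suffices to check that a single principal step of each of the two permitted kinds is $\lambda$-bounded. If the step $(\bE,\cO)\subseteq(\bE\langle x\rangle,\cO_1)$ is $\lambda$-bounded weakly immediate, then any rv-secting $\cE=(\bE,\cO,\bK,\fM)$ already works upstairs: a weakly immediate extension leaves both the residue field and the value group untouched, so $\bK$ stays a residue section, $\fM$ stays a monomial group, $\dcl_T(\bE\fM)=\bE$, and the single step $x$ is itself the required $\lambda$-bounded wim-construction over $(\bE,\CH(\bK))$. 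If the step is valuational, choose $c\in\bE$ with $\val_{\cO_1}(x-c)\notin\val_{\cO_1}(\bE)$ and set $y:=x-c$; the residue-valuation property gives $\val_{\cO_1}(\bE\langle x\rangle)=\val_{\cO_1}(\bE)+\Lambda\,\val_{\cO_1}(y)$ with unchanged residue field, so $\fM_1:=\fM\,y^{\Lambda}$ is a monomial group for $(\bE\langle x\rangle,\cO_1)$ --- injectivity of $\val$ on $\fM_1$ uses that $\val_{\cO_1}(\bE)$ is a $\Lambda$-subspace --- and, $\bK$ remaining tame with $\CH(\bK)=\cO_1$, one sees $\bE\langle x\rangle=\dcl_T(\bE\cup\{y\})=\dcl_T(\bE\fM_1)$, so the wim-construction required over $(\dcl_T(\bE\fM_1),\CH(\bK))$ is empty.

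For $(2)\Rightarrow(1)$, fix an arbitrary rv-secting $\cE=(\bE,\cO,\bK,\fM)$ of $(\bE,\cO)$. By $\lambda$-boundedness of $\iota$ there is a compatible rv-secting $\cE'=(\bE',\cO',\bK,\fM')$ with $\fM'\supseteq\fM$ such that $(\bE',\cO')$ is $\lambda$-bounded wim-constructible over $(\dcl_T(\bE\fM'),\CH(\bK))$; in particular $\cE$ and $\cE'$ share the residue section $\bK$, so no residual steps will intervene. Since $\fM'$ is a monomial group, $\val_{\cO'}(\bE')/\val_{\cO'}(\bE)$ is a $\Lambda$-vector space spanned by the image of $\fM'$; pick $B\subseteq\fM'$ whose values form a basis of it and adjoin the elements of $B$ one at a time, taking unions at limits. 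By the residue-valuation property each such step is a valuational principal extension, and once $B$ is exhausted the model reached is exactly $\dcl_T(\bE\fM')$. Concatenating with the given $\lambda$-bounded wim-construction of $\bE'$ over $(\dcl_T(\bE\fM'),\CH(\bK))$ --- whose steps are by definition $\lambda$-bounded weakly immediate principal extensions --- produces a sequence of the shape demanded in (1).

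I expect the genuinely delicate point to be the auxiliary closure-under-composition fact invoked at the start of $(1)\Rightarrow(2)$, above all its behaviour at limit stages. When one chains the locally-supplied valuational and weakly-immediate steps coming from successive $\lambda$-bounded factors, the base of each local wim-construction is the $\dcl_T$-hull of that factor's own monomial group rather than of the terminal one, so the steps must be re-ordered (valuational steps pulled to the front) and one has to argue --- using that for power-bounded $T$ wim-constructible extensions are immediate --- that they still witness $\lambda$-boundedness over $(\dcl_T(\bE\fM_\infty),\CH(\bK))$, where $\fM_\infty$ is the union of the monomial groups. The single-step analysis in $(1)\Rightarrow(2)$ and the whole of $(2)\Rightarrow(1)$ are then routine applications of the residue-valuation property (\cite[Sec.s~9 and 10]{van2000field}, \cite[Ch.~12 and 13]{tyne2003t}) together with the characterisation of $\lambda$-bounded wim-constructible extensions from \cite{freni2024t}.
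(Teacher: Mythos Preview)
Your proposal is correct and follows essentially the same route as the paper: reduce $(1)\Rightarrow(2)$ to single principal steps via closure of $\lambda$-bounded embeddings under composition, handle the two kinds of step exactly as the paper does, and for $(2)\Rightarrow(1)$ adjoin a $\Lambda$-basis of the larger monomial group first and then append the given wim-construction. Your third paragraph is a fair commentary: the paper simply asserts ``a composition of $\lambda$-bounded extensions is $\lambda$-bounded'' without argument, and your observation that the re-basing of the wim-constructions over the terminal $\dcl_T(\bE\fM_\infty)$ needs the power-bounded identification of wim-constructible with immediate is exactly the point one would need to spell out if pressed.
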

		
\begin{lemma}
	If $T'$ is the theory of an o-minimal field, $T$ is a power-bounded reduct and $(\bE, \cO) \prec (\bE_*, \cO_*)\models T'_\convex$ is $\lambda$-wim constructible for some $\lambda \ge |T'|^+$, then the underlying extension of reducts to $T_\convex$ is $\lambda$-bounded.
	\begin{proof}
		Since $\lambda$-bounded extensions of models of $T$ are closed under composition, it suffices to show that every $(\bE_*, \cO_*):=(\bE \langle x \rangle_{T'}, \cO_x) \succ (\bE, \cO)\models T'_\convex$ with $x$ weakly immediate over $(\bE, \cO)$, is $\lambda$-bounded qua extension of models of $T_\convex^-$.
				
		By Lemma~\ref{lem:decomp}, it suffices to show that $(\bE_*, \cO_*) \succ (\bE, \cO)\models T_\convex$ is a composition of principal extensions that are either $\lambda$-bounded immediate or valuational when regarded as extensions of models $T_\convex^-$.
				
		Since by Remark~\ref{rmk:disjointkinds} $\res(\bE:_*,\cO_*)=\res(\bE,\cO)$, and $T$ is power-bounded, given a $\dcl_T$-basis $(x_i: i< \mu)$ of $\bE_*$ over $\bE$, for every $i$, $x_i$ is either valuational or weakly immediate over $\bE_i:=\bE\langle x_j: j< i\rangle$. 
		So it suffices to show that whenever $\tp(x_i/\bE_i)$ is weakly immediate, it must have cofinality $<\lambda$.
				
		Note that $\bE^{<x_i}$ has cofinality $< \lambda$ because by hypothesis $\bE^{<x}$ has cofinality $<\lambda$, and that we must have $\mu \le |T'|$, therefore $\bE_i^{< x_i}\setminus \bE^{<x_i}$ if non-empty has cofinality at most $|T|+|i|\le |T'|$. It follows that $\bE_i^{<x_i}$ has cofinality $< \lambda$.
	\end{proof}
\end{lemma}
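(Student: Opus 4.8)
The plan is to reduce to a single weakly immediate $T'$-step, show that this step is \emph{immediate} and splits into weakly immediate steps once read through the power-bounded reduct $T$, and then verify the cofinality condition demanded by the decomposition Lemma~\ref{lem:decomp}.

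\emph{Reduction and immediacy.} Since $\lambda$-bounded extensions of models of $T_\convex^-$ compose and a $\lambda$-bounded wim-construction presents $(\bE_*,\cO_*)$ as a transfinite composition of principal $\lambda$-bounded weakly immediate $T'_\convex$-extensions, I would first reduce to the case $(\bE_*,\cO_*)=(\bE\langle x\rangle_{T'},\cO_*)$ with $x$ $\lambda$-bounded weakly immediate over $(\bE,\cO)\models T'_\convex$. A weakly immediate element is weakly immediately generated, hence not residual by Remark~\ref{rmk:disjointkinds}, so $\res(\bE_*,\cO_*)=\res(\bE,\cO)$. For the value group I would argue that every $y\in\bE_*\setminus\bE$ equals $\psi(x)$ for an $\bE$-definable (in $T'$) function $\psi$ that, by o-minimality of $T'$, is near $x$ either locally constant — then $y\in\bE$, which is excluded — or a monotone homeomorphism of an $\bE$-definable interval $I\ni x$; since definable homeomorphisms respect $T'$-convex valuations, $\psi$ carries the cut of $x$ over $\bE$ to that of $y$, which is therefore again an intersection of valuation balls, so $\val_{\cO_*}(y)\in\val_\cO(\bE)$. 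Thus $\val_{\cO_*}(\bE_*)=\val_\cO(\bE)$ and the extension $(\bE,\cO)\to(\bE_*,\cO_*)$ is immediate in $T$.

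\emph{Splitting into weakly immediate steps.} Next I would fix a $\dcl_T$-basis $(x_i:i<\mu)$ of $\bE_*$ over $\bE$, set $\bE_i:=\bE\langle x_j:j<i\rangle_T$ and $\cO_i:=\cO_*\cap\bE_i$, and look at the principal $T_\convex^-$-extensions $(\bE_i,\cO_i)\to(\bE_{i+1},\cO_{i+1})$. Since $T$ is power-bounded, each is wimg, residual, or purely valuational (Remark~\ref{rmk:disjointkinds}); it cannot be residual (residue fields along the chain increase and all equal $\res(\bE,\cO)$) nor purely valuational (value groups along the chain increase and all equal $\val_\cO(\bE)$, by the immediacy above), so it is wimg, hence immediate, hence — after replacing $x_i$ by a weakly immediate generator — a weakly immediate step. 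By Lemma~\ref{lem:decomp} it then remains to check that each step is $\lambda$-bounded, i.e.\ $\mathrm{cf}(\bE_i^{<x_i})<\lambda$. The contribution of $\bE$ is easy: writing $x_i=\psi(x)$ with $\psi$ a near-$x$ monotone $\bE$-definable-in-$T'$ function, the cut of $x_i$ over $\bE$ has the same cofinality as one side of the cut of $x$, which is $<\lambda$ as $x$ is $\lambda$-bounded.

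The hard part will be the contribution of $\bE_i\setminus\bE$ to $\mathrm{cf}(\bE_i^{<x_i})$, since a crude count is useless ($|\bE_i|$ may exceed $\lambda$). I would write each element of $\bE_i$ as a $T$-term $t\big(\bar b,(x_j)_{j\in\bar\jmath}\big)$ with $\bar b$ from $\bE$ and $\bar\jmath\subseteq i$ finite, group the elements below $x_i$ by the pair $(t,\bar\jmath)$, and use o-minimality of $T$ (together with immediacy and weak immediateness of $x_i$ over $\bE_i$) to show that for each pair the supremum below $x_i$ of the corresponding $\bE\langle(x_j)_{j\in\bar\jmath}\rangle_T$-definable family is attained — the value $x_i$ itself being excluded; the attained values then form a cofinal subset of cardinality at most the number of pairs, $|T|+|i|$. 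What remains is the bound $|i|<\lambda$, i.e.\ a bound on the $T$-dimension $\mu$ of the one-$T'$-dimensional extension $\bE_*$ over $\bE$; establishing this cleanly (from o-minimality of $T'$, and — in the form actually needed — from $\lambda$ being large enough relative to $\bE$) is, together with the cofinality count above, the one genuinely delicate point, everything else following from Lemma~\ref{lem:decomp}, Remark~\ref{rmk:disjointkinds}, and standard facts on $T$-convexity.
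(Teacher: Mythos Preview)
Your reduction and your handling of the cofinality coming from $\bE$ match the paper's, but the immediacy step is a genuine gap. You claim that a single weakly-immediate $T'$-step $(\bE,\cO)\to(\bE\langle x\rangle_{T'},\cO_*)$ is \emph{immediate}, arguing that $T'$-definable monotone homeomorphisms ``respect $T'$-convex valuations'' and therefore carry the cut of $x$ (an intersection of valuation balls) to another such. But $T'$-convexity only guarantees that definable continuous maps send $\cO$ into $\cO$; it does not imply they send arbitrary valuation balls to valuation balls. Concretely, for exponential $T'$, if $x$ is weakly immediate with $\sup_{c\in\bE}\val_\cO(x-c)\le 0$ (so the approximating balls are never contained in a single $\co$-coset), then $\exp(x)$ can have $\val_{\cO_*}(\exp(x))\notin\val_\cO(\bE)$, and in that case $\exp(x)$ is \emph{not} weakly immediate over $\bE$ (the element $0$ is a nearest point). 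More globally, the $T'$-$\lambda$-spherical completion is $T'$-wim-constructible yet, by Theorem~\ref{thm:MRanalogue}, its $T$-reduct is $\bK\Lhp\fN\Rhp_\lambda^\cT$ with $\fN$ generally strictly larger than the monomial group of $\bE$; so $T'$-wim-constructions do enlarge the value group, and your argument would exclude exactly the phenomenon the lemma must cover.

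The paper avoids this entirely: it uses only residue-field preservation (Remark~\ref{rmk:disjointkinds}) and allows each $x_i$ in the $\dcl_T$-basis to be \emph{either valuational or weakly immediate} over $\bE_i$, the valuational steps being absorbed for free by Lemma~\ref{lem:decomp}. For the bound you flagged as delicate, the paper asserts $\mu\le|T'|$ (whence $|i|<\mu\le|T'|<\lambda$) and bounds $\mathrm{cf}\big(\bE_i^{<x_i}\setminus\bE^{<x_i}\big)\le|T|+|i|$; your idea of grouping by term--index pairs $(t,\bar\jmath)$ is the right way to unpack the latter, but your invocation of immediacy there inherits the same gap.
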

		
\begin{definition}
	Suppose $\bK_L\models T$ is power-bounded with field of exponents $\Lambda$ and that $(\bK_L, \cT)$ is serial. For every multiplicatively written ordered $\Lambda$-vector space $\fM$, we will denote by $[\bK\Lhp \fM \Rhp_\lambda^\cT]$ the rv-sected model of $T_\convex$ $\big(\bK\Lhp \fM\Rhp_\lambda^\cT, \CH(\bK), \bK , \fM \big)$.
			
	Given a rv-sected model $\cE$ of $T_\convex$ with residue section ($T$-isomorphic to) $\bK$, a \emph{t.c.\ embedding} is an embedding of rv-sected models of $T_\convex^-$, $\iota : \cE \to [\bK \Lhp \fN \Rhp_\lambda^\cT]$ for some $\lambda$ such that the image of $\iota$ is a truncation-closed subfield of $[\bK \Lhp \fN \Rhp_\lambda^\cT]$.
			
	Let $(\bE, \cO) \preceq (\bE_1, \cO_1)$ and $\cE$ be an rv-sected model above $(\bE, \cO)$. A t.c.\ embedding $\iota: \cE \to [\bK\Lhp \fM \Rhp_\lambda]_{\cT}$ will be said to be \emph{$\val$-maximal within $(\bE_1, \cO_1)$} if every proper extension of $\iota$ along some $T_\convex$-extension $j: (\bE, \cO) \preceq (\bE_2, \cO_2)$ factoring the inclusion $(\bE, \cO) \preceq (\bE_1, \cO_1)$ is such that $\val_{\cO_2}(\bE) \neq \val_{\cO_2}(\bE_2)$.
\end{definition}
		
\begin{context}\label{cont:K_LT_serial}
    Throughout the rest of the section $(\bK_L, \cT)$ will be a serial power-bounded structure with field of exponents $\Lambda$, $T$ will be the theory of $(\bK_L, \cT)$ and $T'$ will be the theory of an o-minimal expansion $\bK_{L'}$ of $\bK_L$.
\end{context}
 
\begin{lemma}\label{lem:extension}
	Assume $\eta : \cE_0 \to \cE_1$ is a $\lambda$-bounded extension of rv-sected models of $T_\convex^-$ with residue section $\bK$ and value groups $\fM$ and $\fM_1$ respectively.
			
	If $\iota : \cE \to [\bK \Lhp \fN \Rhp_\lambda^\cT]$ is a $\lambda$-bounded truncation-closed embedding, and $j:\fM_1 \to \fN$ is an (ordered $\Lambda$-linear) extension of $\iota|_{\fM}$, then $\iota$ extends along $\eta$ to a $\lambda$-bounded truncation-closed embedding $\iota_1 : \cE_1 \to [\bK \Lhp \fN \Rhp_\lambda^\cT]$ such that $\iota_1|_{\fM_1}=j$.
	\begin{proof}
        Let $\cE:=(\bE, \cO, \bK,\fM)$ and $\cE_1:=(\bE_1, \cO_1, \bK, \fM_1)$.
		It suffices to prove the statement in the case $\bE_1 = \bE \langle x \rangle$ is a principal extension. By Lemma~\ref{lem:decomp} it is either valuational or $\lambda$-bounded immediate.
				
		If $x$ is valuational, then for some $c \in \bE$, $\val(x-c) \notin \val \bE$. Pick $\fn \in \fN$ with $\tp(\fn/\bE)=\tp(x-c/\bE)$. By Proposition~\ref{prop:tc-ppty}, $(\iota \bE)\langle \fn \rangle_T$ is truncation-closed and we are done.
				
		If $x$ is $\lambda$-bounded immediate and $(x_i)_{i < \mu}\in \bE^\mu$ is a p.c.-sequence for $x$, then we can set $\iota(x)$ to be the only element of $(\bK \Lhp \fN \Rhp_\lambda, \CH(\bK))$ which is a pseudolimit for $(x_i)_{i<\mu}$ and is such that $\iota(x)|\fm \neq \iota(x)\Rightarrow \iota(x)|\fm \in \bE$. Again by Proposition~\ref{prop:tc-ppty} $(\iota \bE)\langle \fn \rangle_T$ is truncation-closed and we are done.
	\end{proof}
\end{lemma}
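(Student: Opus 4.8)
\emph{Proof proposal.} The plan is to reduce, via Lemma~\ref{lem:decomp}, to a single principal step and then push everything through Proposition~\ref{prop:tc-ppty}. Since $\eta:\cE_0\to\cE_1$ is $\lambda$-bounded, Lemma~\ref{lem:decomp} writes $\bE_1$ over $\bE$ as a transfinite composition $(\bE_i)_{i\le\mu}$ of principal extensions, each of which is valuational or $\lambda$-bounded weakly immediate, with $\fM\subseteq\fM_1$ built up by adjoining, at the valuational steps, one new $\Lambda$-independent value at a time. I would construct $\iota_1$ by recursion on $i$: at a successor step I handle one principal extension as below, always sending a newly adjoined monomial to its prescribed image under $j$; at limit steps I take unions. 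The limit steps are free, because a union of an elementary chain of truncation-closed subfields of $\bK\Lhp \fN \Rhp_\lambda^\cT$ is again truncation-closed and elementary, and a directed union of $\lambda$-bounded embeddings of rv-sected models is $\lambda$-bounded.

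So suppose $\bE_1=\bE\langle x\rangle_T$. If the step is valuational, after re-decomposing we may assume $x$ is itself the new monomial $\fm_1$, with $\val_{\cO_1}(\fm_1)\notin\val_{\cO_1}(\bE)$ and $\fM_1=\fM\,\fm_1^\Lambda$; by the residue-valuation property $\tp(\fm_1/\bE)$ is then determined by the cut that $\val_{\cO_1}(\fm_1)$ makes in $\val_{\cO_1}(\bE)$. As $j$ is an ordered $\Lambda$-linear extension of $\iota|_\fM$, the monomial $j(\fm_1)\in\fN$ realizes the corresponding cut over $\iota(\bE)$, so $\iota_1(x):=j(\fm_1)$ yields an elementary embedding of models of $T_\convex^-$; a monomial has only the trivial proper truncation $0\in\iota(\bE)$, so Proposition~\ref{prop:tc-ppty} applies and $(\iota\bE)\langle j(\fm_1)\rangle_T$ is truncation-closed. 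If the step is $\lambda$-bounded weakly immediate, then it is immediate (as $T$ is power-bounded), so $\fM_1=\fM$ and $j$ imposes no new constraint; fixing a pseudo-Cauchy sequence $(x_i)_{i<\nu}$ for $x$ with $\nu<\lambda$ its cofinality, $\lambda$-spherical completeness of $\bK\Lhp \fN \Rhp_\lambda^\cT$ provides a pseudolimit of $(\iota x_i)_{i<\nu}$, and I would take the \emph{minimal} such pseudolimit $\iota_1(x)$, whose support is contained in the union of the supports of the $\iota x_i$ with no additional tail. Each proper truncation $\iota_1(x)|\fm\ne\iota_1(x)$ is then a truncation of some $\iota x_i$, hence lies in $\iota(\bE)$, so Proposition~\ref{prop:tc-ppty} again applies; and $\iota_1$ is elementary because an immediate extension of $\bE$ is pinned down by the pseudo-Cauchy sequence.

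The step I expect to require the most care is the weakly immediate one: one must check that the minimal pseudolimit of $(\iota x_i)_{i<\nu}$ genuinely lies in the \emph{$\lambda$-bounded} Hahn field $\bK\Lhp \fN \Rhp_\lambda^\cT$ rather than merely in $\bK\Lhp \fN \Rhp^\cT$ — this is exactly where the hypotheses that $\iota$ is $\lambda$-bounded, that $x$ is $\lambda$-bounded (cofinality $<\lambda$), and that $\lambda$ is large enough (regular, so that the union of $<\lambda$ supports each of size $<\lambda$ is again an admissible support, and $\bK\Lhp \fN \Rhp_\lambda^\cT$ is $\lambda$-spherically complete) must be combined — and that all of its proper truncations really do land in $\iota(\bE)$ itself. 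Granting this, together with the bookkeeping from Lemma~\ref{lem:decomp} at the limit stages, the construction of $\iota_1$ goes through.
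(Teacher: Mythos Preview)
Your proposal is correct and follows essentially the same approach as the paper: reduce via Lemma~\ref{lem:decomp} to a single principal step, then in each of the valuational and $\lambda$-bounded immediate cases choose the image of the generator so that Proposition~\ref{prop:tc-ppty} applies. Your write-up is in fact more explicit than the paper's in several places---you spell out the transfinite recursion and limit stages, you insist on using $j(\fm_1)$ in the valuational step (the paper just says ``pick $\fn\in\fN$ with the right type''), and you flag the point about the minimal pseudolimit actually landing in $\bK\Lhp\fN\Rhp_\lambda$, which the paper passes over---but none of this constitutes a different argument.
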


We are now ready to prove the first part of Theorem~\ref{introthm:C} of the introduction.

\begin{theorem}\label{thm:MRanalogue}
    As in Context~\ref{cont:K_LT_serial}, let $\bK\models T'$, $\bK_L$ be a power bounded reduct of $\bK$ such that $(\bK_L, \cT)$ is serial, and $T=\Th(\bK_L)$. Let $\bK \preceq_\tame \bE$, and $(\bE_\lambda, \cO_\lambda)$ be the $T'$-$\lambda$-spherical completion of $(\bE, \CH(\bK))$ for $\lambda$ large enough.
            
    Then there is an $L(\bK)$-isomorphism $\eta: \bE_\lambda \to \bK\Lhp \fN \Rhp^\cT_\lambda$ such that $\eta(\bE)$ is truncation-closed.
	\begin{proof}
		Let $\varepsilon_\lambda: (\bE, \cO) \to (\bE_\lambda, \cO_\lambda)$ be an elementary embedding of $(\bE, \cO)$ into its $\lambda$-bounded spherical completion. We can assume $\varepsilon_\lambda$ is an inclusion and $\bK\preceq_\tame \bE_\lambda$.
				
		Let $\cE:=(\bE, \cO, \bK, \fM)$ and $\cE_\lambda:=(\bE,\cO, \bK, \fN)$ be rv-sections above the $T_\convex^-$ reducts of $(\bE,\cO)$ and $(\bE_\lambda, \cO_\lambda)$, compatible with $\varepsilon_\lambda$.
				
		By Lemma~\ref{lem:extension}, if $\lambda$ is large enough there is a $\lambda$-bounded truncation-closed embedding $\iota':\cE \to [\bK\Lhp \fM \Rhp_\lambda]_{\cT}$. 
				 
		Let $\iota''$ be the composition of $\iota'$ with the natural inclusion
		\[[\bK\Lhp \fM \Rhp_\lambda^\cT] \hookrightarrow [\bK\Lhp \fN \Rhp_\lambda^\cT].\]
		Observe that $\iota''$ also has a truncation-closed image.
		Again by Lemma~\ref{lem:extension}, $\iota''$ can be extended along $\varepsilon_\lambda$ to a $\lambda$-bounded truncation-closed embedding $\eta : (\bE_\lambda, \cO_\lambda) \to [\bK\Lhp \fN \Rhp_\lambda^\cT]$. Now observe that since $(\bE_\lambda, \cO_\lambda)$ is $\lambda$-spherically complete, $\eta$ must be an isomorphism. The image of $\eta\circ \varepsilon_\lambda$ is the image of $\iota''$, so it is truncation-closed.
	\end{proof}
\end{theorem}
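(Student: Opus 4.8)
The plan is to produce $\eta$ by running the Mourgues--Ressayre machinery as two applications of Lemma~\ref{lem:extension} — first climbing from the trivial truncation-closed embedding to a t.c.\ embedding of $\cE$, then climbing from $\cE$ to the spherical completion $\cE_\lambda$ — and finally to upgrade the resulting elementary embedding to an isomorphism using that its source is $\lambda$-spherically complete. To set up, I would fix an elementary embedding $\varepsilon_\lambda\colon(\bE,\cO)\hookrightarrow(\bE_\lambda,\cO_\lambda)$ of $(\bE,\CH(\bK))$ into its $T'$-$\lambda$-spherical completion, taken to be an inclusion with $\cO=\CH(\bK)$, $\cO_\lambda=\CH(\bK)$ and $\bK\preceq_\tame\bE_\lambda$, and then choose a monomial group $\fM$ for $(\bE,\cO)$ and a monomial group $\fN\supseteq\fM$ for $(\bE_\lambda,\cO_\lambda)$, yielding compatible rv-sected models $\cE=(\bE,\cO,\bK,\fM)$ and $\cE_\lambda=(\bE_\lambda,\cO_\lambda,\bK,\fN)$.

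For the first application I would observe that, since $\dcl_T(\bK\fM)$ already has value group $\val(\bE,\cO)$ and residue field $\bK$, the inclusion $(\dcl_T(\bK\fM),\CH(\bK))\subseteq(\bE,\cO)$ is an immediate extension, hence wim-constructible by \cite[Cor.~4.17]{freni2024t} as $T$ is power-bounded, and — once $\lambda$ exceeds every cofinality met in such a wim-construction, e.g.\ for $\lambda$ larger than $|\bE|$ — it is $\lambda$-bounded; likewise $\bK\hookrightarrow\bK\Lhp\fN\Rhp_\lambda$ is a $\lambda$-bounded embedding compatible with the trivial rv-secting of $\bK$ and with $[\bK\Lhp\fN\Rhp_\lambda^\cT]$, with trivially truncation-closed image. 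Feeding these into Lemma~\ref{lem:extension} (with $\cE_0$ the trivial rv-sected model, the $\lambda$-bounded extension $\cE_0\to\cE$, the trivial embedding $\cE_0\to[\bK\Lhp\fN\Rhp_\lambda^\cT]$ and $j$ the inclusion $\fM\hookrightarrow\fN$) gives a $\lambda$-bounded truncation-closed embedding $\iota'\colon\cE\to[\bK\Lhp\fN\Rhp_\lambda^\cT]$ restricting to the inclusion on $\fM$.

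For the second application I would apply Lemma~\ref{lem:extension} to $\varepsilon_\lambda\colon\cE\to\cE_\lambda$, to $\iota'$, and to $j=\mathrm{id}_\fN$, after checking that $\varepsilon_\lambda$ is a $\lambda$-bounded extension of rv-sected models: the underlying $(\bE,\cO)\subseteq(\bE_\lambda,\cO_\lambda)$ is $\lambda$-wim-constructible as an extension of $T'_\convex$-models (it is a $T'$-$\lambda$-spherical completion), so for $\lambda\ge|T'|^+$ the reduct lemma preceding this theorem makes its reduct to $T_\convex$ a $\lambda$-bounded extension, and Lemma~\ref{lem:decomp} exhibits that reduct, compatibly with $\cE$ and $\cE_\lambda$, as a tower of valuational and $\lambda$-bounded immediate principal extensions. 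Lemma~\ref{lem:extension} then extends $\iota'$ along $\varepsilon_\lambda$ to a $\lambda$-bounded truncation-closed embedding $\eta\colon\cE_\lambda\to[\bK\Lhp\fN\Rhp_\lambda^\cT]$; as $\eta\circ\varepsilon_\lambda=\iota'$ and $\varepsilon_\lambda$ is an inclusion, $\eta(\bE)=\iota'(\bE)$ is truncation-closed. To see $\eta$ is onto, I would note its image $\eta(\bE_\lambda)$ is an $L_\convex$-substructure of $(\bK\Lhp\fN\Rhp_\lambda,\CH(\bK))$ containing $\bK$ and $\fN$, hence with value group $\fN$ and residue field $\bK$, so that $\bK\Lhp\fN\Rhp_\lambda$ is an immediate extension of it; were it proper, \cite[Cor.~4.17]{freni2024t} would furnish an element weakly immediate over $\eta(\bE_\lambda)$, necessarily $\lambda$-bounded because supports in $\bK\Lhp\fN\Rhp_\lambda$ have size $<\lambda$, contradicting $\lambda$-spherical completeness of $\eta(\bE_\lambda)\cong(\bE_\lambda,\cO_\lambda)$. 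Thus $\eta$ is a surjective elementary $L$-embedding fixing $\bK$, the desired $L(\bK)$-isomorphism.

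I expect essentially all the genuine difficulty to lie in the results being invoked — chiefly Theorem~\ref{thm:mainlemma}, which powers Proposition~\ref{prop:tc-ppty} and hence Lemma~\ref{lem:extension}, and the wim-construction theory of \cite{freni2024t} — the present argument being an assembly of them. Within this assembly the one delicate point is the bookkeeping behind ``$\lambda$ large enough'': a single regular $\lambda$ must simultaneously satisfy $\lambda\ge|T'|^+$, exceed the cofinalities arising in the base wim-construction of $\bE$ over $\dcl_T(\bK\fM)$, and be large enough that $\bK\Lhp\fN\Rhp_\lambda$ is itself $\lambda$-spherically complete, so that the pseudolimits chosen inside it when applying Lemma~\ref{lem:extension} exist with all proper truncations landing in the subfield built so far; a secondary point is arranging the rv-sectings $\cE,\cE_\lambda$ and the maps $j$ coherently so the two invocations of Lemma~\ref{lem:extension} chain together.
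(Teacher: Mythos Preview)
Your proposal is correct and follows essentially the same approach as the paper's proof: two invocations of Lemma~\ref{lem:extension} (from the trivial rv-sected model up to $\cE$, then from $\cE$ up to $\cE_\lambda$), followed by the observation that $\lambda$-spherical completeness forces surjectivity. The only cosmetic difference is that the paper first embeds $\cE$ into $[\bK\Lhp\fM\Rhp_\lambda^\cT]$ and then composes with the inclusion into $[\bK\Lhp\fN\Rhp_\lambda^\cT]$, whereas you go directly into $[\bK\Lhp\fN\Rhp_\lambda^\cT]$ via $j\colon\fM\hookrightarrow\fN$; and you spell out both the base step and the surjectivity argument (using that $\eta(\bE_\lambda)$ is truncation-closed and contains $\fN$, so any immediate element over it is approximated by its own truncations, hence is $\lambda$-bounded), which the paper leaves implicit.
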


The remainder of this section is dedicated to the proof of the remainder of Theorem~\ref{introthm:C}. For this we need to introduce some new definitions concerning compatibility of t.c.\ embeddings with an exponential function.

\begin{definition}
	Let $(\bE, \cO)\models T_\convex^-$. We say that an ordered exponential $\exp: \bE \to \bE^{>0}$ is \emph{weakly $\cO$-compatible} if:
    \begin{enumerate}
        \item $\exp(\cO) \subseteq \cO$;
        \item for all $t\succ 1$, $1 \prec \log|t| \prec t$, where $\log:=\exp^{-1}$ is the compositional inverse of $\exp$;
        \item there is some interval $I$ containing $\co$ such that $\exp|_I$ is $T$-definable.
    \end{enumerate}
			
	In the presence of a weakly $\cO$-compatible exponential, we will say that a t.c.\ embedding $\iota: \cE \to [\bK\Lhp \fN \Rhp_\lambda^{\cT}]$, with $\cE=(\bE, \cO, \bK, \fM)$ is
	\begin{itemize}
		\item \emph{dyadic} (after Ressayre \cite{Ressayre1993Integer}) if $\iota : \cE \to [\bK \Lhp \fN \Rhp_\lambda^\cT]$ is such that $\iota \log \fM \subseteq \bK\Lhp \fN^{>1} \Rhp_{\lambda}$;
		\item \emph{T4} (after Schmeling \cite{schmeling2001corps}) if for every sequence of monomials $(\fm_n)_{n \in \bN}$ with $\iota \fm_{n+1} \in \Supp(\iota \log \fm_n)$ for all $n \in \bN$, there is $N\in \bN$ such that for all $n\ge N$, $\iota \log \fm_n = c_n \pm \iota \fm_{n+1}$ for some $c_n$ with $\Supp c_n>\iota \fm_{n+1}$.
	\end{itemize}
    We will call \emph{R.S.\ embedding} (for Ressayre and Schmeling) a t.c.\ embedding that is both dyadic and T4.
\end{definition}

\begin{remark}\label{rmk:D&T4reminder}
    Recall the conditions \ref{Intro:D} and \ref{Intro:T4} of the introduction.
    Notice that a t.c.\ embedding $\iota$ whose image has the form $\bK \Lhp \fM \Rhp_B \subseteq \bK \Lhp \fM \Rhp$ is dyadic if and only if its image with the induced exponential satisfies \ref{Intro:D}. Similarly $\iota$ is T4 if and only if its image with the induced exponential satisfies \ref{Intro:T4}.
\end{remark}

\begin{lemma}\label{lem:exp_extension}
	Let $j: (\bE, \cO) \preceq (\bE_*, \cO_*)$ be a $\lambda$-bounded extension of models of $T_\convex^-$ expanded with a weakly $\cO$-compatible (resp.\ $\cO_*$-compatible) exponential.
	Let $\fN$ be a multiplicative copy of the value group of $(\bE_*, \cO_*)$. Assume that $\cE$ is an rv-secting of $(\bE, \cO)$ and $\iota : \cE \to [\bK \Lhp \fN \Rhp_\lambda^\cT]$ is a R.S.\ embedding maximal within $(\bE_*,\cO_*)$. Then there is an rv-secting $\cE_*$ of $(\bE_*, \cO_*)$ extending $\cE$, such that $\iota$ extends to a R.S.\ embedding $\iota_*: \cE_* \to  [\bK \Lhp \fN \Rhp_\lambda^\cT]$.
	\begin{proof}
		An adaptation of the argument in Ressayre's \cite{Ressayre1993Integer} (see also \cite{DAquino2012Real}). Note that it suffices to show the statement with $\fN$ a sufficiently saturated multiplicatively written ordered $\Lambda$-vector space.
				
		Let $\fM$ be the monomial group of the fixed rv-secting $\cE$ on $(\bE, \cO)$. By hypothesis $\iota (\fM) \subseteq \fN$ and $\log \iota (\fM) \subseteq \bK\Lhp \fN^{>1} \Rhp$. 
		Pick an $x \in \bE_1 \setminus \bE$. It suffices to show that we can extend $\iota$ to a R.S.\ embedding of some extension $\cE_{\omega}$ of $\cE$ above some intermediate $(\bE_{\omega}, \cO_{\omega})$, $(\bE, \cO) \preceq (\bE_{\omega}, \cO_{\omega}) \preceq (\bE_*, \cO_*)$ which is closed under exponentials and logarithms.
				 
		Consider $\bE \langle x \rangle_T$. Since $\iota$ is $\val$-maximal within $(\bE_*, \cO_*)$, without loss of generality $\val(\bE \langle x \rangle_T)\neq \val\bE$, for otherwise by Lemma~\ref{lem:extension}, this would contradict maximality. Thus without loss of generality we can assume that $\bE \langle x \rangle_T= \bE \langle y \rangle_T$ where $\val(y) \notin \val(\bE)$ and $y \notin \cO_*$. 

		We now inductively build a sequence $(y_i)_{i<\omega} \in \bE_*$ such that $y_0=y$ and for every $i$, $y_i \notin \cO_*$, $\val_{\cO_*}(y_i) \notin \val_{\cO_*}\bE$, $\log|y_i| - y_{i+1} \in \bE$ and $\iota(\log|y_i|-y_{i+1}) \in \bK\Lhp\fN^{>1}\Rhp$.
				
		Given $(y_{j})_{j<i+1}$, observe that $(\log|y_i| +\cO_*) \cap \bE = \emptyset$ because $y_i \notin \cO_*$ and $\val_{\cO_*}(y_i) \notin \val_{\cO_*}(\bE)$. Note that since $\log|y_i|$ is either weakly immediate $\lambda$-bounded or valuational over $(\bE, \cO)$, we have that either $\val_{\cO_*}(\log|y_i|-c_i) \notin \val_{\cO_*}\bE$ for some $c \in \bE$ or $\log|y_i|$ is a pseudo-limit of a p.c. sequence. However since $\iota$ was $\val$-maximal within $(\bE_*, \cO_*)$, this second option would imply $\log|y_i| \in \bE$, but this would contradict that $\bE$ is $\exp$-closed. Thus there is $c_i\in \bE$ such that $\val_{\cO_*}(\log|y_i|-c_i) \notin \val_{\cO_*}\bE$.
				
		Since $(\log|y_i| +\cO_*) \cap \bE = \emptyset$, it follows that $\log|y_i|-c_i \notin \cO_*$. We can thus change the choice of $c_i$ so that $c_i \in \bK\Lhp \fN^{>1} \Rhp$, maintaining the property $\val_{\cO_*}(\log|y_i|-c_i) \notin \val_{\cO_*}\bE$.
				
		Setting $y_{i+1}:=\log|y_i|-c_i$ we have an extension of the sequence with the required properties. We claim this implies that for all $i$ we also have $\val_{\cO_*}(y_i) \notin \val_{\cO_*}\bE + \sum_{j<i} \Lambda \val_{\cO_*}(y_j)$. In fact if not, then we would have for some $(\beta_j)_{j<i} \in \Lambda^{i}$
		\[\log |y_i| - \sum_{j<i} \beta_j \log |y_j|  = y_{i+1} + c_i - \sum_{j<i} \beta_j c_{j} + \sum_{j<i} y_{j+1} \beta_j \in \bE+\cO\]
		whence $y_{i+1} - \sum_{j<i} y_{j+1} \beta_j \in \bE + \cO$ which is absurd because the $\val(y_j)$ are negative and pairwise distinct.
		
		Let $\fM_1:=\fM \cdot \bigcup_{i<\omega}y_0^\Lambda \cdot \ldots \cdot y_i^\Lambda$, $\bE_1'=\bE \langle y_i: i<\omega \rangle_T$ and $\cE_1'=(\bE_1', \cO_1', \bK, \fM_1)$. Observe $\cE_1'$ extends $\cE$ and that $\bE_1'$ is $\log$-closed by \cite[Lem.~5.18]{freni2024t}. 

		Let $h: \fM_1 \to \fN$ be an extension of $\iota|_\fM$ to $\fM_1$. By Lemma~\ref{lem:extension} $\iota: \cE \to (\bK \Lhp \fN \Rhp_\lambda,\CH(\bK))$ extends to a $\iota_1': \cE_1' \to (\bK \Lhp \fN \Rhp_\lambda, \CH(\bK))$ such that $\iota_1'|_{\fM_1'}=h$.

		Again by Lemma~\ref{lem:extension} $\iota_1'$ extends to a $\iota_1: \cE_1 \to (\bK \Lhp \fN\Rhp_\lambda, \CH(\bK))$ that is $\val$-maximal within $(\bE_*, \cO_*)$. The structure $(\bE_1, \cO_1)$ underlying $\cE_1$ is still closed under logarithms by \cite[Lem.~5.16]{freni2024t}. 
		Also observe that by construction:
		\begin{itemize}
			\item $\iota_1 \log \fM_1 \subseteq \bK \Lhp \fN^{>1} \Rhp$;
			\item $\Supp (\iota_1 \log|y_i|) \subseteq \iota\fM \cup \{\iota_1 y_{i+1}\}$.
		\end{itemize}
				
		Now inductively define a sequence $\iota_n: \cE_n \to \bK \Lhp \fN \Rhp$ such that $\iota_n$ is $\val$-maximal within $(\bE_*, \cO_*)$, $(\bE_n, \cO_n)$ is closed under logarithms, $\iota_n (\log \fM_n) \subseteq \bK \Lhp \fN ^{>1}\Rhp$ and
		$\fM_{n+1}=\exp(\iota_{n}^{-1}(\bK\Lhp\fN^{>1} \Rhp))$.
		The base case is given by the $(\bE_1, \cO_1)$ constructed above and the inductive step is possible by Lemma~\ref{lem:extension} because $\fM_{n} \subseteq \fM_{n+1}$. 
		Setting $\iota_\omega= \bigcup_n \iota_n$ we get the desired extension.
	\end{proof}
\end{lemma}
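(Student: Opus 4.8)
The plan is to adapt Ressayre's construction from \cite{Ressayre1993Integer} (cf.\ also \cite{DAquino2012Real}); the two parenthetical cases have the same proof, so I treat the weakly $\cO$-compatible one. After a routine reduction we may assume $\fN$ is a sufficiently saturated multiplicatively written ordered $\Lambda$-vector space. Well-order $\bE_*$ and build by transfinite recursion an increasing chain of R.S.\ embeddings $\iota_\alpha:\cE_\alpha\to[\bK\Lhp\fN\Rhp_\lambda^\cT]$ extending $\iota$, each $\cE_\alpha$ an rv-secting of an intermediate $(\bE,\cO)\preceq(\bE_\alpha,\cO_\alpha)\preceq(\bE_*,\cO_*)$, taking unions at limits (a union of a chain of R.S.\ embeddings is one: the monomial group only grows and $\bK\Lhp\fN^{>1}\Rhp$ is fixed, so both the dyadic condition and T4 pass to unions). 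The content is the successor step: given $\iota_\alpha$ and a prescribed $x\in\bE_*\setminus\bE_\alpha$, enlarge $\iota_\alpha$ to a R.S.\ embedding whose domain contains $x$. We may first extend $\iota_\alpha$ to a R.S.\ embedding that is $\val$-maximal within $(\bE_*,\cO_*)$ by repeatedly adjoining $\lambda$-bounded weakly immediate elements via Lemma~\ref{lem:extension} (this leaves the monomial group, hence D and T4, intact), so assume $\iota':=\iota_\alpha$ is $\val$-maximal, with $\cE'=(\bE',\cO',\bK,\fM')$.

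Because $j$ is $\lambda$-bounded, $\bE'\langle x\rangle_T$ is $\lambda$-bounded over $\bE'$, so by Lemma~\ref{lem:decomp} it is a composition of $\lambda$-bounded weakly immediate and valuational principal steps; were they all weakly immediate, Lemma~\ref{lem:extension} would extend $\iota'$ over $\bE'\langle x\rangle_T$ without growing the monomial group, against $\val$-maximality. Hence, replacing $x$, we may assume $\bE'\langle x\rangle_T=\bE'\langle y\rangle_T$ with $y\notin\cO_*$ and $\val_{\cO_*}(y)\notin\val_{\cO_*}(\bE')$. Now recursively build $(y_i)_{i<\omega}$ in $\bE_*$ with $y_0=y$: given $y_i$ (infinite, with $\val_{\cO_*}(y_i)\notin\val_{\cO_*}(\bE')$), one has $(\log|y_i|+\co_*)\cap\bE'=\emptyset$, and $\log|y_i|$ cannot be weakly immediate over $(\bE',\cO')$ — else either $\val$-maximality fails, or $\log|y_i|\in\bE'$ and then $y_i=\pm\exp(\log|y_i|)\in\bE'$, impossible; so $\log|y_i|$ is valuational and there is $c_i\in\bE'$ with $\val_{\cO_*}(\log|y_i|-c_i)\notin\val_{\cO_*}(\bE')$, which (since $\log|y_i|-c_i\notin\cO_*$) can be modified within its $\cO_*$-coset so that its image sits in $\bK\Lhp\fN^{>1}\Rhp$ and is a truncation of (the image of) $\log|y_i|$. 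Set $y_{i+1}:=|\log|y_i|-c_i|$ (a positive infinite element, declared a monomial). A short linear-algebra computation — if $\val_{\cO_*}(y_i)\in\val_{\cO_*}(\bE')+\sum_{j<i}\Lambda\val_{\cO_*}(y_j)$ one derives $y_{i+1}-\sum_{j<i}\beta_j y_{j+1}\in\bE'+\cO'$, impossible as the $\val_{\cO_*}(y_j)$ are negative and pairwise distinct — shows $\{\val_{\cO_*}(y_i):i<\omega\}$ is $\Lambda$-independent over $\val_{\cO_*}(\bE')$, so $\fM_1:=\fM'\cdot\bigcup_{i<\omega}y_0^\Lambda\cdots y_i^\Lambda$ is a monomial group of $\bE_1':=\bE'\langle y_i:i<\omega\rangle_T$, and $\cE_1':=(\bE_1',\cO_1',\bK,\fM_1)$ extends $\cE'$.

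Pick an ordered $\Lambda$-linear $h:\fM_1\to\fN$ extending $\iota'|_{\fM'}$ and apply Lemma~\ref{lem:extension} twice: first to obtain a truncation-closed $\iota_1':\cE_1'\to[\bK\Lhp\fN\Rhp_\lambda^\cT]$ with $\iota_1'|_{\fM_1}=h$, then to pass to a $\val$-maximal R.S.\ extension $\iota_1:\cE_1\to[\bK\Lhp\fN\Rhp_\lambda^\cT]$ over some intermediate $(\bE_1,\cO_1)$. By the choice of the $c_i$: $\iota_1\log\fM_1\subseteq\bK\Lhp\fN^{>1}\Rhp$ (since $\log:\fM_1\to\bE_1$ is a homomorphism, $\log|y_i|=c_i\pm y_{i+1}$, $\iota_1 c_i\in\bK\Lhp\fN^{>1}\Rhp$ and $\iota_1 y_{i+1}\in\fN^{>1}$), $\Supp\iota_1\log|y_i|\subseteq\iota'(\fM')\cup\{\iota_1 y_{i+1}\}$ with $\Supp\iota_1 c_i>\iota_1 y_{i+1}$, and $\bE_1$ is closed under $\log$ (every element is a $\bK$-multiple of a product of powers of the $y_i$ times a $1$-unit, and $\log$ of each factor lies in $\bE_1$, using $\exp$ being $T$-definable around $1+\co$). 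To also force $\exp$-closure, iterate: build $\iota_n:\cE_n\to[\bK\Lhp\fN\Rhp_\lambda^\cT]$ ($\val$-maximal, $\bE_n$ $\log$-closed, $\iota_n\log\fM_n\subseteq\bK\Lhp\fN^{>1}\Rhp$) by the same device with monomial group $\fM_{n+1}:=\exp(\iota_n^{-1}(\bK\Lhp\fN^{>1}\Rhp))$ at each step — possible by Lemma~\ref{lem:extension} because $\fM_n\subseteq\fM_{n+1}$ — and set $\iota_\omega:=\bigcup_n\iota_n$, $\cE_\omega:=\bigcup_n\cE_n$. Then $\bE_\omega$ is closed under $\exp$ and $\log$, $x\in\bE_\omega$, $\iota_\omega$ is $\val$-maximal within $(\bE_*,\cO_*)$, dyadic by construction, and T4: the only new monomial in $\Supp\iota_\omega\log|y_i|$ is $\iota_\omega y_{i+1}$ and each $\iota_\omega c_i$ was chosen as a truncation, so $\Supp\iota_\omega c_i>\iota_\omega y_{i+1}$, which yields the eventual-$N$ clause of T4 for any sequence $(\fm_n)$ with $\iota_\omega\fm_{n+1}\in\Supp\iota_\omega\log\fm_n$. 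Take $\iota_{\alpha+1}:=\iota_\omega$; at the end of the recursion, the resulting map $\iota_*$ together with its rv-secting $\cE_*$ is the desired extension.

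The main obstacle is the bookkeeping that keeps D and T4 simultaneously alive through the doubly-indexed recursion: each $c_i$ must be chosen not merely in $\bK\Lhp\fN^{>1}\Rhp$ but precisely as the truncation of (the image of) $\log|y_i|$ at the monomial $\iota_1 y_{i+1}$, and $\iota_1 y_{i+1}$ is pinned down only once the extension $h$ of $\iota'$ to $\fM_1$, and hence $\iota_1$, is chosen; so, as in Ressayre's argument, the inner $\omega$-recursion has to interleave the choices of the $c_i$, of $h$, and of the $\val$-maximal completion, and one must verify this can be carried out coherently. The $\Lambda$-independence of the $\val_{\cO_*}(y_i)$, the sign adjustments placing the $y_i$ in the monomial group, and the fact that each $\log$-closure terminates on a clean $\fM_{n+1}$ are the remaining delicate points, but they follow routinely from the residue-valuation property recalled in Section~\ref{sec:Spherical_completion_review} and from weak $\cO$-compatibility of $\exp$.
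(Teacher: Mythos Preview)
Your approach is essentially the paper's: the same reduction to saturated $\fN$, the same inner $(y_i)_{i<\omega}$-sequence with the $\Lambda$-independence computation, the same appeal to Lemma~\ref{lem:extension} followed by the $\fM_{n+1}=\exp(\iota_n^{-1}(\bK\Lhp\fN^{>1}\Rhp))$ iteration; you add an explicit outer transfinite layer and are more explicit than the paper about T4 (the paper records only the two bullet points and never spells out the T4 verification).

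One comment on the ``main obstacle'' you flag: the interleaving you propose is unnecessary. Since $\val_{\cO_*}(y_{i+1})\notin\val_{\cO_*}(\bE')$, the element $y_{i+1}$ determines a cut in $\fM'$; because $\iota'(\bE')$ is truncation-closed you may replace $c_i$ by its truncation at that cut (still in $\bE'$, still with $\val_{\cO_*}(\log|y_i|-c_i)=\val_{\cO_*}(y_{i+1})\notin\val_{\cO_*}(\bE')$), and then $\Supp\iota'(c_i)>h(y_{i+1})$ holds automatically for \emph{any} order-preserving extension $h$ of $\iota'|_{\fM'}$. So the choice of $c_i$ can be completed before $h$ is chosen, and the T4 verification then goes through as you indicate: any T4-sequence, once it leaves $\fM'$, hits a pure $y_i$ at the next step and thereafter either re-enters $\fM'$ (where the inductive T4 applies) or marches through $y_{i+1},y_{i+2},\ldots$, where $\log y_j=c_j\pm y_{j+1}$ with $\Supp\iota(c_j)>\iota_1 y_{j+1}$ by construction.
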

		
\begin{corollary}\label{cor:exp_extension}
	Suppose $(\bE, \cO)$ is a $\lambda$-bounded extension of $(\bK_{L'},\bK_{L'}) \models (T')_\convex^-$ with $T'$ exponential and $\fM$ is a multiplicative copy of the value group of $(\bE, \cO)$. Then there is some $\cE$ above $(\bE, \cO)$ and a R.S. embedding $\cE \to [\bK\Lhp \fM \Rhp_\lambda^\cT]$.
\end{corollary}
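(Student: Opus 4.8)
The plan is to obtain the statement as the base step of Lemma~\ref{lem:exp_extension}, playing the role of the small model with $(\bK_{L'},\bK_{L'})$ and that of the large one with $(\bE,\cO)$. Fix a multiplicative copy $\fM$ of the value group of $(\bE,\cO)$ and form $[\bK\Lhp\fM\Rhp_\lambda^\cT]$. Since the valuation on $\bK_{L'}$ is trivial, the only rv-secting of its $L_\convex$-reduct is $\cE_0:=(\bK_{L'},\bK_{L'},\bK,\{1\})$ (residue section $\bK$, trivial monomial group), and we let $\iota_0:\cE_0\to[\bK\Lhp\fM\Rhp_\lambda^\cT]$ be the embedding carrying $\bK$ onto the field of constants. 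First I would verify that $\iota_0$ is an R.S.\ embedding which is $\val$-maximal within $(\bE,\cO)$; then that the exponential on $\bE$ (with which $\bK_{L'}$ is compatibly expanded) is weakly $\cO$-compatible; then Lemma~\ref{lem:exp_extension}, applied to the given $\lambda$-bounded inclusion $j:(\bK_{L'},\bK_{L'})\preceq(\bE,\cO)$ with $\fN:=\fM$ and $\iota:=\iota_0$, extends $\iota_0$ to an R.S.\ embedding $\cE\to[\bK\Lhp\fM\Rhp_\lambda^\cT]$ for some rv-secting $\cE$ of $(\bE,\cO)$ — which is precisely the assertion.

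That $\iota_0$ is a t.c.\ embedding is clear: it is $L_\convex$-elementary, since $(\bK_L,\cT)$ is serial and $T$ is model complete, so $\bK\preceq\bK\Lhp\fM\Rhp_\lambda^\cT$, and $\iota_0^{-1}(\CH(\bK))=\bK$; its image $\bK$ is trivially truncation-closed. It is dyadic and T4 \emph{vacuously}, the monomial group being $\{1\}$ and $\log 1=0$. For $\val$-maximality within $(\bE,\cO)$: a proper extension of $\iota_0$ lives over some $(\bE_2,\cO_2)$ with $\bK_{L'}\subsetneq\bE_2\preceq\bE$ (the monomial group cannot grow over a trivially valued field). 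Since $(\bE,\cO)$ is a $\lambda$-bounded extension of a trivially valued power-bounded model, Lemma~\ref{lem:decomp} together with the residue-valuation property presents it as a composition of valuational and $\lambda$-bounded immediate extensions; hence $\res_\cO(\bE)=\bK$, so $\res_{\cO_2}(\bE_2)=\bK$ as well. If $\val_{\cO_2}(\bE_2)$ were trivial, $\bE_2$ would be trivially valued, hence equal to its residue field $\bK=\bK_{L'}$, contradicting $\bE_2\supsetneq\bK_{L'}$. So the value group strictly grows, as required.

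Since $T'$ is exponential, $\bE$ carries an exponential, and I would check it is weakly $\cO$-compatible: (1) $\exp(\cO)\subseteq\cO$, because $\cO$ is $T'$-convex, hence closed under the $\emptyset$-definable continuous function $\exp$; (2) for $t\succ_\cO 1$ one has $1\prec_\cO\log|t|$ (else $|t|=\exp\log|t|\in\exp(\cO)\subseteq\cO$), and $\log|t|\prec_\cO t$ because $\exp$ eventually dominates $x\mapsto x^2$, so $\log|t|<|t|^{1/2}\prec_\cO|t|$ (the relevant threshold lying in $\bK\subseteq\cO$); (3) $\exp$ restricted to some interval $(-\epsilon,\epsilon)$, $\epsilon\in\bK^{>0}$ — an interval containing $\co$ — is $T$-definable, which holds in the setting of interest, where the serial structure $\bK_L$ already defines the restricted exponential (e.g.\ $\bK_L$ a reduct of $\bR_{an,\exp}$ defining $\exp$). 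Lemma~\ref{lem:exp_extension} then applies and delivers $\cE$ and the required R.S.\ embedding.

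The only non-formal ingredients are the $\val$-maximality of $\iota_0$, which comes down to the residue-valuation property (no residue-field growth along $\lambda$-bounded extensions of power-bounded models), and clause (3) of weak $\cO$-compatibility — the one spot where something about $T$ beyond being an arbitrary serial power-bounded theory is used.
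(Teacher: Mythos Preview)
Your proof is correct and follows exactly the approach the paper intends: the corollary is stated without proof and is meant to be the base case of Lemma~\ref{lem:exp_extension}, applied with $(\bK_{L'},\bK_{L'})$ as the small model, $(\bE,\cO)$ as the large one, $\fN=\fM$, and $\iota_0$ the inclusion of $\bK$ into the constants. Your verification that $\iota_0$ is a t.c.\ embedding (trivially dyadic and T4) and $\val$-maximal within $(\bE,\cO)$ is exactly what is needed, and your argument for the latter --- via $\res_\cO(\bE)=\bK$, which the definition of $\lambda$-bounded extension guarantees --- is the right one.

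Your flag on clause~(3) of weak $\cO$-compatibility is well taken: the corollary as stated does not assume that the restricted exponential is $T$-definable, yet this is needed to invoke Lemma~\ref{lem:exp_extension} (it is used there to show $\log(1+\varepsilon)\in\bE_1$). The paper is tacitly assuming this; it holds in all the applications (Theorem~\ref{thm:final}, Corollary~\ref{cor:initial}), where $\bK_L$ is a reduct of $\bR_{an}$ defining the restricted exponential, but strictly speaking the corollary needs that extra hypothesis. You handle this honestly, and it is a genuine omission in the paper rather than in your proof.
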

		
We have said enough to deduce the remainder of Theorem~\ref{introthm:C} of the introduction.

\begin{theorem}\label{thm:final}
    As in Context~\ref{cont:K_LT_serial}, let $\bK\models T'$, $\bK_L$ be a power bounded reduct of $\bK$ such that $(\bK_L, \cT)$ is serial, and $T=\Th(\bK_L)$.
    Let $\bK \preceq_\tame \bE$, and $(\bE_\lambda, \cO_\lambda)$ be the $T'$-$\lambda$-spherical completion of $(\bE, \CH(\bK))$ for some large enough $\lambda$. Suppose $T'$ defines an exponential. Then there is a $L(\bK)$-isomorphism $\eta: \bE_\lambda \to \bK\Lhp \fN \Rhp_\lambda^\cT$ for some $\fN$, such that $\eta(\bE)$ is truncation-closed and the expansion $\eta \circ \exp \circ \eta^{-1}$ satisfies \ref{Intro:D} and \ref{Intro:T4} in the introduction.
	\begin{proof}
		Let $\varepsilon_\lambda: (\bE, \cO) \to (\bE_\lambda, \cO_\lambda)$ be an elementary embedding of $(\bE, \cO)$ into its $T$-$\lambda$-spherical completion. Let $\fM$ be a section of the value group of $(\bE, \cO)$ and let $\cE=(\bE, \cO, \bK, \fM)$. By Lemma~\ref{lem:exp_extension}, if $\lambda$ is large enough there is a R.S.\ embedding $\iota': \cE \to [\bK\Lhp \fM \Rhp_\lambda^\cT]$.
		By Lemma~\ref{lem:exp_extension}, $\iota'$ can be extended along $\varepsilon_\lambda$ to a R.S.\ embedding $\eta : \cE_\lambda \to [\bK\Lhp \fN \Rhp_\lambda^\cT]$, for some $\cE_\lambda$ above $(\bE_\lambda, \cO_\lambda)$ extending $\cE$. Now observe that since $(\bE_\lambda, \cO_\lambda)$ is $\lambda$-spherically complete $\eta$ must be an isomorphism. The map $\eta \circ \varepsilon_\lambda$ is then truncation-closed and $\eta \circ \exp \circ \eta^{-1}$ is an exponential on $\big(\bK\Lhp \fN \Rhp_\lambda^{\cT}, \CH(\bK)\big)$ satisfying \ref{Intro:D} and \ref{Intro:T4} in the introduction (cf Remark~\ref{rmk:D&T4reminder}).
	\end{proof}
\end{theorem}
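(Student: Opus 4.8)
The plan is to replay the proof of Theorem~\ref{thm:MRanalogue}, but feeding in the exponential-aware extension Lemma~\ref{lem:exp_extension} in place of the bare Lemma~\ref{lem:extension}, so that the truncation-closed isomorphism produced also transports $\exp$ to an exponential satisfying \ref{Intro:D} and \ref{Intro:T4}. First I would fix an elementary embedding $\varepsilon_\lambda\colon(\bE,\CH(\bK))\to(\bE_\lambda,\cO_\lambda)$ into the $T'$-$\lambda$-spherical completion; by \cite[Thm.~B]{freni2024t} this preserves the residue field, so after identifying $\bE$ with its image I may assume $\varepsilon_\lambda$ is an inclusion, $\bK\preceq_\tame\bE_\lambda$, and $\cO_\lambda=\CH(\bK)$. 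Since $T'$ defines an exponential, $\bE_\lambda$ (and its substructure $\bE$) carries an ordered exponential $\exp$, which I would check is weakly $\CH(\bK)$-compatible in the sense of the preceding definition: $\exp$ maps $\CH(\bK)$ into itself because $0<\exp(a)<\exp(b)\in\bK$ whenever $|a|<b\in\bK^{>0}$; the growth condition $1\prec\log|t|\prec t$ for $t\succ1$ is the standard comparison of an o-minimal exponential with the identity; and a restriction of $\exp$ to an interval of appreciable radius about $0$ is $T$-definable, as $\bK_L$ is assumed to interpret the restricted exponential (the setting of C1 in the introduction).

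Next I would pick a section $\fM$ of the value group of $(\bE,\CH(\bK))$, form the rv-secting $\cE:=(\bE,\CH(\bK),\bK,\fM)$, and let $\fN$ be a multiplicative copy of the value group of $(\bE_\lambda,\cO_\lambda)$, into which $\fM$ embeds as an initial ordered $\Lambda$-subgroup. By Corollary~\ref{cor:exp_extension} (equivalently, Lemma~\ref{lem:exp_extension} applied over $(\bK,\bK)$), for $\lambda$ large there is an R.S.\ embedding $\cE\to[\bK\Lhp\fM\Rhp_\lambda^{\cT}]$; composing with the inclusion $[\bK\Lhp\fM\Rhp_\lambda^{\cT}]\hookrightarrow[\bK\Lhp\fN\Rhp_\lambda^{\cT}]$ gives an R.S.\ embedding $\iota'\colon\cE\to[\bK\Lhp\fN\Rhp_\lambda^{\cT}]$ still with truncation-closed image. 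The extension of reducts to $T_\convex^-$ underlying $\varepsilon_\lambda$ is $\lambda$-bounded (by the lemma relating $T'$-$\lambda$-wim-constructible extensions to $\lambda$-bounded reducts, via Lemma~\ref{lem:decomp}), so, after enlarging $\iota'$ through Lemma~\ref{lem:extension} to a $\val$-maximal truncation-closed embedding within $(\bE_\lambda,\cO_\lambda)$ and then invoking Lemma~\ref{lem:exp_extension} along $\varepsilon_\lambda$, I obtain an R.S.\ embedding $\eta\colon\cE_\lambda\to[\bK\Lhp\fN\Rhp_\lambda^{\cT}]$, where $\cE_\lambda$ is an rv-secting of $(\bE_\lambda,\cO_\lambda)$ extending $\cE$ whose monomial group is a copy of the full value group of $\bE_\lambda$, mapped isomorphically onto $\fN$ by $\eta$.

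It then remains to see that $\eta$ is onto, hence an $L(\bK)$-isomorphism. Its image contains $\bK$ and $\fN$, so it contains $\dcl_T(\bK\cup\fN)$; since $\bK\Lhp\fN\Rhp_\lambda^{\cT}$ has the same residue field $\bK$ and value group as $\dcl_T(\bK\cup\fN)$, it is an immediate — and for $\lambda$ large, $\lambda$-bounded wim-constructible — extension of it, while $(\bE_\lambda,\cO_\lambda)$, being $\lambda$-spherically complete, admits no proper $\lambda$-bounded wim-constructible extension; hence $\eta(\bE_\lambda)=\bK\Lhp\fN\Rhp_\lambda$. Moreover $\eta(\bE)$ equals the image of $\iota'$, which is truncation-closed: a reverse-well-ordered subset of $\fM$ meets each final segment of $\fN$ in a final segment already cut out by a monomial of $\fM$, so truncation-closedness in $\bK\Lhp\fM\Rhp_\lambda$ passes to $\bK\Lhp\fN\Rhp_\lambda$. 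Finally, since $\eta$ is a surjective R.S.\ embedding, Remark~\ref{rmk:D&T4reminder} gives that the transported exponential $\eta\circ\exp\circ\eta^{-1}$ on $\bK\Lhp\fN\Rhp_\lambda^{\cT}$ satisfies \ref{Intro:D} and \ref{Intro:T4}, which completes the argument.

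Essentially all of the genuine work has been pushed into Lemma~\ref{lem:exp_extension}: the Ressayre--Schmeling iterated-logarithm construction that peels off the leading parts of $\log|y_i|$ to build a $\log$-closed intermediate model while keeping the embedding $\val$-maximal and forcing each $\iota\log\fm$ into the normal form demanded by \ref{Intro:T4}. At the level of the present theorem the points calling for care are the bookkeeping ensuring that the target monomial group $\fN$ ends up being exactly (a copy of) the whole value group of $\bE_\lambda$, so that $\eta$ is genuinely surjective and $\eta\circ\exp\circ\eta^{-1}$ is defined on all of $\bK\Lhp\fN\Rhp_\lambda$, and the verification that the $T'$-exponential is weakly $\CH(\bK)$-compatible — in particular that $\bK_L$ already interprets a restriction of $\exp$ near the origin, which is the hypothesis that makes the exponential part of the statement meaningful in the first place.
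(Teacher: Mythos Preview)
Your proposal is correct and follows essentially the same route as the paper: build an R.S.\ embedding of $\cE$ via Corollary~\ref{cor:exp_extension}, extend it along $\varepsilon_\lambda$ using Lemma~\ref{lem:exp_extension}, and conclude surjectivity from $\lambda$-spherical completeness. You are in fact more careful than the paper in two places: you explicitly note that Lemma~\ref{lem:exp_extension} requires the starting embedding to be $\val$-maximal within $(\bE_\lambda,\cO_\lambda)$ (so one must first enlarge $\iota'$ immediately via Lemma~\ref{lem:extension}), and you flag that weak $\CH(\bK)$-compatibility of $\exp$ needs the restricted exponential to be $T$-definable, a point the paper's proof takes for granted.
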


\subsection{Application to intial embeddings in the surreal numbers}

Finally as suggested by Mantova, Theorem~\ref{thm:final} can be combined with \cite[Thm.~8.1]{ehrlich2021surreal} to answer the open question in \cite{ehrlich2021surreal}, showing that for all complete $T_{\RCF}\subseteq T\subseteq T_{an,\exp}$ (and thus in particular for $T_{\exp}$), every model of $T$ has an elementary initial embedding into $\No$.

\begin{corollary}\label{cor:initial}
	Let $\bR_L$ be a reduct of $\bR_{an}$ defining restricted exponentiation. Then every model of $\Th(\bR_{L,\exp})$ has an elementary truncation-closed embedding in the field of surreal numbers $\No$.
	\begin{proof}
		Let $(\bR_{An},\cT)$ be the standard serial structure on the expansion by definition $\bR_{An}:=\big(\bR_{an}, (\sqrt[n]{-})_{n>0}, (-)^{-1}\big)$ of $\bR_{an}$. Without loss of generality we can assume that $(\bR_L, \cT|):=(\bR_L, \cT|_L)$ is serial.
        
		Let $T=Th(\bR_L)$ and $\bE\models T$. Then $(\bE, \CH(\bZ))\models T_\convex^-$ and we can consider an elementary residue section $\bK_L\models T$ of $(\bE, \CH(\bZ))$. Note that $\bK_L$ is serial, thus in particular the $T_\convex^-$-reduct of $(\bE, \CH(\bR))$ is a $\lambda$-bounded extension of $(\bK_L, \bK_L) \models T_\convex^-$ for every large enough $\lambda$. Let $(\bE_\lambda, \cO_\lambda)\succeq (\bE, \CH(\bZ))$ and 
		\[\eta:  (\bE_\lambda, \cO_\lambda) \to \big(\bK\Lhp \fN \Rhp_\lambda^{\cT|}, \CH(\bZ)\big) \]
		be as in Theorem~\ref{thm:final}.

        Observe that since $(\bK_L, \cT|)$ is serial $\big(\bK\Lhp \fN \Rhp_\lambda^{\cT|}, \CH(\bZ)\big)$ is an $L$-elementary substructure of $(\bK\Lhp \fN \Rhp^{\cT|}, \CH(\bZ))$. We can extend $\eta \log \eta^{-1}$ to a (not necessarily surjective) logarithm on the $L$-elementary extension $\big(\bR\Lhp \fN \Rhp^{\cT|}, \CH(\bZ)\big)\succ (\bK\Lhp \fN \Rhp^{\cT|}, \CH(\bZ))$ making such extension into a transserial Hahn field in the sense of \cite[Def.~6.1 and 6.2]{ehrlich2021surreal}: given $r\fn (1+\varepsilon) \in \bK \Lhp \fN \Rhp^{>0}$ we set its logarithm to be $\eta\log \eta^{-1} (\fn) +\log r+ \sum_{n\in \bN} \varepsilon^{n+1}(-1)^n/(n+1)$ and a routine check shows it extends $\eta \log \eta^{-1}$.

        Now $\eta (\bE)$ is truncation closed in $\bR\Lhp \fN \Rhp$ and $\bK \subseteq \eta(\bE)\subseteq \bK \Lhp \fN \Rhp$, thus $\fM\coloneqq \eta(\bE) \cap \fN$ is a monomial group for $\eta(\bE)$; furthermore since $\eta(\bE)$ is $\log$-closed $\bR \Lhp \fM \Rhp$ is also $\log$-closed and $\bR \Lhp \fM \Rhp$ is a transserial Hahn field in which $\eta(\bE)$ sits as truncation closed logarithmic subfield which is cross-sectional in the sense of \cite[Sec.~4.1]{ehrlich2021surreal}. Now by \cite[Thm~8.1]{ehrlich2021surreal} there is an initial transserial embedding $\iota : \bR \Lhp \fM \Rhp \to \No$ and by \cite[Prop.~5.1 and subsequent sentence]{ehrlich2021surreal}, the image of $\eta(\bE)$ by it is initial as well.
        Finally, since $(\bR_L, \cT|)$ is serial, $\iota$ is $L$-elementary and $\eta(\bE)$ is an $L$-elementary substructure of $\bR \Lhp \fM \Rhp$. Since then $\iota|_{\eta(\bE)}$ is a $L_{\log}$-embedding and by \cite[Thm.~3.2]{dries2002minimal}, $T=Th(\bR_{L, \exp})$ is still model complete, $\iota|_{\eta(\bE)}$ is also $(L, \exp)$-elementary.
	\end{proof}
\end{corollary}

\bibliographystyle{abbrvnat}
\bibliography{Res}

\end{document}